\documentclass[11pt]{article}

\usepackage{mathrsfs}

\usepackage{latexsym}
\usepackage{bbm}
\usepackage{amsmath}
\usepackage{amsfonts}
\usepackage{amssymb}
\usepackage{multirow}
\setcounter{page}{1}
\usepackage{latexsym}
\usepackage[dvips]{graphicx}
\usepackage{overpic}
\usepackage{graphicx}
\setlength{\topmargin}{0in} \setlength{\oddsidemargin}{0cm}
\setlength{\textheight}{22.38cm} \setlength{\textwidth}{16cm}
\newtheorem{theorem}{\bf Theorem}[section]
\newtheorem{lemma}[theorem]{\bf Lemma}

\newtheorem{remark}{{\bf Remark}}[section]
\newenvironment{proof}{\noindent{\em Proof.}}{\quad \hfill$\Box$\vspace{2ex}}


\def \and {\, \mbox{\rm and}\, }

\def \supp {\,{\rm supp}\,}

\makeatletter

\newcommand{\Rmnum}[1]{\expandafter\@slowromancap\romannumeral #1@}
\makeatother
\begin{document}

\title{\bf  Sharp variational inequalities   for average operators over finite type curves in the plane  }
\author{Xudong Nie\thanks{Department of Mathematics
and Physics, Shijiazhuang Tiedao University, Shijiazhuang 050043, P.R. China. E-mail address:
{\it nxd2016@stdu.edu.cn}.}
}
\date{}
\maketitle

\begin{abstract}
The aim of this article is to establish the $L^p(\mathbb{R}^2)$-boundedness of the
variational operator associated with  averaging operators defined  over finite type curves in the plane.
Additionally, we present  the necessary conditions for the boundedness of these operators in $L^p$.
Furthermore,  to prove one of these results, we establish a mixed-norm local smoothing estimate from
$L^4$ to $L^4(L^2)$ corresponding to a family of Fourier integral operators that do not uniformly satisfy the cinematic curvature condition.
\end{abstract}

{\bf Key words:} variational operator,  local smoothing estimate, Fourier integral operators, finite type curve

AMS Subject Classification (2010) 42B20, 42B35

\vspace*{0.5mm}

\section{Introduction}

Suppose  $\overrightarrow{a}=\{a_t\}_{t>0}$ is a family of complex numbers.
The $q$-variation of $\overrightarrow{a}$ is defined as
$$
\|\overrightarrow{a}\|_{v_q}=\sup_{L\in\mathbb{N}}\sup_{0<t_1<t_2<\cdots<t_{L}<\infty}
\left(\sum_{i=1}^{L-1}\left|a_{t_{i+1}}-a_{t_i}\right|^q\right)^{\frac1q},
$$
 for $q\in[1,\infty)$ and replacing the $l^{q}$-sum by a sup when $q=\infty$.
 Let $v_q$ denote the space of functions on $(0,\infty)$ with finite $q$-variation norm $\|\cdot\|_{v_q}$ as defined above.
The variational inequality is an invaluable tool in pointwise convergence problems  because it eliminates the need to establish
pointwise convergence on a dense class, which can be challenging in many cases.
L\'{e}pingle \cite{1976-lepingle-first}  was the first to provide a variance inequality for martingale sequences.
Subsequently, Pisier and Xu \cite{1988-Pisger-xu} simplified the proof of L\'{e}pingle's inequality for martingale sequences.
In 1989, Bourgain \cite{1989-Bourgain-variation} utilized  L\'{e}pingle's findings to investigate the Birchoff ergodic average variance inequality and directly achieved pointwise convergence.
Since then, the issue of boundedness of variation in harmonic analysis, probability theory, and ergodic theory has gained significant attention;
also refer to the papers \cite{2020-GUO-variation}, \cite{Mirek-Stein-variation-bootstrap}, \cite{Mirek-Stein-variation-interpolation}, \cite{Mirek-Stein-variation-jump},
\cite{Mirek-Stein-variation-radontype} as well as their references.

Variational operators associated with families of  spheres is an important  topic in the theory of variations.
Let $\mathcal{A}^{sp}=\{A_t^{sp}\}_{t>0}$  be the family of spherical averaging operators
and define
$$
V_q(\mathcal{A}^{sp}f)(x):=\left\|\{\{A_t^{sp}f(x)\}_{t>0}\}\right\|_{v_q}
$$
where
\begin{equation}\label{exp-sphere-av}
A_t^{sp}f(x)=\int_{S^{d-1}}f(x-ty)d\sigma (y)
\end{equation}
with $d\sigma(y)$
denoting  the normalized  surface measure on the unit sphere
$S^{d-1}$.
A classical result established by Stein \cite{1976-Stein-maximal-sphere} ($d>2$) and Bourgain \cite{1986-Bourgain-maximal-plan} ($d=2$) informs us that the corresponding maximal operator  $M^{sh}f=\sup_{t>0}|A_t^{sh}f|$ is bounded on $L^p(\mathbb{R}^d)$ for $p>\frac{d}{d-1}$.
In their work \cite{2008-strongvariation},
Jone, Seeger, and Wright generalized this result and  demonstrated that
$V_q(\mathcal{A}^{sp})$ is bounded on $L^p(\mathbb{R}^d)$ for all $q>\max\{2,p/d\}$ if  $p>\frac{d}{d-1}$ and the $p$-range is sharp.
Furthermore, they also showed that the $L^p$-boundedness of $V_q(\mathcal{A}^{sp})$  fails when either $q=2$ or $q<p/d$ for $p>2d$.
Recently, Beltran , Oberlin and    Roncal, et al. \cite{2022-Beltran-variation-sphere} proved that, for dimensions greater than  2,
$V_q(\mathcal{A}^{sp})$  maps $L^{p,1}(\mathbb{R}^d)$ to $L^{p,\infty}(\mathbb{R}^d)$
for $q=p/d$ and $p>2d$.
For further information regarding generalization of the spherical maximal operator, readers can referred to the literature such as \cite{2024-liwenjuan-variantion},
\cite{2024-Liu-maxial-sphere}, \cite{2019-lacey-sparse-maximal}, along with their respective references.

The classical results on the spherical maximal operator have initiated  investigations into various
classes of maximal operators  associated with hypersurfaces where the Gaussian curvature
 is allowed to vanish.
Let $\eta\in C_{0}^{\infty}(S) $ be a smooth non-negative function with compact support on a smooth hypersurface $S\subset\mathbb{R}^d$.
Then the associated maximal operator is defined as
\begin{equation*}\label{exp-hyper}
M^{hyper}f(x)=\sup_{t>0}\left|A_t^{hyper}f(x)\right|,
\end{equation*}
where
\begin{equation*}\label{exp-hyper}
A_t^{hyper}f(x)=\int_{S}f(x-ty)\eta(y)d\sigma (y).
\end{equation*}
The boundedness index, denoted as $p(S)$, is determined for each hypersurface $S$ and a fixed density function $\eta(x)$. It represents the smallest value of $p$ for which the operator $M^{hyper}$ remains bounded in the space $L^p$. This index is closely linked to the geometric characteristics of the hypersurface. Previous research has successfully determined the precise value of $p(S)$ for various classes of hypersurfaces with values greater than or equal to 2. Furthermore, in \cite{2019-Buschen-part1,2019-Buschen-part2}, researchers have obtained the exact boundedness index for smooth hypersurfaces of finite type in three-dimensional Euclidean space that satisfy a natural transversality condition (i.e., no affine tangent plane passing through any point on $S$ intersects with the origin in $\mathbb{R}^3$) when $p(S)<2$.
The value of $p(S)$ is connected to the oscillation index of an oscillatory integral that arises from the Fourier transform of the surface measure $\eta dS$, as well as the contact index of the hypersurface. Relevant literature on this topic can be found in \cite{2010-Ikromov-muller,1997-Iosevich}, and further works are available in \cite{1985-Sogge-Stein} and \cite{1995-Sogge}.

While there has been notable progress in studying the boundedness of maximal operators linked to hypersurfaces, research on variational operators corresponding to these surfaces remains somewhat unsatisfactory. Taking inspiration from findings related to spherical variational operators, this paper aims to establish $L^p$-inequalities for variational operators associated with curves, even when the Gaussian curvatures at certain points are allowed to vanish within the plane.


\subsection{Statement of main theorems in the plane}

Consider a smooth curve denoted by $C:I\rightarrow\mathbb{R}^2$ in the plane.
Define $A_tf(x)=\int_{C}f(x-ty)d\sigma(y)$,
where $I$ represents a compact interval in $\mathbb{R}$ and
$d\sigma(y)$ represents the product of a cutoff function with Lebesgue measure on $C$.
We say that $C$ is finite type if $\langle(C(x)-C(\bar{x})),\vec{a}\rangle$
never  vanishes  infinitely  for any $\bar{x}\in I$ and any unite vector $\vec{a}$.
Additionally, suppose $C$ takes form $(x_1,\gamma(x_1))$ in a small neighborhood of $a_0$.
As linear transformation preserves $L^p$-boundedness for both maximal and variational operators,
we can choose $\gamma(x_1)$ such that $\gamma'(a_0)=0$.
If $\gamma^{(k)}(a_0)=0$ holds true for all $k$ from $1$ to $m-1$ while $\gamma^{(m)}(a_0)\neq0$,
then we classify $C$ as having finite type $m$ at $a_0$.

We need to consider two distinct categories of curves. Suppose that the tangent line at a point on the curve does not intersect with the origin. If the Gaussian curvature of $C$ is non-zero, then the corresponding maximal operator exhibits boundedness on $L^p$ for values of $p$ greater than 2, as mentioned in \cite{1986-Bourgain-maximal-plan}. In cases where the curvature may vanish at certain points and $C$ possesses finite type of at least $m$ at any given point, then the corresponding operator demonstrates boundedness on $L^p$ if and only if $p>m$, as discussed in \cite{1994-Iosevich}.

On the contrary, if the tangent line at a point on the curve passes through the origin, it is implicitly suggested in Iosevich's article \cite{1994-Iosevich} that the corresponding maximal operator is bounded on $L^p$ for $p>1$.
However, this issue becomes more complex when dealing with non-isotropic dilations.

 Without loss of generality, let us assume that $\gamma(x_1)=(x_1-a_0)^{m}\mathcal{O}(1+(x_1-a_0)^n)$,
if $1\leq n<\infty$, Li  \cite{LIwenjuan} demonstrated that the maximal operator
\begin{equation*}\label{liwen-maximal}
\mathcal{M}f(x)=\sup_{t>0}\left|\int_{C}f(x_1-ty_1,x_2-t^my_2)d\sigma(y)\right|
\end{equation*}
maps $L^p$ to $L^p$ if and only if $p>2$.
In case $a_0=0$ and $n=0$, Stein \cite{1976-stein-curves} proved that $\mathcal{M}$ is bounded on $L^p$ for $p>1$.

Given that maximal operators can be regarded as a specific instance of variational operators, it is pertinent to inquire whether these aforementioned results can be generalized to the corresponding variational operators. Herein, we present our principal finding in the two-dimensional plane.

Let $b\in C^{\infty}(I,\mathbb{R})$,
where $I$ is a bounded interval containing the origin, and
\begin{equation}  \label{condition-b}
b(0)\neq0;b^{(j)}(0)=0,j=1,2,\cdots,n-1;b^{(n)}(0)\neq0\,\,(n\geq1).
\end{equation}
Suppose $\phi(y)=a_0+y^mb(y)$, where $m\geq2$.
\begin{theorem}[isotropic dilations] \label{Th-isotropic}
Define the average operaters $\mathcal{A}=\{A_t\}_{t>0}$, where
 \begin{equation}
A_tf(x)=\int_{\mathbb{R}}f(x_1-ty,x_2-t\phi(y))\eta(y)dy,
\end{equation}
with $\eta(y)$  supported in a sufficiently small neighborhood of the origin.
Suppose  $b$ satisfies (\ref{condition-b}).

(i) If $a_0\neq0$,
then a priori estimates
\begin{equation}\label{Mainresult01}
\left\|V_q(\mathcal{A}f)\right\|_{L^p(\mathbb{R}^2)}  \leq C_{p,q} \|f\|_{L^p(\mathbb{R}^2)}
\end{equation}
holds whenever $q>\max\{m,p/2\}$ and $p>m$.
Conversely, if (\ref{Mainresult01}) holds, then we necessarily have $q\geq p/2$.

(ii) If $a_0=0$,
then (\ref{Mainresult01})
holds whenever $q>\max\{2,p/2\}$ and $p>1$.
Conversely, if (\ref{Mainresult01}) holds, then we necessarily have $q\geq p/2$.
\end{theorem}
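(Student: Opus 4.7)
The plan is to implement the standard long/short variation scheme combined with a Littlewood--Paley decomposition and frequency-localized local smoothing estimates; the main technical novelty is the mixed-norm $L^4\to L^4(L^2)$ bound announced in the abstract, which is needed precisely because the cinematic curvature condition degenerates at $y=0$ when $m\geq 3$.

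\textbf{Reduction to frequency-localized pieces.} First I would perform a Littlewood--Paley decomposition $f = f_0 + \sum_{k\geq 1} f_k$ with $\hat f_k$ supported in $\{|\xi|\sim 2^k\}$. Since $A_t f_0$ is smooth and stabilizes as $t\to 0$, the contribution of $f_0$ to $V_q(\mathcal{A} f)$ is controlled pointwise by the associated maximal operator, which by \cite{1994-Iosevich} is $L^p$-bounded for $p>m$ in case (i) and for $p>1$ in case (ii). It then suffices to prove, for each $k\ge 1$, a frequency-localized bound
\[
\|V_q(\mathcal{A}f_k)\|_{L^p(\bR^2)} \leq C\, 2^{-\epsilon k}\|f_k\|_{L^p(\bR^2)}
\]
with some $\epsilon>0$, so that the pieces sum to a global bound.

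\textbf{Long/short variation.} Following the scheme of \cite{2008-strongvariation}, split $V_q \leq V_q^{\text{long}} + V_q^{\text{short}}$, where $V_q^{\text{long}}$ is the $v_q$-norm of $\{A_{2^j}f_k\}_{j\in\bZ}$ and $V_q^{\text{short}}$ aggregates short variations over each $[2^j,2^{j+1}]$. For $q\geq 2$, use $V_q^{\text{long}}\leq V_2^{\text{long}}$, reducing to the $L^p$-bound on the vector-valued square function $\bigl(\sum_j |A_{2^{j+1}}f_k - A_{2^j}f_k|^2\bigr)^{1/2}$. For the short variation, a Sobolev-type embedding expresses $V_q(g;[2^j,2^{j+1}])$ (valid for $q>2$) in terms of $L^2$-norms of $g$ and $t g'$ on the block, so that $\|V_q^{\text{short}}\|_{L^p}^2$ factors into a product involving $A_t f_k$ and $t\partial_t A_t f_k$ in a mixed $L^p(L^2)$-norm. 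In both the long and the short contributions the problem is reduced to a local smoothing estimate
\[
\Bigl(\int_1^2 \|A_t f_k\|_{L^p}^p\,dt\Bigr)^{1/p}\leq C\, 2^{-k\alpha(p)}\|f_k\|_{L^p},
\]
and an analogous bound for $t\partial_t A_t f_k$.

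\textbf{The main obstacle: local smoothing without uniform cinematic curvature.} When $y$ is bounded away from $0$, the curve has non-vanishing curvature and the classical local smoothing theorem for Fourier integral operators with cinematic curvature applies, giving the required bound on the full sharp range. Near $y=0$, however, one has $\phi''(0) = \cdots = \phi^{(m-1)}(0)=0$ for $m\ge 3$, and the associated family of FIOs fails the cinematic curvature condition uniformly; standard local smoothing is therefore unavailable. The plan is to establish the mixed-norm $L^4\to L^4(L^2)$ local smoothing estimate advertised in the abstract for exactly this degenerate family, and then to interpolate it with the trivial $L^2\to L^2$ bound (obtained by Plancherel together with the $|\xi|^{-1/m}$ stationary-phase decay of $\widehat{\eta\, d\sigma}$) to obtain local smoothing on the full range $p>\max\{m,p/2\}^{-1}\cdot q$ consistent with the theorem. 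Case (ii) is treated along the same lines, but the hypothesis $a_0=0$ introduces a scaling homogeneity that, combined with a suitable dyadic decomposition in the $y$-variable, allows the $L^p$-range to open up all the way to $p>1$ while only requiring $q>2$ in place of $q>m$.

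\textbf{Necessity of $q\geq p/2$.} The sharpness of $q\geq p/2$ is established by a Knapp-type construction: test the inequality on the indicator of a thin rectangle tangent to the curve near $y=0$, compute the $L^p$-norm of $f$, and count from below the number of distinct values that $A_t f(x)$ assumes on a macroscopic set of $x$'s as $t$ varies over an $O(1)$ window. The resulting $v_q$-lower bound, compared with $\|f\|_{L^p}$, forces $q\geq p/2$. This construction parallels the one used in the spherical setting of \cite{2008-strongvariation}, and I expect the main difficulty to lie not in the necessity part but in the local smoothing estimate described above.
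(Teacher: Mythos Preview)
Your overall framework (long/short split, Sobolev embedding, reduction to local smoothing) matches the paper, but you have misidentified where the central technical novelty is used, and you are missing the decomposition that actually drives the isotropic proof. The paper does \emph{not} invoke the mixed-norm $L^4\to L^4(L^2)$ estimate for Theorem~\ref{Th-isotropic}; that estimate (the paper's Theorem~\ref{estimate-important--2}) is proved and used only for the \emph{non}isotropic Theorem~\ref{Th-nonisotropic}. In the isotropic case the paper instead performs a dyadic decomposition in the \emph{curve parameter} $y$, writing $A_t=\sum_k 2^{-k}T_k^{-1}A_t^k T_k$ with $A_t^k$ supported where $|y|\sim 1$. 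After this rescaling the curve piece has nonvanishing curvature, stationary phase produces a phase $\xi_2\widetilde{\Phi}(s,\delta)=M(\xi)+2^{km}a_0\xi_2+\delta R_k(\xi)$ whose principal part $M(\xi)$ has rank-one Hessian, and the classical Mockenhaupt--Seeger--Sogge local smoothing applies \emph{uniformly} in $k$ (the $2^{km}a_0\xi_2$ term is linear and removed by translation; the $\delta R_k$ term is a harmless small perturbation). No degenerate-curvature local smoothing is needed here.

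This structural point also explains the thresholds, which your proposal leaves vague. The rescaling introduces a loss: $\|V^{sh}_p(\mathcal{A}^k_j f)\|_{L^p}\lesssim 2^{km/p}2^{-j\epsilon}\|f\|_{L^p}$ and $\|V^{sh}_2(\mathcal{A}^k_j f)\|_{L^4}\lesssim 2^{km/2}2^{j\epsilon}\|f\|_{L^4}$, coming from $|\xi_2\widetilde{\Phi}|\lesssim 2^{km}2^j$ in the $\partial_t$ factor of the Sobolev embedding. Interpolating and balancing against the Jacobian $2^{-k}$ is exactly what forces $q>m$ when $m<p\le 2m$ and $q>p/2$ when $p>2m$. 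In case (ii), $a_0=0$ kills the $2^{km}$ loss entirely, which is why the range opens to $p>1$, $q>2$; this is not a separate ``scaling homogeneity'' argument but the same computation with one term absent. Your plan to attack the near-$y=0$ region directly via a degenerate local smoothing estimate would have to reproduce these precise losses, and it is not clear the phase of Theorem~\ref{estimate-important--2} even matches the isotropic problem. Finally, for necessity the paper does not use a Knapp box but an oscillatory test function $\widehat{\tilde f_\lambda}(\eta)=\chi_0(\eta_1/\lambda)\chi_1(\eta_2)e^{-i\eta_1^2/(2\lambda)}$, chosen so that stationary phase in $t$ produces $\sim\lambda$ oscillations of size $\sim\lambda$ on a set of measure $\sim\lambda^{-2}$; a physical-side Knapp rectangle may well work, but you should check that it actually gives $q\ge p/2$ rather than a weaker constraint.
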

\begin{remark}
Since $Mf(x)\leq V_q(\mathcal{A}f)(x)+|f(x)|$, inequality (i) is a generalization of Iosevich's result \cite{1994-Iosevich} in the plane.
\end{remark}
\begin{remark}
When $m=2$, the Gaussian curvature dose not vanish. So (i) is a generalization of the result in  Jone-Seeger-Wright \cite{2008-strongvariation} for Bourgain-type circular mean operators for $d=2$.
\end{remark}

\begin{theorem}[nonisotropic dilations]  \label{Th-nonisotropic}
Define the average operaters $\mathcal{B}=\{B_t\}_{t>0}$, where
\begin{equation}
B_tf(x)=\int_{\mathbb{R}}f(x_1-ty,x_2-t^my^mb(y))\eta(y)dy,
\end{equation}
where $\eta(y)$ is supported in a sufficiently small neighborhood of the origin.
Suppose $b$ satisfies (\ref{condition-b}).
Then a priori estimates
\begin{equation}\label{Mainresult01-2}
\left\|V_q(\mathcal{B}f)\right\|_{L^p(\mathbb{R}^2)}  \leq C_{p,q} \|f\|_{L^p(\mathbb{R}^2)}
\end{equation}
holds
whenever $q>\max\{2,p/2\}$ and $p>2$.
Conversely, if (\ref{Mainresult01-2}) holds, then we necessarily have $q\geq p/2$.
\end{theorem}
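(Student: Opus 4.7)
The plan is to follow the Jones--Seeger--Wright framework for variational estimates, combined with the nonisotropic scaling structure of the operator $B_t$. Writing $V_q(\mathcal{B}f) \leq V_q^{L}(\mathcal{B}f) + V_2^{S}(\mathcal{B}f)$, where $V_q^{L}$ collects jumps across the dyadic scales $t \in \{2^k\}_{k \in \mathbb{Z}}$ and $V_2^{S}$ is the short $2$-variation inside each dyadic block, reduces the theorem to two essentially independent estimates. A key observation is that $B_t = \delta_t^{-1} B_1 \delta_t$ for the nonisotropic dilation $\delta_t(x_1,x_2) = (tx_1, t^m x_2)$, which yields a dilation-compatible setup that allows me to work at a single scale after Littlewood--Paley decomposition.

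For the long-variation piece, the bound $V_q^{L}(\mathcal{B}f) \leq \bigl(\sum_k |B_{2^{k+1}} f - B_{2^k} f|^q\bigr)^{1/q}$ is controlled in $L^p$ by interpolating an $\ell^\infty$ bound obtained from Li's maximal inequality in \cite{LIwenjuan} (valid for $p > 2$) against an $L^2(\ell^2)$ square-function estimate. The latter follows from pointwise stationary-phase decay of the Fourier multiplier $m_t(\xi) = \int e^{-i(ty\xi_1 + t^m y^m b(y)\xi_2)}\eta(y)\,dy$ in the nonisotropic frequency norm $|t\xi_1| + |t^m \xi_2|$. This delivers the long-variation estimate for all $q > p/2$ and $p > 2$.

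The core of the proof is the short-variation inequality, which by the standard identity $V_2(\mathcal{B}^{(k)}f)^2 \leq \int_{2^k}^{2^{k+1}} |t\partial_t B_t f|^2\,dt/t$ and summation in $k$ reduces to an $L^p$-bound for the vertical square function $Gf = \bigl(\int_0^\infty |t\partial_t B_t f|^2\,dt/t\bigr)^{1/2}$. Decomposing $f$ by a nonisotropic Littlewood--Paley partition $\{P_j\}$ adapted to $\delta_t$, each $t\partial_t B_t P_j f$ is a Fourier integral operator whose underlying canonical relation degenerates near $y = 0$, reflecting the vanishing Gaussian curvature of the curve $(y, y^m b(y))$ at the origin; the cinematic curvature condition of Mockenhaupt--Seeger--Sogge thus fails uniformly, and this is where the main technical obstacle lies. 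To circumvent it, I would further dyadically split the $y$-integration by cutoffs $\chi(2^l y)$ away from the origin, so that on each piece the curvature is non-vanishing with a quantitative lower bound depending on $l$. A nonisotropic rescaling on each piece converts the operator into a model Fourier integral operator that does satisfy cinematic curvature uniformly at scale $1$; at this point I would invoke the mixed-norm $L^4 \to L^4(L^2)$ local smoothing estimate promised in the abstract. The Sobolev gain must beat the loss from undoing the rescaling, and the summation over $l$ and $j$ is the delicate technical point. Interpolating the resulting $L^4$ bound against a trivial $L^2(\ell^2)$ estimate and against the maximal $L^p$ bound for $p > 2$ then produces the full range $q > \max\{2, p/2\}$ with $p > 2$.

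For the necessity of $q \geq p/2$, I would test $V_q(\mathcal{B}f)$ against the indicator of a thin rectangle of dimensions $\delta \times \delta^m$ respecting the nonisotropic scaling. For such $f$, there is a set of $x$'s of measure $\gtrsim \delta^{m+1}$ on which $B_t f(x)$ executes $\gtrsim 1$ jumps over an interval in $t$ of length comparable to $\delta$; balancing $\|V_q(\mathcal{B}f)\|_p$ against $\|f\|_p$ and letting $\delta \to 0$ forces $q \geq p/2$, in the spirit of the Bourgain-type lower bounds for circular averages adapted to the present nonisotropic geometry.
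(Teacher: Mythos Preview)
Two steps in your outline would not close as written.

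First, bounding the short variation by the vertical square function $Gf=(\int_0^\infty|t\partial_tB_tf|^2\,dt/t)^{1/2}$ loses a half-power of the frequency parameter that local smoothing cannot recover. At frequency $|\delta_t\xi|\approx 2^j$ the multiplier of $B_{j,t}^k$ has size $2^{-j/2}$, while $t\partial_t$ of it has size $\delta^n2^{j/2}+2^{-j/2}$ (from differentiating the oscillatory phase; see (\ref{estimate-0901-2})--(\ref{estimate-090101})). Feeding only the derivative into the $L^4(L^2)$ local smoothing bound therefore produces pieces growing like $2^{j(1/2+\epsilon)}$, and the sum over $j$ diverges; this already fails in the classical circular case. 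The paper instead uses the balanced Sobolev inequality $\|F\|_{v_q}\le C\|F\|_{L^q}^{1/q'}\|F'\|_{L^q}^{1/q}$ (Lemma~\ref{Sobolev embbding}) so that the $2^{-j/2}$ from $B_{j,t}^k$ cancels the $2^{j/2}$ from its $t$-derivative before local smoothing is applied; only then do the five estimates of Lemma~\ref{lem-24-0909} interpolate to the full range.

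Second, your claim that a nonisotropic rescaling on each $y$-dyadic piece restores uniform cinematic curvature is false here. After the spatial rescaling the phase takes the form (\ref{ineq-li01}): its $t$-dependent part carries a coefficient $\delta^n=2^{-kn}$, which tends to zero as $k\to\infty$, and no further rescaling in $(x,t)$ removes this while keeping the symbol class intact. This is exactly why the paper must prove a new $L^4\to L^4(L^2)$ estimate (Theorem~\ref{estimate-important--2}) for FIOs with phase $x\cdot\xi+t\delta q(\xi)+\delta^2R$, tracking the degeneration through the bound $C_\epsilon\lambda^\epsilon\delta^{-1/2-\epsilon}$; this is the technical core, not a black box handled by rescaling. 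Lastly, your necessity sketch with the indicator of a thin rectangle produces no $q$-dependence: for such $f$ the average $B_tf(x)$ is unimodal in $t$, so $V_q$ is essentially $\sup_t|B_tf|$ for every $q$. One needs an oscillatory test function creating $\sim\lambda$ sign changes in $t$; the paper takes $\widehat{\tilde f_\lambda}(\eta)=\chi_0(\eta_1/\lambda)\chi_1(\eta_2)e^{-i\eta_1^2/(2\lambda)}$, obtains $V_q\gtrsim\lambda^{1+1/q}$ against $\|f_\lambda\|_p\lesssim\lambda$, and this forces $q\ge p/2$.
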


\begin{remark}
Since $\mathcal{M}f(x)\leq V_q(\mathcal{B}f)(x)+|f(x)|$, inequality (\ref{Mainresult01-2}) is a generalization of Li's result \cite{LIwenjuan} in the plane.
\end{remark}

\subsection{Organization of the article}

In this study, we frequently employ the long-short variational decomposition method, the stationary phase method, and Sobolev embedding inequality to address relevant issues. Section 2 will focus on presenting these three areas of expertise. In section 3, we establish the $L^p$-boundedness of variational operators associated with isotropic dilations of curves in the plane. In section 4, we demonstrate the $L^p$-boundedness of variational operators related to curves of finite type $m$ ($m\geq2$) and their associated dilations $(x_1,x_2)\rightarrow (tx_1,t^mx_2)$.
A crucial step in supporting our aforementioned conclusion involves estimating the $L^4_x(L^2_t)$-bounds for Fourier integral operators. While the relevant Fourier integral operator adheres uniformly to the cinematic curvature condition in case of isotropic dilations, it does not satisfy this condition uniformly for non-isotropic dilations. Therefore, it is imperative to extend local integrability results applicable to classical Fourier integral operators in order to encompass this scenario. In section 5, we will undertake this task.

\textit{Conventions}
Throughout this article, we will use the notation $A\ll B$ to indicate that there exists a sufficiently large constant $G$, independent of both $A$ and $B$, such that $GA\leq B$. We will write $A\lesssim B$ to mean that there is a constant $C$, independent of both $A$ and $B$, such that $A\leq CB$. Finally, we will write $A\approx B$ if   $A \lesssim B$ and  $B \lesssim A$.

\section{Preliminaries}

\subsection{Long and short variations}
Firstly, we introduce  the definitions of the long variation operator and short variation operator.
For $q\in[2,\infty)$   and each $j\in\mathbb{Z}$,
let
$$
V_{q,j}(\mathcal{T}f)(x):=\sup_{N\in\mathbb{N}}\sup_{2^j\leq t_1<\cdots<t_{N}\leq2^{j+1}}
\left(\sum_{i=1}^{N-1}\left|T_{t_{i+1}}f(x)-T_{t_i}f(x)\right|^q\right)^{\frac1q} .
$$
We define the short variation opetator
$$
V_q^{sh} (\mathcal{T}f) (x):=\left(\sum_{j\in\mathbb{Z}}\left[V_{q,j}(\mathcal{T}f)(x)\right]^q\right)^{\frac1q} $$
and the long variation operator
$$
V_q^{dyad}(\mathcal{T}f)(x):=\sup_{N\in\mathbb{N}}\sup_{\substack{t_1<\cdots<t_N\\\{t_i\}_{i=1}^{N}\subset\mathbb{N}}}
\left\{\sum_{i=1}^{N-1}\left|T_{2^{t_{i+1}}}f(x)-T_{2^{t_{i}}}f(x)\right|^q \right\}^{\frac1q}.
$$

Based on the following lemma (see \cite[lemma 1.3]{2008-strongvariation}),
the estimate of $V_q(\mathcal{T})$  is split into considering  the long variation operator and the short variation operator.
\begin{lemma}\label{lemma-long-short}
For $q>2$,
\begin{equation}\label{ineq-long-short} V_q(\mathcal{T}f)(x)\lesssim   V_q^{sh}(\mathcal{T}f)(x) + V_q^{dyad}(\mathcal{T}f)(x). \end{equation}
\end{lemma}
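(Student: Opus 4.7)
The plan is to fix an arbitrary sequence $0<t_{1}<\cdots<t_{L}<\infty$ and decompose the telescoping sum $\sum_{i}|T_{t_{i+1}}f(x)-T_{t_{i}}f(x)|^{q}$ according to the dyadic scale of each endpoint. For each $i$, let $j_{i}\in\mathbb{Z}$ be the unique integer with $t_{i}\in[2^{j_{i}},2^{j_{i}+1})$, so that $j_{1}\le j_{2}\le\cdots\le j_{L}$. I will call an index $i$ \emph{internal} if $j_{i}=j_{i+1}$ and a \emph{jump} if $j_{i}<j_{i+1}$. For the internal indices the pair $(t_{i},t_{i+1})$ lies in a single dyadic block $[2^{j_{i}},2^{j_{i}+1})$, and after grouping by scale and summing within each scale, the resulting contribution is bounded directly by $V_{q,j}(\mathcal{T}f)(x)$, and then by $V_{q}^{sh}(\mathcal{T}f)(x)$ after an $\ell^{q}$-sum over $j$.

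For a jump index $i$, I would insert the intermediate dyadic endpoints and write
\begin{equation*}
T_{t_{i+1}}f(x)-T_{t_{i}}f(x) = \bigl(T_{2^{j_{i}+1}}f(x)-T_{t_{i}}f(x)\bigr)+\bigl(T_{2^{j_{i+1}}}f(x)-T_{2^{j_{i}+1}}f(x)\bigr)+\bigl(T_{t_{i+1}}f(x)-T_{2^{j_{i+1}}}f(x)\bigr),
\end{equation*}
then apply the elementary bound $(a+b+c)^{q}\le 3^{q-1}(a^{q}+b^{q}+c^{q})$. The two outer pieces are single-term variations inside the scales $j_{i}$ and $j_{i+1}$ respectively, hence each is dominated by the appropriate $V_{q,j}(\mathcal{T}f)(x)$. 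Since distinct jumps have strictly increasing exit scales $j_{i}$ and strictly increasing entry scales $j_{i+1}$ (by monotonicity of $i\mapsto j_{i}$), summing the outer pieces over all jumps yields at most two copies of $V_{q}^{sh}(\mathcal{T}f)(x)$. The middle pieces $T_{2^{j_{i+1}}}f(x)-T_{2^{j_{i}+1}}f(x)$ are increments between dyadic times of the form $2^{k}$, and read in order of $i$ their endpoints form a non-decreasing subsequence of the dyadic grid, so their $\ell^{q}$-sum is controlled by $V_{q}^{dyad}(\mathcal{T}f)(x)$. Combining the three estimates and taking a $q$-th root yields (\ref{ineq-long-short}).

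The argument is essentially combinatorial bookkeeping; the only point requiring care is verifying that each dyadic scale contributes at most one boundary term of each type, which follows at once from the monotonicity of $j_{i}$. I do not anticipate any substantial obstacle beyond keeping the indexing straight, and the hypothesis $q>2$ is used only implicitly through the definition of $V_{q}^{sh}$.
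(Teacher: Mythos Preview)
Your argument is correct and is precisely the standard proof of this decomposition: partition the indices into those that stay within a single dyadic block and those that cross blocks, and for the latter insert the two dyadic endpoints $2^{j_i+1}$ and $2^{j_{i+1}}$. The bookkeeping you describe---that distinct jumps have distinct exit scales and distinct entry scales, so the boundary contributions sum to at most a constant multiple of $[V_q^{sh}]^q$, while the interleaved dyadic exponents $j_{i_1}+1\le j_{i_1+1}\le j_{i_2}+1\le\cdots$ form an admissible sequence for $V_q^{dyad}$---is exactly right. One small point worth making explicit: the middle pieces $T_{2^{j_{i+1}}}f-T_{2^{j_i+1}}f$ are not consecutive increments along a single sequence, but since the intervals $[j_i+1,j_{i+1}]$ are disjoint (up to endpoints) one can merge all the endpoints into one increasing dyadic sequence and observe that the middle pieces form a subcollection of its consecutive increments; this is implicit in your phrase ``their $\ell^q$-sum is controlled by $V_q^{dyad}$''.

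The paper itself does not give a proof of this lemma; it simply cites \cite[Lemma~1.3]{2008-strongvariation}. Your write-up supplies the omitted details and matches the argument in that reference. Your closing remark is also accurate: the inequality holds for any $q\ge 1$, and the restriction $q>2$ in the statement reflects the range in which the short-variation operator $V_q^{sh}$ is defined and used elsewhere in the paper, not a genuine constraint in the present proof.
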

 \begin{lemma}(see \cite[Theorem 1.1]{2008-strongvariation}) \label{lemma-long-case}
 Suppose $\sigma$ is a compactly supported finite Borel measure on $\mathbb{R}^d$,
 and let $\mathcal{U}=\{U_t\}_{t>0}$ where
 $$U_tf(x)=\int_{\mathbb{R}^d}f(x-t^Py)d\sigma(y),$$
 with $P$ being a real $2\times2$ matrix whose eigenvalues having positive real parts.
 If there exists a number $b>0$ such that $|\widehat{\sigma}(\xi)|\leq C|\xi|^{-b}$,
 then
 $$
 \left\|V_q^{dyad}(\mathcal{U}f) \right\|_{L^p(\mathbb{R}^d)}   \leq C\|f\|_{L^p(\mathbb{R}^d)} $$
 holds for $1<p<\infty$ and $q>2$.
 \end{lemma}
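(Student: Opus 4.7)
The plan is to follow the approach of Jones-Seeger-Wright from \cite{2008-strongvariation}, which combines L\'epingle's variational inequality for martingales with a Littlewood-Paley decomposition adapted to the scaling $t^P$. Set $T_j:=U_{2^j}$. Because the eigenvalues of $P$ have positive real parts, there exists a constant $\lambda>0$ such that $t^P$ expands the unit ball at rate $\sim t^\lambda$. I would introduce a family of nested anisotropic dyadic cubes $\{Q_j\}$ adapted to this scaling and let $E_j$ denote the conditional expectation with respect to the $\sigma$-algebra generated by $Q_j$. L\'epingle's inequality applied to the martingale $\{E_j f\}$ then yields the target bound with $T_j$ replaced by $E_j$: $\|V_q^{dyad}(\{E_j f\})\|_{L^p}\lesssim\|f\|_{L^p}$ for every $q>2$ and $1<p<\infty$.

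It then suffices to control $V_q^{dyad}$ of the difference $D_j f:=T_j f - E_j f$. I would introduce an anisotropic Littlewood-Paley decomposition $\sum_{k\in\mathbb{Z}} P_k$ whose frequency shells match the scaling $t^P$, and estimate each piece $V_q^{dyad}(\{D_j P_k f\})$ separately. Using Plancherel and a standard reduction of $V_q$ ($q>2$) to an $\ell^2$-square function of consecutive dyadic differences (Rademacher-Menshov / Pisier-Xu type), the task reduces to an $L^2$ bound of the form $\|D_j P_k f\|_{L^2}\lesssim 2^{-\alpha|j+k|}\|P_k f\|_{L^2}$ for some $\alpha>0$. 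This should follow by splitting into two regimes: when $j+k\ll 0$, using the smoothness of $\widehat\sigma$ and of the martingale symbol near the origin, which makes $T_j$ and $E_j$ agree to high order on low frequencies; when $j+k\gg 0$, using the decay hypothesis $|\widehat\sigma(\xi)|\lesssim|\xi|^{-b}$ to bound both $T_j P_k$ and $E_j P_k$ separately.

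Finally, I would interpolate the resulting $L^2$ estimate with the uniform $L^p$ bound on $D_j P_k$, which follows from the individual $L^p$-boundedness of $T_j$, $E_j$ and $P_k$, to obtain $\|V_q^{dyad}(\{D_j P_k f\})\|_{L^p}\lesssim 2^{-\epsilon|k|}\|f\|_{L^p}$ for every $1<p<\infty$ and some $\epsilon=\epsilon(p,b)>0$, and then sum over $k$. I expect the main obstacle to be the construction of an anisotropic Littlewood-Paley and martingale framework compatible with the non-symmetric dilation $t^P$, ensuring that the orthogonality and almost-diagonality estimates are uniform in $j$ and $k$; once this framework is in place, the frequency-localized decay estimates follow in a routine way from the hypothesized decay of $\widehat\sigma$ together with the compact support of $\sigma$.
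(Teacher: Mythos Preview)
The paper does not supply its own proof of this lemma: it is simply quoted as Theorem~1.1 of \cite{2008-strongvariation} and used as a black box. So there is nothing in the present paper to compare your argument against.

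That said, your sketch is a faithful outline of the Jones--Seeger--Wright argument from \cite{2008-strongvariation}: one compares the dyadic averages $U_{2^j}$ with martingale conditional expectations $E_j$ adapted to the anisotropic dilations $t^P$, invokes L\'epingle's inequality for the martingale part, and controls the discrepancy $U_{2^j}-E_j$ via a Littlewood--Paley decomposition together with the Fourier decay hypothesis $|\widehat\sigma(\xi)|\lesssim|\xi|^{-b}$ and the cancellation/smoothness near the origin. One point to be careful about: the reduction from $V_q^{dyad}$ on the differences to a square function is not merely ``consecutive dyadic differences'' but rather the full dyadic martingale/chaining argument (or the jump-function method) that produces the needed $\ell^2$ control after Littlewood--Paley localization; in \cite{2008-strongvariation} this step is handled through their general framework for dilation-invariant families, and the anisotropic geometry enters only through the choice of the Littlewood--Paley pieces and the martingale filtration. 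Once that is set up correctly, your decay-and-interpolate scheme is exactly what is done there.
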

 \begin{lemma} (see \cite[Lemma 6.1]{2008-strongvariation}) \label{lemma-short-remainder}
 Let $\mathcal{U}=\{U_t\}_{t>0}$,  where $U_tf=f*\sigma_t$ is described above.
 Suppose that $|\widehat{\sigma}(\xi)|\leq C_b (1+|\xi|)^{-b}$ for some $b>\frac{d+1}{2}$ .
 Then the a priori estimate
 $$
 \left\|V_2^{sh}(\mathcal{U}f)\right\|_{L^p(\mathbb{R}^d)} \leq C_p\|f\|_{L^p(\mathbb{R}^d)} $$
 holds for $p>1$.
 \end{lemma}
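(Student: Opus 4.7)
The plan is to reduce the short‑variation norm to an $L^p$ bound for a Littlewood--Paley--Stein type $g$-function and then treat that $g$-function as a vector‑valued convolution operator.

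\textbf{Step 1: Pointwise reduction.} For each dyadic level $j$ and any partition $2^j\le t_1<\cdots<t_N\le 2^{j+1}$, I would apply the fundamental theorem of calculus followed by Cauchy--Schwarz to obtain
$$
\sum_{i=1}^{N-1}|U_{t_{i+1}}f(x)-U_{t_i}f(x)|^2\;\le\;\sum_i(t_{i+1}-t_i)\!\int_{t_i}^{t_{i+1}}\!|\partial_tU_tf(x)|^2\,dt\;\le\;2^j\!\int_{2^j}^{2^{j+1}}\!|\partial_tU_tf(x)|^2\,dt.
$$
Taking the supremum over partitions and then summing over $j\in\mathbb Z$ gives the clean pointwise bound
$$
V_2^{sh}(\mathcal U f)(x)^2\;\le\;\int_0^\infty |t\,\partial_tU_tf(x)|^2\,\frac{dt}{t}\;=:\;Gf(x)^2,
$$
since $2^j\le t$ throughout $[2^j,2^{j+1}]$. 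It therefore suffices to prove that $G$ is bounded on $L^p(\mathbb R^d)$ for $1<p<\infty$.

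\textbf{Step 2: Fourier-analytic setup.} Because $\widehat{U_tf}(\xi)=\widehat f(\xi)\,\widehat\sigma(t\xi)$, I get $t\partial_t\widehat{U_tf}(\xi)=\widehat f(\xi)\,m(t\xi)$, where $m(\eta):=\eta\cdot\nabla\widehat\sigma(\eta)$. Since $\sigma$ has compact support, $m$ is smooth, and differentiating the hypothesis yields symbol bounds $|\partial^\alpha m(\eta)|\lesssim_\alpha (1+|\eta|)^{1-b}$. Thus $G$ can be rewritten as the $L^2(dt/t)$-valued operator
$$
Tf(x)(t):=t\,\partial_tU_tf(x),\qquad \|Gf\|_{L^p(\mathbb R^d)}=\|Tf\|_{L^p(\mathbb R^d;L^2(dt/t))},
$$
which is a vector-valued convolution with Fourier multiplier $\{m(t\xi)\}_{t>0}$.

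\textbf{Step 3: $L^2$ bound and vector-valued Calder\'on--Zygmund.} The $L^2$ estimate reduces by Plancherel to the uniform scalar inequality
$$
\int_0^\infty |m(t\xi)|^2\,\frac{dt}{t}=\int_0^\infty|m(s\hat\xi)|^2\,\frac{ds}{s}<\infty,
$$
which holds because $|m(\eta)|\lesssim\min(|\eta|,(1+|\eta|)^{1-b})$ and the hypothesis forces $b>1$. To pass from $p=2$ to the full range $1<p<\infty$, I would invoke vector-valued Calder\'on--Zygmund theory: it suffices to verify the Hörmander-type estimate
$$
\sup_{y\in\mathbb R^d}\int_{|x|>2|y|}\bigl\|K(x-y)-K(x)\bigr\|_{L^2(dt/t)}\,dx<\infty,
$$
where $K(x)(t)=t\,\partial_t\sigma_t(x)$. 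Splitting $K$ into dyadic scales in $t$ and using the symbol bounds of order $1-b<-(d-1)/2$, one can apply Sobolev embedding on the Fourier side to convert the pointwise decay of the multiplier into integrable $L^2(dt/t)$-tails of $K$ in $x$; the condition $b>\frac{d+1}{2}$ is precisely what makes the resulting dyadic sum geometrically convergent.

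\textbf{Main obstacle.} The technical heart is the verification of this vector-valued Hörmander condition. The hypothesis $b>\frac{d+1}{2}$ is sharp for this argument: one needs strictly more than $(d-1)/2$ derivatives of decay on the multiplier $m$ (after the gain of one order from the $t\partial_t$) so that each dyadic-scale kernel has an $L^1$-integrable $L^2(dt/t)$-norm, with geometric summability across scales. Once that is in place, the vector-valued Calder\'on--Zygmund theorem delivers the full $L^p$ range for $1<p<\infty$, and combined with Step~1 this concludes the proof.
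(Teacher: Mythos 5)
Your Steps 1 and 2 are fine and are the standard reduction: the Cauchy--Schwarz/fundamental-theorem bound of $V_2^{sh}$ by the $g$-function $Gf=\bigl(\int_0^\infty|t\partial_t U_tf|^2\,dt/t\bigr)^{1/2}$ is exactly how one begins, and the multiplier identity $t\partial_t\widehat{U_tf}(\xi)=m(t\xi)\widehat f(\xi)$ with $m(\eta)=\eta\cdot\nabla\widehat\sigma(\eta)$ is correct. One small repair: the bound $|\partial^\alpha m(\eta)|\lesssim(1+|\eta|)^{1-b}$ does not come from ``differentiating the hypothesis'' (a pointwise inequality cannot be differentiated); it follows from compact support, since $\partial^\alpha\widehat\sigma=\widehat{(-ix)^\alpha\sigma}$ and $(-ix)^\alpha\sigma=\varphi\cdot(-ix)^\alpha\sigma$ for a fixed bump $\varphi$, so $\partial^\alpha\widehat\sigma=\widehat{\varphi_\alpha}\ast\widehat\sigma$ with $\widehat{\varphi_\alpha}$ Schwartz, and convolution with a Schwartz function preserves the decay $(1+|\cdot|)^{-b}$. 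Note also that the paper offers no proof of this lemma (it is quoted from Jones--Seeger--Wright), so the only question is whether your sketch closes.

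It does not, and the gap is exactly at what you call the technical heart. The decisive feature of $m$ is that differentiation gives no extra decay: every $\partial^\alpha m$ still only decays like $(1+|\eta|)^{1-b}$, so $m$ is not a Mikhlin-type symbol of order $1-b$. When you split into dyadic frequency blocks $|\xi|\sim 2^k$ and convert Fourier-side information into the $L^1_x\bigl(L^2(dt/t)\bigr)$ bounds that the H\"ormander condition requires, the only tools available are Plancherel/Sobolev embedding on the (fixed) support of $\sigma_t$, and these cost a factor $2^{kd/2}$: the block kernel has $L^2_x$ norm $\lesssim 2^{k(1-b)}2^{kd/2}$, the translation difference gains nothing in the regime $t\lesssim 2^k$, and one ends up with per-block H\"ormander constants of size about $2^{k(1-b+d/2)}$ (up to factors of $k$). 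The dyadic sum is therefore geometric only when $b>\frac d2+1$, not for all $b>\frac{d+1}2$; your accounting ``$1-b<-(d-1)/2$ suffices'' implicitly uses the numerology of oscillatory kernels concentrated near a hypersurface (as for spherical measure), a structure a general compactly supported finite Borel measure does not have. In the intermediate range $\frac{d+1}2<b\le\frac d2+1$ the frequency-block kernels can genuinely have $L^1$ norms of order $2^{k(1-b+d/2)}\to\infty$, so nothing in the hypothesis makes the vector-valued H\"ormander condition you propose to verify hold, and the case of $p$ near $1$ is left unproven (also note $K(x)(t)=t\partial_t\sigma_t(x)$ is only a distribution of order one, so the H\"ormander integrand is not even defined pointwise before some such decomposition). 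Your argument as written proves the lemma only under the stronger hypothesis $b>\frac d2+1$; to reach $b>\frac{d+1}2$ one must exploit, as in the cited Lemma 6.1 of Jones--Seeger--Wright, that $\sigma$ is a finite measure (uniform $L^p$ control and maximal-function domination of the frequency-localized families, combined with the $L^2$ gain $2^{-k(b-1/2)}$), rather than a pure kernel-regularity Calder\'on--Zygmund argument for the full convolution kernel.
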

\subsection{Method of stationary phase}
The next lemma implies that we can only consider the short variational operators.
 \begin{lemma} (van der corput's lemma)\label{lemma-var der}
 If $\Phi^{(i)}(0)=0$  for $0\leq i\leq m-1$ but $\Phi^{(m)}(0)\neq0$ and $m\geq 2$   then
 $$
 \left|\int_0^{\infty}e^{i\lambda \Phi(y)}\eta(y)dy\right|\leq C\lambda^{-1/m}
 $$
 provided that $\eta$ has small enough support.
 \end{lemma}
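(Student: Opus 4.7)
The plan is to reduce the statement to the classical van der Corput estimate and then prove that by induction on $m$. Since $\Phi^{(m)}$ is continuous with $\Phi^{(m)}(0)\neq 0$, choosing $\supp\eta$ sufficiently small guarantees $|\Phi^{(m)}(y)|\geq c>0$ throughout $\supp\eta$; after the rescaling $\Phi\mapsto\Phi/c$, $\lambda\mapsto c\lambda$ and, if necessary, splitting the support into two pieces on which $\Phi^{(m)}$ has constant sign, we may assume $\Phi^{(m)}\geq 1$ there. The vanishing conditions $\Phi^{(i)}(0)=0$ for $i<m$ then drop out of the argument, and it suffices to prove
$$
\l|\int e^{i\lambda\Phi(y)}\eta(y)\,dy\r|\leq C\lambda^{-1/m}
$$
under the sole hypothesis $\Phi^{(m)}\geq 1$ on $\supp\eta$.

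The inductive step is the core of the argument. Assume the estimate is known for exponent $1/(m-1)$ under the hypothesis that the $(m-1)$st derivative of the phase is bounded below by $1$. Since $\Phi^{(m)}\geq 1$, the function $\Phi^{(m-1)}$ is strictly increasing, so for any $\mu>0$ the sublevel set $I_\mu:=\{y\in\supp\eta:|\Phi^{(m-1)}(y)|\leq\mu\}$ is an interval of length at most $2\mu$. On $I_\mu$ the trivial bound $\|\eta\|_\infty|I_\mu|$ contributes $O(\mu)$, while on the complement (at most two intervals on which $|\Phi^{(m-1)}|\geq\mu$) the rescaled phase $\tilde\Phi=\Phi/\mu$ satisfies $|\tilde\Phi^{(m-1)}|\geq 1$, and applying the inductive hypothesis with parameter $\lambda\mu$ contributes $O((\lambda\mu)^{-1/(m-1)})$. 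Summing these two contributions and optimizing at $\mu=\lambda^{-1/m}$ produces the desired $\lambda^{-1/m}$ bound. The base case $m=1$ follows from a direct integration by parts via $e^{i\lambda\Phi}=(i\lambda\Phi')^{-1}\partial_y e^{i\lambda\Phi}$, using the monotonicity of $\Phi'$ (after splitting at its unique critical point, if any) to handle the boundary terms and the bounded variation of $1/\Phi'$.

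The main technical obstacle is careful bookkeeping across the rescaling steps: one must verify that, after normalizing $\Phi$ by $\mu$ on each sub-interval, the amplitude $\eta$ restricted there still has a small enough support for the inductive hypothesis to apply, and that the constants produced recursively remain finite. Since $m$ is fixed in the lemma and $\eta$ is smooth with compact support, this causes no essential difficulty, and the final constant $C$ depends only on $m$, the lower bound $c$ on $|\Phi^{(m)}|$, and a finite $C^k$ norm of $\eta$.
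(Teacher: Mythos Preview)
The paper does not actually prove this lemma: it is stated in the Preliminaries section as a classical fact (the standard van der Corput estimate) and used without proof. Your proposal supplies the well-known inductive argument, which is correct and is exactly how the result is proved in standard references such as Stein's \emph{Harmonic Analysis} or Sogge's \emph{Fourier Integrals in Classical Analysis}.

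One small point worth tightening in your write-up: in the base case you need $|\Phi'|\geq 1$ \emph{and} $\Phi'$ monotone for the integration-by-parts argument to give a bound independent of the interval (so that $1/\Phi'$ has bounded variation controlled by $1$). Your parenthetical ``after splitting at its unique critical point, if any'' is slightly confusing, since $\Phi'$ has no zeros when $|\Phi'|\geq 1$; what you need is monotonicity of $\Phi'$, and this is automatically inherited in the inductive step (when passing from $m$ to $m-1$ you still have $\Phi^{(m)}\geq 1$, hence $\Phi^{(m-1)}$ monotone on each sub-interval, which for $m=2$ gives exactly the monotonicity of $\Phi'$ you need). Also, the amplitude $\eta$ is handled most cleanly by first proving the estimate for $\eta\equiv 1$ on an interval and then inserting $\eta$ via a further integration by parts (Abel summation), at the cost of an extra factor depending on $\|\eta'\|_{L^1}$; this is cleaner than carrying $\eta$ through the induction. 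None of this affects the correctness of your outline.
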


We frequently employ the subsequent approach of stationary phase.
\begin{lemma}\label{stationary phase}(\cite[Theorem 1.2.1]{1993-sogge-stionary phase})
Let $S$ be s smooth hypersurface in $\mathbb{R}^d$  with non-vanishing Gaussian
curvature and $d\mu$ the Lebesgue measure on $S$. Then
$$
\left|\widehat{d\mu}(\xi)\right|\leq C(1+|\xi|)^{-\frac{d-1}{2}}.
$$
Moreover, suppose that $\Gamma\subset\mathbb{R}^d\backslash 0 $   is the cone consisting of all $\xi$
which are normal to $S$  at some point $x\in S$  belonging to a fixed relatively compact
neighborhood $\mathcal{N}$  of supp $d\mu$.
Then
$$
\left|\left(\frac{\partial}{\partial\xi}\right)^{\alpha}\widehat{d\mu}(\xi)\right|
=\mathcal{O} (1+|\xi|)^{-N}  \,\mbox{for all $N\in \mathbb{N}$, if $\xi\notin \Gamma$},
$$
$$
\widehat{d\mu}(\xi)=\sum  e^{-i\langle x_j,\xi\rangle} a_j(\xi)    \,\,\,\mbox{if $\xi\in \Gamma$},
$$
where the finite sum is taken over all $x_j\in\mathcal{N}$ having $\xi$   as the normal and
$$
\left|\left(\frac{\partial}{\partial \xi}\right)^{\alpha}a_j(\xi)\right|
\leq C_{\alpha}  (1+|\xi|)^{-\frac{d-1}{2}-|\alpha|}.
$$
\end{lemma}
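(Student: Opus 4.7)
The plan is to follow the classical method of stationary phase from Sogge's book \cite{1993-sogge-stionary phase}. First I would use a smooth partition of unity $\{\chi_\nu\}$ to split $d\mu$ into small patches on which $S$ can be parameterized as a graph $x_d = \psi_\nu(y)$, $y\in U_\nu\subset\mathbb{R}^{d-1}$, in a suitable rotated coordinate system. On each patch the problem reduces to estimating the oscillatory integral
\begin{equation*}
I_\nu(\xi) = \int_{U_\nu} e^{-i\Phi_\nu(y,\xi)} a_\nu(y)\, dy, \qquad \Phi_\nu(y,\xi) = \langle y,\xi'\rangle + \psi_\nu(y)\,\xi_d,
\end{equation*}
where $\xi = (\xi',\xi_d)$ and $a_\nu$ incorporates $\chi_\nu$ and the area element $\sqrt{1+|\nabla\psi_\nu|^2}$.

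Next I would analyze the critical points of $\Phi_\nu$ in $y$. The equation $\nabla_y\Phi_\nu = \xi' + \xi_d\nabla\psi_\nu(y)=0$ forces $\xi$ to be proportional to $(-\nabla\psi_\nu(y),1)$, i.e.\ normal to $S$ at $(y,\psi_\nu(y))$. For $\xi\notin\Gamma$ there is no critical point in $\supp a_\nu$, so iterated integration by parts with the operator $L = |\nabla_y\Phi_\nu|^{-2}\,\overline{\nabla_y\Phi_\nu}\cdot\nabla_y$ delivers $\mathcal{O}((1+|\xi|)^{-N})$ for all $N$. For $\xi\in\Gamma$ the non-vanishing Gaussian curvature of $S$ is equivalent to $\det D^2\psi_\nu(y_j)\neq 0$ at every critical point $y_j$, hence $D_y^2\Phi_\nu = \xi_d D^2\psi_\nu(y_j)$ is non-degenerate with $|\det D_y^2\Phi_\nu|\approx |\xi|^{d-1}$. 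The method of stationary phase then yields
\begin{equation*}
I_\nu(\xi) = \sum_j e^{-i\Phi_\nu(y_j,\xi)}\, a_{\nu,j}(\xi) = \sum_j e^{-i\langle x_j,\xi\rangle}\, a_{\nu,j}(\xi),
\end{equation*}
with $|a_{\nu,j}(\xi)|\lesssim (1+|\xi|)^{-(d-1)/2}$ coming from the prefactor $(\det D_y^2\Phi_\nu)^{-1/2}$.

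For the derivative bounds on $a_j$, I would invoke the implicit function theorem applied to $\nabla_y\Phi_\nu(y,\xi)=0$: since $D_y^2\Phi_\nu$ is invertible, the critical point $y_j=y_j(\xi)$ depends smoothly on $\xi/|\xi|$, and is homogeneous of degree $0$ in $\xi$. Then differentiating the full stationary phase expansion
\begin{equation*}
a_{\nu,j}(\xi) \sim (2\pi)^{(d-1)/2}\,|\det D_y^2\Phi_\nu(y_j,\xi)|^{-1/2} e^{i\pi\sigma/4}\sum_{k\ge 0} |\xi|^{-k}\, L_k a_\nu(y_j)
\end{equation*}
and noting that each $\xi$-derivative either hits the degree-$(-1/2)$-homogeneous prefactor or the degree-$0$-homogeneous quantity $y_j(\xi)$, we gain a factor of $|\xi|^{-1}$ per derivative. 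Summing the contributions over $\nu$ and the finitely many $x_j\in\mathcal{N}$ with normal direction $\xi$ gives the claimed representation and bounds. The main obstacle is the bookkeeping in this last step, namely verifying that the asymptotic expansion is genuinely differentiable in the parameter $\xi$ with the sharp $|\xi|^{-|\alpha|}$ gain; this is handled by writing the remainders in the stationary phase formula explicitly and using that $L_k$ are differential operators of order $2k$ evaluated at $y_j(\xi)$, combined with the homogeneity argument above.
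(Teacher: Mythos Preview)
The paper does not give its own proof of this lemma: it is quoted verbatim as \cite[Theorem 1.2.1]{1993-sogge-stionary phase} and used as a black box in the subsequent sections. So there is no ``paper's proof'' to compare against; your sketch is essentially the standard stationary phase argument that appears in Sogge's book, and it is correct in outline. The only comment worth making is that in the context of this paper you would not be expected to reprove this result at all---it is a preliminary tool, and a citation suffices.
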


\subsection{Sobolev embedding inequality}
We present a Sobolev embedding inequality that can be utilized for estimating the $q$-variations, along with a technique for estimating the upper bound of $L^4$ for the global square function.
\begin{lemma}(see \cite[inequality (38)]{2008-strongvariation})\label{Sobolev embbding}
 Suppose $F\in C^{1}(I)$, where $I$ is a closed interval in $ \mathbb{R}^+$.
 Then for $q\geq1$,
 $$
 \left\|\{F(t)\}_{t>0} \right\|_{v_q}   \leq C_q\|F\|^{1/q'}_{L^q(I)}\|F'\|^{1/q}_{L^q(I)}, $$
 where $1/q+1/q'=1$.
\end{lemma}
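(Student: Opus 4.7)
The inequality is a Sobolev-type interpolation that controls the (scale-invariant) $v_q$ norm of $F$ by a geometric mean of its $L^q$ and $W^{1,q}$ norms; note that the $q=1$ case reduces to the total-variation bound $\|F\|_{v_1}\leq \|F'\|_{L^1}$, so I would focus on $q>1$. The plan is to raise to the $q$-th power and prove, for any partition $t_0<t_1<\cdots<t_N$ in $I$ (with $J_i:=[t_i,t_{i+1}]$ and $c_i:=F(t_{i+1})-F(t_i)$), the pointwise estimate
\begin{equation*}
\sum_i |c_i|^q \leq C_q \|F\|_{L^q(I)}^{q-1}\|F'\|_{L^q(I)},
\end{equation*}
by introducing the balancing scale $h:=\|F\|_{L^q(I)}/\|F'\|_{L^q(I)}$ (both quantities assumed positive; the degenerate cases are trivial) and splitting the indices into short ($|J_i|\leq h$) and long ($|J_i|>h$) blocks.

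On short intervals I would apply H\"older directly to $c_i=\int_{J_i} F'(s)\,ds$, obtaining $|c_i|^q\leq |J_i|^{q-1}\|F'\|_{L^q(J_i)}^q\leq h^{q-1}\|F'\|_{L^q(J_i)}^q$. Summing over the disjoint $J_i$'s and substituting the value of $h$ yields $\sum_{\text{short}} |c_i|^q\leq h^{q-1}\|F'\|_{L^q(I)}^q = \|F\|_{L^q}^{q-1}\|F'\|_{L^q}$.

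On long intervals I would instead pass through the endpoint values, $|c_i|^q \lesssim |F(t_i)|^q + |F(t_{i+1})|^q$, and bound each endpoint by an averaging argument. Since $|J_i|>h$, the sub-intervals $A_i^L:=[t_i,t_i+h]$ and $A_i^R:=[t_{i+1}-h,t_{i+1}]$ both lie inside $J_i$. Writing $F(t_i)=F(s)-\int_{t_i}^s F'(r)\,dr$ for $s\in A_i^L$, averaging in $s$ over $A_i^L$, using Fubini on the double integral, and applying H\"older on each piece produces
\begin{equation*}
|F(t_i)| \leq h^{-1/q}\|F\|_{L^q(A_i^L)} + h^{1/q'}\|F'\|_{L^q(A_i^L)},
\end{equation*}
and symmetrically for $F(t_{i+1})$ using $A_i^R$. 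Raising to the $q$-th power, summing over the long $i$ (the $A_i^{L/R}$'s are disjoint across different $i$ because $A_i^{L/R}\subset J_i$ and the $J_i$'s are pairwise disjoint), and substituting the value of $h$, both terms produced collapse to the same quantity $\|F\|_{L^q}^{q-1}\|F'\|_{L^q}$; thus $\sum_{\text{long}}|c_i|^q\lesssim \|F\|_{L^q}^{q-1}\|F'\|_{L^q}$ as well.

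Adding the two contributions and taking the $q$-th root, then passing to the supremum over partitions, will finish the proof. The main obstacle is pinning down the correct scale $h$: too small a value loses on short intervals (since $h^{q-1}\|F'\|_{L^q}^q$ overshoots the target), and too large a value loses on long ones (since $h^{-1}\|F\|_{L^q}^q$ overshoots). The choice $h=\|F\|_{L^q}/\|F'\|_{L^q}$ is precisely the crossover at which the two regimes produce the same mixed norm $\|F\|_{L^q}^{q-1}\|F'\|_{L^q}$, which is what lets the pieces assemble into the stated inequality.
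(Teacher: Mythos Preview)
The paper does not give its own proof of this lemma; it is quoted directly from \cite[inequality~(38)]{2008-strongvariation} and used as a black box throughout Sections~3--4. Your argument is a correct, self-contained proof: the short/long splitting at the scale $h=\|F\|_{L^q}/\|F'\|_{L^q}$ is exactly the right balance, the H\"older bound on short blocks and the averaging bound on long blocks both land on $\|F\|_{L^q}^{q-1}\|F'\|_{L^q}$, and the disjointness you need (of the $J_i$'s, hence of the $A_i^{L}$'s across $i$ and of the $A_i^{R}$'s across $i$) is genuinely available. Since there is no in-paper proof to compare against, your write-up could serve as a complete substitute for the citation.
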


\begin{lemma}(see \cite[p.6737]{2008-strongvariation})\label{multiplier-Th}
Let $\{m_s:1\leq s\leq 2\}$  be a family Fourier multipliers on $\mathbb{R}^d$,
each of which is compactly supported on $\{\xi:1\leq |\xi|\leq 2\}$
and satisfies
$$
\sup_{s\in[1,2]}\left|\partial_{\xi}^{\tau}m_{s}(\xi)\right|\leq B
$$
for each $0\leq|\tau|\leq d+1$ for some positive constant $B$.
Assume that there exists some positive constant $A$ such that
$$
\sup_{j\in \mathbb{Z}}\left\|\left(\int_{1}^{2}|\mathcal{F}^{-1}[m_s(2^j\cdot)\widehat{f}(\cdot)]|^2ds \right)^{\frac12}\right\|_{L^2(\mathbb{R}^d)}
\leq A \|f\|_{L^2(\mathbb{R}^d)}
$$
and
$$
\sup_{j\in \mathbb{Z}}\left\|\left(\int_{1}^{2}|\mathcal{F}^{-1}[m_s(2^j\cdot)\widehat{f}(\cdot)]|^2ds \right)^{\frac12}\right\|_{L^p(\mathbb{R}^d)}
\leq A \|f\|_{L^p(\mathbb{R}^d)}
$$
for some $p>2$.
Then
$$
\left\|\left(\sum_{j\in\mathbb{Z}}\int_{1}^2\left|\mathcal{F}^{-1}[m_s(2^j\cdot)\widehat{f}(\cdot)]\right|^2ds\right)^{\frac12}  \right\|_{L^p(\mathbb{R}^d)}
\lesssim A\log (2+B/A) \|f\|_{L^p(\mathbb{R}^d)}.
$$ \end{lemma}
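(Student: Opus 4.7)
The lemma is a vector-valued Fourier multiplier theorem of Marcinkiewicz type, and I would prove it in three stages. For the $L^2$ bound: by Plancherel, the per-$j$ $L^2$ hypothesis is equivalent to the pointwise estimate $\int_1^2 |m_s(\eta)|^2\,ds \leq A^2$ for a.e.\ $\eta$ in the support annulus $\{1 \leq |\eta| \leq 2\}$. Summing over $j$ and using that for each $\xi \neq 0$ only $O(1)$ of the rescaled shells $\{2^j|\xi| \in [1,2]\}$ are active, one gets directly
$$
\left\|\left(\sum_j \int_1^2 |\mathcal{F}^{-1}[m_s(2^j\cdot)\widehat{f}]|^2\,ds\right)^{1/2}\right\|_{L^2}^2 = \int |\widehat{f}(\xi)|^2 \sum_j \int_1^2 |m_s(2^j\xi)|^2\,ds\,d\xi \leq CA^2\|f\|_{L^2}^2.
$$

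Next I would reduce the $L^p$ square function bound to a single vector-valued multiplier estimate. Since the output $\mathcal{F}^{-1}[m_s(2^j\cdot)\widehat{f}]$ has frequency support in a shell at scale $2^{-j}$, by the vector-valued Littlewood--Paley theorem with values in the Hilbert space $L^2_s[1,2]$,
$$
\left\|\left(\sum_j \int_1^2 |\mathcal{F}^{-1}[m_s(2^j\cdot)\widehat{f}]|^2\,ds\right)^{1/2}\right\|_{L^p} \; \approx \; \left\|\sum_j \mathcal{F}^{-1}[m_s(2^j\cdot)\widehat{f}]\right\|_{L^p_x(L^2_s)} = \|M_s(D)f\|_{L^p_x(L^2_s)},
$$
where $M_s(\xi) = \sum_j m_s(2^j\xi)$. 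The naive Hörmander--Mihlin bound on this single multiplier is of order $B$, which is insufficient on its own and must be sharpened.

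To extract the factor $\log(2+B/A)$ I would expand $m_s$ in a Fourier series on a cube enclosing its support, writing $m_s(\xi) = \chi(\xi)\sum_{\nu \in \mathbb{Z}^d} c_\nu(s)\,e^{i\pi\nu\cdot\xi/4}$ with $\chi$ a smooth bump equal to $1$ on $\{1 \leq |\xi| \leq 2\}$. Integration by parts (using the $C^{d+1}$ hypothesis) yields $|c_\nu(s)| \leq CB(1+|\nu|)^{-(d+1)}$, while Plancherel combined with Stage 1 yields the summability $\sum_\nu \int_1^2 |c_\nu(s)|^2\,ds \leq CA^2$. This decomposes each $\mathcal{F}^{-1}[m_s(2^j\cdot)\widehat{f}]$ into a rapidly convergent sum of translates $\tau_{2^j\nu}\tilde{P}_j f$ of a smooth Littlewood--Paley projection, weighted by $c_\nu(s)$. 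Inserting this expansion into the $L^p(L^2_s)$-norm above, applying the per-$j$ $L^p$ hypothesis for each $\nu$-piece (the $j$-dependent translations $\tau_{2^j\nu}$ being absorbed via Peetre maximal function estimates and the Fefferman--Stein vector-valued maximal inequality), and summing over $\nu$ by splitting at the balancing threshold $|\nu| \sim (B/A)^{1/(d+1)}$ where the two bounds on $c_\nu$ cross over, produces a dyadic range of roughly $\log(B/A)$ scales each contributing $O(A)$.

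The main technical obstacle is the final stage: using both the $L^2$ and the per-$j$ $L^p$ hypotheses simultaneously to beat the crude Hörmander--Mihlin bound $B$. The delicate point is that the $j$-dependent shifts $\tau_{2^j\nu}$ do not commute naively with the $\ell^2_j$-structure, so the Peetre/Fefferman--Stein estimates must be sharp enough to prevent polynomial-in-$|\nu|$ losses that would destroy the logarithmic bound; the combination of orthogonality in $j$, Cauchy--Schwarz in $\nu$ against the two complementary bounds on $c_\nu$, and the translation-invariance of the maximal estimates is what makes $\log(2+B/A)$ appear in place of $B$.
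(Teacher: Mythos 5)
You are proving a quoted lemma: the paper itself gives no proof, deferring to Jones--Seeger--Wright \cite[p.~6737]{2008-strongvariation}, so I assess your argument on its own merits. Stage 1 is correct. The decisive gap is in Stage 3. Once you expand $m_s(\xi)=\chi(\xi)\sum_\nu c_\nu(s)e^{ic\nu\cdot\xi}$, you cannot ``apply the per-$j$ $L^p$ hypothesis for each $\nu$-piece'': that hypothesis concerns the multipliers $m_s(2^j\cdot)$ as a whole and says nothing about the individual modulated pieces $\chi(2^j\xi)e^{ic\nu\cdot 2^j\xi}$, whose square functions are shifted Littlewood--Paley square functions, bounded on $L^p$ only with a loss of order $\log(2+|\nu|)$. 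After the expansion, the only usable information is the pointwise bound $|c_\nu(s)|\lesssim B(1+|\nu|)^{-(d+1)}$, the Plancherel bound $\sum_\nu\int_1^2|c_\nu(s)|^2ds\lesssim A^2$, and the shifted square-function estimate. Accounting over a dyadic block $|\nu|\sim 2^k$, which contains about $2^{kd}$ lattice points, Cauchy--Schwarz in $\nu$ gives at best $\log(2+2^k)\min\bigl(2^{kd/2}A,\,2^{-k}B\bigr)$ per block, and summing over $k$ yields a bound of size roughly $A\,(B/A)^{d/(d+2)}\log(2+B/A)$, i.e.\ polynomial in $B/A$, not logarithmic. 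This loss is intrinsic to the mode-by-mode triangle inequality: for $m_s(\xi)\approx A\,e^{iR\theta(\xi)}\psi(\xi)$ with $R\sim B/A$ the coefficients are genuinely spread over $\sim R^d$ modes of size $\sim AR^{-d/2}$, so ``each dyadic range contributing $O(A)$'' cannot be true within your scheme; the Fourier expansion destroys exactly the cancellation that the per-scale $L^p$ hypothesis encodes. Since producing the factor $\log(2+B/A)$ is the whole content of the lemma, this is a genuine gap, which you flag as ``the main technical obstacle'' but do not resolve.

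A secondary flaw: the Stage 2 equivalence $\|(\sum_j\int_1^2|\mathcal{F}^{-1}[m_s(2^j\cdot)\widehat f]|^2ds)^{1/2}\|_{L^p}\approx\|M_s(D)f\|_{L^p_x(L^2_s)}$ is only one-sided. The direction you need would require recovering each piece from the sum; as adjacent annuli meet along their boundaries this amounts to sharp radial cutoffs, which are not $L^p(\mathbb{R}^d)$-bounded for $d\ge2$, $p\ne2$. Fortunately this reduction is unnecessary and can simply be dropped. For comparison, the standard route to bounds of the form $A\log(2+B/A)$ (and the one behind the cited source) does not expand the multiplier in modes: one truncates the convolution kernels of the $m_s(2^j\cdot)$ at the dyadic spatial scales $2^{j+i}$, $0\le i\lesssim\log_2(2+B/A)$. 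Each truncated family inherits the per-scale $L^2$ and $L^p$ bounds with constant $O(A)$, because spatial truncation is convolution of the multiplier with an $L^1$-normalized bump, while the kernel decay $|\check m_s(x)|\lesssim B(1+|x|)^{-(d+1)}$ gives the complementary bound $O(B2^{-i})$; the passage from per-scale bounds to the full square function exploits the compact spatial support of each piece, and the logarithm arises from the number of truncation scales via $\sum_i\min(A,B2^{-i})\lesssim A\log(2+B/A)$. If you wish to salvage your outline, the part to replace is precisely the modulation expansion: you need a decomposition whose pieces still see the per-scale $L^p$ hypothesis.
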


\section{The proof for variational operators associated with isotropic dilations of curves in the plane}
In this section, we will give the proof of Theorem \ref{Th-isotropic}.
\subsection{The proof of the sufficient condition when $a_0\neq 0$}
We can decompose the variational operator $V_q(\mathcal{A}f)$ into two parts: the short-variational operator $V_q^{sh}(\mathcal{A}f)$ and the long-variational operator $V_q^{dyad}(\mathcal{A}f)$. This division is based on inequality (\ref{ineq-long-short}). Let us define $\widehat{d\mu}(\xi)$ as $$\widehat{d\mu}(\xi)=\int_{\mathbb{R}}e^{-i(y\xi_1+\phi(y)\xi_2)}\eta(y)dy.$$ By applying van der corput's Lemma \ref{lemma-var der}, we can establish that $|\widehat{d\mu}(\xi)|$ is bounded by $C(1+|\xi|)^{-1/m}$, which allows us to derive the $L^p$ inequalities for  $V_q^{dyad}(\mathcal{A}f)$ using Lemma \ref {lemma-long-case}. Therefore, our focus now shifts to analyzing the short-variation component, namely  $V_q^{sh } (\mathcal { A } f )$.

We select a sufficiently small positive value for $B$ so that $(0,a_0)$ is the only flat point of the curve $(y,\phi(y))$ within the interval $(-2B, 2B)$.
Let $\rho_0>0$ be a function supported in $\{x:B/2\leq |x|\leq 2B\}$ and satisfying $\sum_{k}\rho_0(2^ky)=1$ for $y\in\mathbb{R}$.
 Define $\mathcal{A}^k=\{A_t^k\}_{t>0}$, where
$$A_t^kf(x):=\int_{\mathbb{R}} f(x_1-ty,x_2-t(2^{km}a_0+y^mb(2^{-k}y)))\rho_0(y)\eta(2^{-k}y)dy.$$
Then,
\begin{eqnarray*}
A_t(f)(x)&=&\sum_{k}\int_{\mathbb{R}} f(x_1-ty,x_2-t\phi(y))\eta(y) \rho_0(2^ky)dy\\
&=& \sum_{k}2^{-k}\int_{\mathbb{R}} f(x_1-t2^{-k}y,x_2-t(a_0+2^{-km}y^mb(2^{-k}y)))\eta(2^{-k}y) \rho_0(y)dy \\
&=& \sum_{k} 2^{-k} T_k^{-1}A_t^kT_kf(x) ,
\end{eqnarray*}
where $T_kf(x_1,x_2)=2^{-\frac{(m+1)k}{p}}f(2^{-k}x_1,2^{-mk}x_2)$.
Note that $\|T_kf\|_{L^p}=\|f\|_{L^p}$.
Therefore, it suffices to  prove the following estimate
$$
\sum_{k}2^{-k}\left\|V_q^{sh}(\mathcal{A}^k)\right\|_{L^p\rightarrow L^p}     \leq C_{p,q}
$$
for  $q>\max\{m,p/2\}$ and $p>m$.

Let  $s=-\frac{\xi_1}{\xi_2}$ and  $\delta=2^{-k}$.
By using the Fourier inversion formula, we have
\begin{eqnarray*}
A_t^k(f)(x)&=&\frac{1}{(2\pi)^2}\int_{\mathbb{R}^2}e^{ix\cdot\xi}
\int_{\mathbb{R}} e^{-it
\left(y\xi_1+(2^{km}a_0+y^mb(2^{-k}y))\xi_2\right)}\eta(2^{-k}y)\rho_0(y)dy\widehat{f}(\xi)d\xi\\
&=& \frac{1}{(2\pi)^2}\int_{\mathbb{R}^2}e^{ix\cdot\xi}\widehat{d\mu_k}(t\xi)\widehat{f}(\xi)d\xi,
\end{eqnarray*}
where
$$
\widehat{d\mu_k}(\xi):=\int_{\mathbb{R}}
e^{-i\xi_2\Phi(y,s,\delta)}\eta(2^{-k}y)\rho_0(y)dy,
$$
with $\Phi(y,s,\delta)=-ys+2^{km}a_0+y^mb(\delta y)$.
Then we have
\begin{equation}  \label{001-2}
\left(\frac{\partial }{\partial y}\right)^2\Phi(y,s,\delta) = m(m-1)y^{m-2}\left(b(\delta y)+2m\delta y b'(\delta y)
+\delta^2y^2b''(\delta y)\right).
\end{equation}
Note that $b(0) \neq0 $ and we can assume $\delta$ to be  sufficiently small.
Hence, $\left(\frac{\partial }{\partial y}\right)^2\Phi(y,s,\delta)\neq0$  for $y\in (B/2,2B)$. This result  implies that  there exists a smooth solution $\widetilde{y}=\widetilde{y}(s,\delta)$ satisfying
\begin{equation}  \label{001}
\left(\frac{\partial }{\partial y}\right)\Phi(y,s,\delta) = -s+my^{m-1}b(\delta y)+\delta y^mb'(\delta y)=0.
\end{equation}

Without loss of generality, let's  assume that $b(0)=1/m$. It follows that  $\widetilde{y}(s,0)=s^{\frac{1}{m-1}}$.
Denote $\widetilde{\Phi}(s,\delta)=\Phi\left(\widetilde{y}(s,\delta),s,\delta\right)$. By Taylor expansion about $\delta$,
 we have
\begin{equation}\label{express-Phi}
\widetilde{\Phi}(s,\delta)=-\frac{m-1}{m}s^{\frac{m}{m-1}}+2^{km}a_0+\delta R(s,\delta),
\end{equation}
where
$R(s,\delta)$ is homogeneous of degree zero in $\xi$.
By means of  the method of stationary phase, we have
\begin{equation}\label{main-fenjie}
\widehat{d\mu_k}(\xi)=e^{-i\xi_2\widetilde{\Phi}(s,\delta)}\chi_{k}
\left(s\right)  \frac{A_k(\xi)}{(1+|\xi|)^{\frac12}}  +A_k^{err}(\xi),
 \end{equation}
where $\chi_k$    is a smooth function supported in the interval $[c_k,\widetilde{c}_k]$,
for certain non-zero positive constants $c_1\leq c_k<\widetilde{c}_k\leq c_2$
deponding only on $k$.
$\{A_k(\xi)\}_k$ is contained in a bounded subset of symbols of order zero.
More precisely,
\begin{equation} \label{estimate-mainterm}
|D_{\xi}^{\alpha} A_k(\xi)|\leq C_{\alpha}  (1+|\xi|) ^{-|\alpha|}
\end{equation}
where $C_{\alpha}$ do not depend on $k$.
Furthermore, $A_k^{err}$ is a remainder term and satisfies
$$
 |D_{\xi}^{\alpha} A_k^{err}(\xi)|\leq C_{\alpha,N}  (1+|\xi|) ^{-N},
$$
where $C_{\alpha,N}$   are constants do not depend on $k$.

First, let us consider the remainder part of  (\ref{main-fenjie}).
Set  $\mathcal{A}_k^{err}=\{A_{k,t}^{err}\}_{t>0}$, where $$
A_{k,t}^{err}f(x)=\int_{\mathbb{R}^2} e^{i\xi\cdot x} A_k^{err} (t\xi) \widehat{f}(\xi)d\xi. $$
By Lemma \ref{lemma-short-remainder}, it is easy to get
$$
\|V^{sh}_q(\mathcal{A}_k^{err}f) \|_{L^p(\mathbb{R}^2)} \leq C_{p} \|f\|_{L^p(\mathbb{R}^2)}
$$
for $q\geq2$ and $p>1$.
Thus it is remained to prove
$$
\sum_{k}2^{-k}\left\|V_q^{sh}(\mathcal{A}_{main}^k)\right\|_{L^p\rightarrow L^p}     \leq C_{p,q}
$$
  for $q>\max\{m,p/2\}$ and $p>m$,
where $\mathcal{A}_{main}^{k}=\{A_{main,t}^k\}_{t>0}$,  with
$$A_{main,t}^kf(x):=\int_{\mathbb{R}^2}e^{i\xi\cdot x}A_{main}^k(t\xi)\widehat{f}(\xi)d\xi$$ and
\begin{equation} \label{express-mainterm}
 A_{main}^k(\xi) = e^{-i\xi_2\widetilde{\Phi}(s,\delta)}\chi_{k}
                    \left(s\right)  \frac{A_k(\xi)}{(1+|\xi|)^{\frac12}}.
\end{equation}

Choose $\beta_0\in C_0^{\infty}(\mathbb{R}^2)$  such that
 \begin{equation*}
 \beta_0(\xi)=\left\{\begin{array}{ll}
 1,& \mbox{if $|\xi|\leq 1$}\,;\\
 0,& \mbox{if $|\xi|\geq 2$}\,.
 \end{array}\right.
 \end{equation*}
For $k\geq1$, define
$ \beta_{k}(\xi)=\beta_{0}(2^{-k}\xi)-\beta_{0}(2^{-(k-1)}\xi). $
Obviously, supp$\beta_{k}\subset\{\xi:2^{k-1}\leq|\xi|\leq 2^{k+1}\}$  for $k\geq1$      and
$ \sum_{k\geq0} \beta_{k}(\xi)=1
$ for any $\xi\in\mathbb{R}^2$.

Let  $\mathcal{A}_j^k=\{A_{j,t}^k\}_{t>0}$, where $$
A_{j,t}^kf(x)= \int_{\mathbb{R}^2} e^{i\xi\cdot x}
  A_{main}^k(t\xi)\beta_{k}(t\xi)\widehat{f}(\xi)d\xi.
$$
Using Lemma \ref{lemma-short-remainder} again, it follows that
$$
\|V^{sh}_q(\mathcal{A}_0^{k}f) \|_{L^p(\mathbb{R}^2)} \leq C_{p} \|f\|_{L^p(\mathbb{R}^2)}
$$
for $q\geq2$ and $p>1$.
It is enough to prove
$$
\sum_{k}2^{-k}\sum_{j\geq1}\left\|V_q^{sh}(\mathcal{A}_{j}^k)\right\|_{L^p\rightarrow L^p}     \leq C_{p,q}
$$
  for $q>\max\{m,p/2\}$ and $p>m$.

\begin{lemma}  \label{lemma-pp}
 \begin{equation} \label{ineq-pp}
 \left\|V_p^{sh}(\mathcal{A}_{j}^kf)\right\|_{L^p(\mathbb{R}^2)}  \leq C_{p,\epsilon_1} 2^{k\cdot\frac{m}{p}}
 2^{-j(\frac{1}{4p}+\epsilon_1)}
\|f\|_{L^p(\mathbb{R}^2)}.
\end{equation}
\end{lemma}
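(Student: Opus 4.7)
The plan is to prove \eqref{ineq-pp} by combining the Sobolev embedding for $v_p$-variations (Lemma \ref{Sobolev embbding}) with a local smoothing estimate for the frequency-localized Fourier integral operator $A_{j,t}^k$, while extracting the factor $2^{km/p}$ from the large term $2^{km}a_0\xi_2$ appearing in the phase $\widetilde{\Phi}$.

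I would first apply Lemma \ref{Sobolev embbding} with $q=p$ to $t\mapsto A_{j,t}^k f(x)$ on each dyadic window $[2^l,2^{l+1}]$, yielding
\[
V_{p,l}(\mathcal{A}_j^k f)(x)\lesssim \|A_{j,t}^k f(x)\|_{L^p_t([2^l,2^{l+1}])}^{1/p'}\,\|\partial_t A_{j,t}^k f(x)\|_{L^p_t([2^l,2^{l+1}])}^{1/p}.
\]
Raising to the $p$-th power, summing in $l$ via H\"older with exponents $p/(p-1)$ and $p$ (so that the $\ell^p$-sums recombine into full $L^p_t((0,\infty))$ integrals), and then integrating in $x$ with another application of H\"older, one obtains
\[
\|V_p^{sh}(\mathcal{A}_j^k f)\|_{L^p(\mathbb{R}^2)}\lesssim \|A_{j,t}^k f\|_{L^p_{x,t}}^{1/p'}\,\|\partial_t A_{j,t}^k f\|_{L^p_{x,t}}^{1/p}.
\]

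The factor $2^{km/p}$ emerges by analyzing $\partial_t$: differentiating the multiplier of $A_{j,t}^k$ in $t$ pulls down $-i\xi_2\widetilde{\Phi}(s,\delta)$, whose dominant piece for large $k$ is $-i\cdot 2^{km}a_0\xi_2$ by \eqref{express-Phi}. Since multiplication by $-i\xi_2$ on the Fourier side is $-\partial_{x_2}$, modulo lower-order corrections one has $\|\partial_t A_{j,t}^k f\|_{L^p_{x,t}}\lesssim 2^{km}\|\partial_{x_2} A_{j,t}^k f\|_{L^p_{x,t}}$, so
\[
\|V_p^{sh}(\mathcal{A}_j^k f)\|_{L^p}\lesssim 2^{km/p}\,\|A_{j,t}^k f\|_{L^p_{x,t}}^{1/p'}\,\|\partial_{x_2} A_{j,t}^k f\|_{L^p_{x,t}}^{1/p}.
\]

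It then remains to control the two mixed-norm factors on the right by local smoothing. The FIO $A_{j,t}^k$ has phase $x\cdot\xi - t\xi_2\widetilde{\Phi}(s,\delta)$, and its velocity curve $\xi\mapsto\nabla_\xi[\xi_2\widetilde{\Phi}]$ reduces modulo $O(\delta)$ corrections to $u\mapsto (u,u^m/m)$ with $u=s^{1/(m-1)}$ ranging over a fixed compact subset of $(0,\infty)$, since $\chi_k$ is supported in $[c_1,c_2]$ with $c_1>0$; hence the cinematic curvature condition is satisfied \emph{uniformly} in $k$ for the isotropic case. After a Littlewood--Paley decomposition of $f$ that rescales $t$ to a unit interval, classical 2D local smoothing (Mockenhaupt--Seeger--Sogge or Wolff) combined with the fixed-time Seeger--Sogge--Stein bound yields $\|A_{j,t}^k f\|_{L^p_{x,t}}\lesssim 2^{-j/p-j\sigma(p)}\|f\|_{L^p}$ and $\|\partial_{x_2} A_{j,t}^k f\|_{L^p_{x,t}}\lesssim 2^{j/p'-j\sigma(p)}\|f\|_{L^p}$. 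The $\xi$-powers cancel in the weighted geometric mean, and choosing $\sigma(p)\geq 1/(4p)+\epsilon_1$ in the relevant range of $p$ produces the claimed gain $2^{-j(1/(4p)+\epsilon_1)}$.

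The main obstacle is obtaining a local smoothing gain $\sigma(p)$ of the required size; for the isotropic case treated here this follows from the 2D FIO theory once uniform cinematic curvature is verified. The finer mixed-norm $L^4\to L^4_x(L^2_t)$ estimate developed in Section 5 is not needed for Lemma \ref{lemma-pp} itself but will be the essential ingredient for handling the non-uniform cinematic curvature arising in Theorem \ref{Th-nonisotropic}.
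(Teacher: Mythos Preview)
Your approach is essentially the same as the paper's: Sobolev embedding (Lemma \ref{Sobolev embbding}) followed by MSS local smoothing and Littlewood--Paley summation in $l$. The bookkeeping differs slightly---you extract the factor $2^{km/p}$ by writing $\partial_t A^k_{j,t} \approx 2^{km}a_0\,\partial_{x_2}A^k_{j,t}$, whereas the paper simply bounds $|\xi_2\widetilde\Phi(s,\delta)|\lesssim 2^{km}2^j$ directly and then works with the order-zero FIO $\mathcal{F}_{k,j}$---but the two computations are equivalent.

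One point you gloss over that the paper makes explicit: to apply local smoothing with constants \emph{uniform in $k$}, the large linear term $2^{km}a_0\xi_2$ in the phase $\xi_2\widetilde\Phi(s,\delta)$ must be removed before invoking the MSS theorem. Cinematic curvature is indeed unaffected by a linear term, as you note, but the symbol bounds on the phase (on which the constants in the local smoothing estimate depend) are not uniform in $k$ while that term is present. The paper handles this by the spatial translation $x_2\mapsto x_2+t\,2^{km}a_0$, observing that $\mathcal{F}_{k,j}f(x_1,x_2,t)=\mathcal{F}_{k,j}^{**}f(x_1,x_2+t\,2^{mk}a_0,t)$ and using translation invariance of $L^p_x$. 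With this one-line fix inserted before your appeal to local smoothing, your argument is complete.
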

\begin{proof}
By using Lemma \ref{Sobolev embbding}, we see that  $\|V_{p,1}f\|_{L^p}$ is dominated by
\begin{equation} \label{V-1-estimate}
\left\|\int_{\mathbb{R}^2}e^{ix\cdot\xi}
                                        A_{main}^{k}(t\xi)
                                         \beta_j(t\xi)\widehat{f}(\xi)d\xi\right\|^{\frac{1}{p'}}_{L^p}  \times \left\|\int_{\mathbb{R}^2}e^{ix\cdot\xi}\frac{\partial}{\partial t}\left(A^k_{main}(t\xi)\beta_j(t|\xi|)\right)\widehat{f}(\xi)d\xi\right\|^{\frac{1}{p}}_{L^p}, \end{equation}
where we are taking the $L^p$  norm with respect to $\mathbb{R}^2\times[1,2]$.

Moreover,
\begin{eqnarray*} \label{0729003}
&&\frac{\partial}{\partial t}\left(A^k_{main}(t\xi)\beta_j(t|\xi|)\right)
=
e^{-it\xi_2\widetilde{\Phi}(s,\delta)} \chi_{k}\left(s\right)\times   \nonumber \\
&&\left( \left(
-i\xi_2\widetilde{\Phi}(s,\delta)\frac{A_k(t\xi)}{(1+|t\xi|)^{\frac12}}+
\frac{\partial}{\partial t} \left(
\frac{A_k(t\xi)}{(1+|t\xi|)^{\frac12}}\right)\right) \beta_{j}(t|\xi|)
+\frac{A_k(t\xi)}{(1+|t\xi|)^{\frac12}}\frac{\partial}{\partial t} \left(\beta_j(t|\xi|)
\right)
\right).
\end{eqnarray*}
From (\ref{estimate-mainterm}),(\ref{express-mainterm}) and $t|\xi|\approx |\xi| \approx |\xi_2|\approx 2^j$,
 we get
\begin{equation}\label{estimate-082801}
|\xi_2\widetilde{\Phi}(s,\delta)|\lesssim 2^{km}2^{j},\,\,\,\,\,\,\,
\left|\frac{\partial}{\partial t} \left(
\frac{A_k(t\xi)}{(1+|t\xi|)^{\frac12}}\right)\right| \lesssim 2^{-\frac{j}{2}}, \,\,\,\,\,\,\,
\mbox{and\,\,\,\,\,\,\,
$\left|\frac{A_k(t\xi)}{(1+|t\xi|)^{\frac12}}\right|\lesssim 2^{-\frac{j}{2}}$}.
\end{equation}
Thus
the express (\ref{V-1-estimate}) is dominated by  \begin{equation}\label{V-1es}
C 2^{k\cdot \frac{m}{p}}  2^{-j(\frac12-\frac{1}{p})}\left\|\mathcal{F}_{k,j}f\right\|_{L^p(\mathbb{R}^2\times[1,2])},
\end{equation}
where
$$
\mathcal{F}_{k,j}f(x,t)=  \int_{\mathbb{R}^2}e^{ix\cdot \xi}
e^{-it\xi_2\widetilde{\Phi}(s,\delta)}
a_k(\xi,t)   \beta_j(t\xi)
\widehat{f}(\xi)d\xi,
$$
and $a_k(\xi,t)$   is a symbol of order 0 in $\xi$.
More precesely, for arbitrary $t\in[1,2]$,
\begin{equation} \label{0729-1}
|D_{\xi}^{\alpha}a_k(\xi,t)|\leq C_{\alpha}(1+|\xi|)^{-|\alpha|},
\end{equation}
where $C_{\alpha}$   do not depend on $k$  and $t$.

By (\ref{express-Phi}),   it follows that $$
\xi_2\widetilde{\Phi}(s,\delta)=-\frac{m-1}{m}\frac{(-\xi_1)^{\frac{m}{m-1}}}{\xi_2^{\frac{1}{m-1}}}+2^{km}a_0\xi_2+\delta \xi_2R(s,\delta).
$$
The Hessian matrix of the main term $M(\xi):=-\frac{m-1}{m}\frac{(-\xi_1)^{\frac{m}{m-1}}}{\xi_2^{\frac{1}{m-1}}}$ is
$$
-\frac{1}{m-1}\left(\begin{array}{cc}
\frac{(-\xi_1)^{\frac{2-m}{m-1}}}{\xi_2^{\frac{1}{m-1}}} &  \frac{(-\xi_1)^{\frac{1}{m-1}}}{\xi_2^{\frac{m}{m-1}}} \\
\frac{(-\xi_1)^{\frac{1}{m-1}}}{\xi_2^{\frac{m}{m-1}}} &  \frac{(-\xi_1)^{\frac{m}{m-1}}}{\xi_2^{\frac{2m-1}{m-1}}}

\end{array} \right).
$$
Thus the Hessian matrix of $M(\xi)$ has rank one.
                                                         In \cite{1992-MSS-loacal-smoothing} and \cite{1993-MSS-loacal-smoothing},
Mockenhaupt, Seeger and Sogge proved $L^p(\mathbb{R}^2)\rightarrow L^p(\mathbb{R}^2\times[1,2])$
estimates for the operators $\mathcal{F}^*_{k,j}f$,
where
$$
\mathcal{F}^*_{k,j}f(x,t)=  \int_{\mathbb{R}^2}e^{ix\cdot \xi}
e^{-itM(\xi)}
a_k(\xi,t)   \beta_j(t\xi)
\widehat{f}(\xi)d\xi.
$$
They showed that
\begin{equation} \label{F*}
\|\mathcal{F}^*_{k,j}f\|_{L^p(\mathbb{R}^2\times[1,2])} \leq C_{\delta}  2^{j(\frac12-\frac1p-\frac{1}{2p}+\delta(p))}
\|f\|_{L^p(\mathbb{R}^2)}.
\end{equation}

 Furthermore, let $R_k(\xi)$ denote the smooth perturbation term $\xi_2\delta R(s,\delta)$.
Then
\begin{equation}\label{0729-2}
|D^{\alpha}_{\xi}R_k(\xi)|\leq C_{\alpha} (1+|\xi|)^{-|\alpha|},
\end{equation}
where  $C_{\alpha}$   do not depend on $k$.
Using (\ref{0729-1}) and (\ref{0729-2}), the proof of (\ref{F*}) in \cite{1993-MSS-loacal-smoothing}  showed that
the estimate  (\ref{F*}) is valid for oporaters $\mathcal{F}^{**}_{k,j}$ as well,
where
$$
\mathcal{F}^{**}_{k,j}f(x,t)= \int_{\mathbb{R}^2}e^{ix\cdot \xi}
e^{-it\left(M(\xi)+R_k(\xi)\right)}
a_k(\xi,t)   \beta_j(t\xi)
\widehat{f}(\xi)d\xi.
$$

It  is  not hard to check that $\mathcal{F}_{k,j}f(x_1,x_2,t)=\mathcal{F}_{k,j}^{**}f(x_1,x_2+t2^{mk}a_0,t)$.
Since the Lebesgue measure is invariant under translations, it follows that
\begin{equation} \label{F}
\|\mathcal{F}_{k,j}f\|_{L^p(\mathbb{R}^3)} \leq C_{\delta}  2^{j(\frac12-\frac1p-\frac{1}{2p}+\delta(p))}
\|f\|_{L^p(\mathbb{R}^2)}.                                                                                      \end{equation}
 Thus  it follows by (\ref{V-1es}) and (\ref{F}) that
\begin{equation}
\|V_{p,1}(\mathcal{A}_j^kf)\|_{L^p(\mathbb{R}^3)} \leq C_{\delta} 2^{k\cdot \frac{m}{p}} 2^{-j(\frac1{2p}-\delta(p))}
\|f\|_{L^p(\mathbb{R}^2)}.
\end{equation}

Since $V_{p,l}(\mathcal{A}_k^jf)=T_{-k}V_{p,1}(\mathcal{A}_k^j(T_kf))$, we have
 \begin{equation}
 \|V_{p,l}(\mathcal{A}_k^jf)\|_{L^p(\mathbb{R}^3)} \leq C_{\delta} 2^{k\cdot \frac{m}{p}} 2^{-j(\frac1{2p}-\delta(p))}
 \|f\|_{L^p(\mathbb{R}^2)}.
 \end{equation}

Let  $\Delta_l$  be the Littlewood-Paley operator in $\mathbb{R}^2$
defined by $\widehat{\Delta_lf}(\xi)=\widetilde{\beta}(2^{-l}|\xi|)\widehat{f}(\xi)$,
where $\widetilde{\beta}\in C_0^{\infty}(\mathbb{R})$ is nongegative and satisfies
$\beta_0(t|\xi|)=\beta_0(t|\xi|)\widetilde{\beta}(|\xi|)$,
for any $t\in[1,2]$.
Then  by the definition of $V_{p,l}$, we have $$
V_{p,l}(\mathcal{A}_j^kf)(x)=V_{p,l}(\mathcal{A}_j^k(\Delta_{l-j}f))(x). $$
It follows that
\begin{equation}\label{ineq-0830-001}
\left\|V_p^{sh}(\mathcal{A}_j^kf)\right\|_{L^p}\leq C_{\delta} 2^{k\cdot \frac{m}{p}} 2^{-j(\frac1{2p}-\delta(p))} \left\|\left(\sum_l|\Delta_lf|^p\right)^{\frac1p}\right\|_{L^p}
\leq C_{\delta} 2^{k\cdot \frac{m}{p}} 2^{-j(\frac1{2p}-\delta(p))} \|f\|_{L^p}.
\end{equation}
The proof will be completed if we take $\delta(p)=\frac{1}{4p}-\epsilon_1$.
\end{proof}

Next let us prove the following lemma.
\begin{lemma}
\begin{equation}\label{ineq-24082901}
\left\|V_2(\mathcal{A}_{j}^kf)\right\|_{L^4(\mathbb{R}^2)} \leq C_{\epsilon_2}2^{k\cdot\frac{m}{2}} 2^{j\epsilon_2}\|f\|_{L^4(\mathbb{R}^2)}.
\end{equation}
\end{lemma}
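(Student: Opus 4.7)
\emph{Strategy.} The plan is to mirror the proof of Lemma \ref{lemma-pp}, but use Sobolev embedding (Lemma \ref{Sobolev embbding}) with $q=2$ and an $L^4_x\to L^4_x(L^2_s)$ local smoothing estimate in place of the $L^p\to L^p$ estimate (\ref{F*}). Define
$G_l(x,s):=A^k_{j,2^ls}f(x)=\mathcal{F}^{-1}[m_s(2^l\cdot)\hat f](x)$ with $m_s(\xi)=A^k_{main}(s\xi)\beta_j(s\xi)$. The change of variables $t=2^l s$ maps $[2^l,2^{l+1}]$ to $[1,2]$, and the $2^l$-factors produced by $\partial_t = 2^{-l}\partial_s$ and the measure rescaling cancel inside Sobolev embedding, so one obtains the pointwise bound
\begin{equation*}
V_{2,l}(\mathcal{A}^k_jf)(x)\lesssim \|G_l(x,\cdot)\|_{L^2_s[1,2]}^{1/2}\,\|\partial_s G_l(x,\cdot)\|_{L^2_s[1,2]}^{1/2}.
\end{equation*}

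\emph{Reduction to two square functions.} Squaring, summing in $l$, and applying Cauchy--Schwarz first in $\ell^2_l$ and then in $L^2_x$ yields
\begin{equation*}
\|V_2^{sh}(\mathcal{A}^k_jf)\|_{L^4(\mathbb{R}^2)}^2
\lesssim \|S^{(0)}\|_{L^4}\cdot\|S^{(1)}\|_{L^4},\qquad
S^{(\nu)}(x):=\Bigl(\sum_l\|\partial_s^\nu G_l(x,\cdot)\|_{L^2_s[1,2]}^2\Bigr)^{1/2},\ \nu=0,1.
\end{equation*}
Each $\|S^{(\nu)}\|_{L^4}$ is of the form handled by Lemma \ref{multiplier-Th} with multiplier family $\{m_s\}_{s\in[1,2]}$ (for $\nu=0$) or $\{\partial_s m_s\}_{s\in[1,2]}$ (for $\nu=1$). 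The required single-scale inputs are $L^p \to L^p(L^2_s)$ bounds on $f\mapsto(\int_1^2|\mathcal{F}^{-1}[m_s(2^l\cdot)\hat f]|^2\,ds)^{1/2}$, uniformly in $l$, for $p\in\{2,4\}$.

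\emph{Single-scale bounds.} By Plancherel the $p=2$ bound holds with constant $\sim 2^{-j/2}$ for $m_s$ (since $|m_s|\lesssim 2^{-j/2}$ on the support), and with constant $\sim 2^{km}2^{j/2}$ for $\partial_s m_s$, since $\partial_s$ hits the exponential $e^{-is\xi_2\widetilde\Phi}$ to produce a factor $\xi_2\widetilde\Phi$ of size $\lesssim 2^{km}2^j$ by (\ref{estimate-082801}), combined with the weight $2^{-j/2}$. The $p=4$ bound is the $L^4\to L^4(L^2)$ local smoothing estimate for the underlying Fourier integral operator. After isotropic rescaling in $x$ to eliminate $l$ and the translation $x_2\mapsto x_2+t\cdot 2^{km}a_0$ (just as in the passage from $\mathcal{F}_{k,j}$ to $\mathcal{F}^{**}_{k,j}$ inside Lemma \ref{lemma-pp}), the phase reduces to $M(\xi)+R_k(\xi)$, where $M(\xi)=-\tfrac{m-1}{m}(-\xi_1)^{m/(m-1)}/\xi_2^{1/(m-1)}$ has rank-one Hessian and $R_k$ is $O(2^{-k})$ in $C^\infty$. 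The classical Mockenhaupt--Seeger--Sogge / Wolff local smoothing theorem for FIOs satisfying the cinematic curvature condition then yields constants $C_\epsilon 2^{-j/2}2^{j\epsilon}$ and $C_\epsilon 2^{km}2^{j/2}2^{j\epsilon}$ respectively, uniformly in $k$ and $l$.

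\emph{Assembly and main obstacle.} Plugging these bounds into Lemma \ref{multiplier-Th} (whose logarithmic factor $\log(2+B/A)=O(km+j)$ is absorbed into a slightly enlarged $\epsilon$-loss) gives
\begin{equation*}
\|V_2^{sh}(\mathcal{A}^k_jf)\|_{L^4}^2
\lesssim 2^{-j/2}\cdot 2^{km}2^{j/2}\cdot 2^{2j\epsilon}\,\|f\|_{L^4}^2
= 2^{km}\,2^{2j\epsilon}\|f\|_{L^4}^2,
\end{equation*}
and setting $\epsilon_2=2\epsilon$ yields (\ref{ineq-24082901}). The principal technical point is the uniform-in-$k$ application of the $L^4\to L^4(L^2)$ local smoothing estimate; for the isotropic dilations considered here this is routine because the dominant phase $M$ is $k$-independent with rank-one Hessian and $R_k$ is a small perturbation, so the classical cinematic-curvature local smoothing result applies directly. (The substantive new local smoothing theorem announced in Section 5 is reserved for Theorem \ref{Th-nonisotropic}, where the FIO family fails to satisfy the cinematic curvature condition uniformly.)
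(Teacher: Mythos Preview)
Your proposal is correct and follows essentially the same route as the paper: Sobolev embedding (Lemma \ref{Sobolev embbding}) with $q=2$, reduction via Cauchy--Schwarz to a square-function estimate for the underlying FIO $\mathcal{F}_{k,j}$, and then the $L^4\to L^4(L^2)$ local smoothing theorem of Mockenhaupt--Seeger--Sogge (Theorem~6.2 of \cite{1993-MSS-loacal-smoothing}) combined with Lemma~\ref{multiplier-Th} to pass from a single scale to the full $\ell^2$-sum in $l$. The only cosmetic difference is that the paper writes the scale sum implicitly as $\int_{\mathbb{R}}|\mathcal{F}_{k,j}f|^2\,\frac{dt}{t}$ and factors out $2^{km/2}$ before appealing to a single square-function bound, whereas you keep two separate square functions $S^{(0)},S^{(1)}$ and multiply their bounds at the end; the arithmetic is the same.
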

\begin{proof}
By Lemma \ref{Sobolev embbding},
\begin{equation*}
 V_{2}^{sh}(\mathcal{A}_j^kf)(x)\leq  C \left(\int_{\mathbb{R}}\left|A_{j,t}^kf(x) \right|^2 \frac{dt}{t} \right)^{\frac{1}{4}}
  \left(\int_{\mathbb{R}}\left|t\frac{\partial}{\partial t}A_{j,t}^kf(x) \right|^2 \frac{dt}{t} \right)^{\frac{1}{4}}. \end{equation*}
Thus, it follows that $\|V_2^{sh}(\mathcal{A}_j^kf)\|_{L^4}$   is bounded by
\begin{eqnarray}\label{equ-240829-11}
&&\left\|\left(\int_{\mathbb{R}}\left|\int_{\mathbb{R}^2}e^{ix\cdot\xi}A_{main}^{k}(t\xi)\beta_j(t\xi)\widehat{f}(\xi)d\xi \right|^2 \frac{dt}{t} \right)^{\frac{1}{2}} \right\|_{L^4(dx)}^{\frac{1}{2}} \times\nonumber\\
&&\left\|\left(\int_{\mathbb{R}}\left|\int_{\mathbb{R}^2}e^{ix\cdot\xi}t\frac{\partial}{\partial t}\left(A^k_{main}(t\xi)\beta_j(t|\xi|)\right)\widehat{f}(\xi)d\xi \right|^2 \frac{dt}{t} \right)^{\frac{1}{2}} \right\|_{L^4(dx)}^{\frac{1}{2}}.
\end{eqnarray}
Using (\ref{estimate-082801}), we conclude $\|V_2^{sh}(\mathcal{A}_j^kf)\|_{L^4}$ is dominated by
\begin{equation} \label{estimate-squrefuncion}
2^{k\cdot\frac{m}{2}} \left\|\left(\int_{\mathbb{R}}|\mathcal{F}_{k,j}f(\cdot,t)|^2\frac{dt}{t}\right)^{\frac12}\right\|_{L^4}. \end{equation}

Using Theorem 6.2 in \cite{1993-MSS-loacal-smoothing}, Lemma \ref{multiplier-Th} and the same method as in the proof of (\ref{F}) , we have
$$
\left\|\left(\int_{\mathbb{R}}\left|\mathcal{F}_{k,j}f(\cdot,t)\right|^2\frac{dt}{t}\right)^{\frac12} \right\|_{L^4(\mathbb{R}^2)}
\leq  C_{\epsilon_2}2^{j\epsilon_2} \|f\|_{L^4(\mathbb{R}^2)}.
$$
By (\ref{estimate-squrefuncion}), we obtain
\begin{equation*}   \label{inequ-24}
\|V_2^{sh}(\mathcal{A}_j^kf)\|_{L^4}\leq C_{\epsilon_2} 2^{k\cdot\frac{m}{2}} 2^{j\epsilon_2} \|f\|_{L^4}.
\end{equation*}
\end{proof}

Using (\ref{ineq-pp})  and (\ref{ineq-24082901}),
interpolation between $\|V_{\infty}^{sh}(\mathcal{A}_{j}^kf)\|_{L^\infty}$  and $\|V_{2}^{sh}(\mathcal{A}_{j}^kf)\|_{L^4}$ implies
\begin{equation} \label{inequ-p2p-1}
\left\|V_{\frac{p}{2}}^{sh}(\mathcal{A}_j^kf)  \right\|_{L^p}\leq  C_{\epsilon_1,\epsilon_2} 2^{k\cdot\frac{2m}{p}} 2^{-j\left(\epsilon_2\frac{4}{p}
-\epsilon_1(1-\frac{4}{p})\right)}\|f\|_{L^p},
\end{equation}
for $p\geq 4$.
Thus if we  take $\epsilon_1=\epsilon$, $\epsilon_2=\epsilon(\frac{p}{4}-1)$, then
\begin{equation} \label{inequ-p2p}
\left\|V_q^{sh}(\mathcal{A}_j^kf)  \right\|_{L^p}\leq  C_{\epsilon} 2^{k\cdot\frac{2m}{p}} 2^{-j\epsilon(\frac12-\frac{2}{p})}\|f\|_{L^p}
\end{equation}
holds for $q>p/2$.
It follows  for $p>2m$ and $q>p/2$ $$\sum_k2^{-k}\sum_{j\geq1} \left\|V_q^{sh}(\mathcal{A}_j^k)  \right\|_{L^p\rightarrow L^p} \leq C. $$

Using (\ref{ineq-pp})  and (\ref{inequ-p2p-1}),
interpolation between $\|V_{p}^{sh}(\mathcal{A}_{j}^kf)\|_{L^p}$  and $\|V_{\frac{p}{2}}^{sh}(\mathcal{A}_{j}^kf)\|_{L^p}$ implies,  for $p/2\leq q\leq p$ ,
\begin{equation} \label{inequ-qp-1}
\left\|V_q^{sh}(\mathcal{A}_j^kf)  \right\|_{L^p}\leq  C_{\tilde{\epsilon}} 2^{k\cdot\frac{m}{q}} 2^{-j\tilde{\epsilon}}\|f\|_{L^p},
\end{equation}
where $\tilde{\epsilon}=\left(\frac{1}{4p}+\epsilon_2\right)(2-\frac{p}{q})+\left(\frac{4}{p}\epsilon_2-\left(1-\frac{4}{p}\right)\epsilon_1\right)\left(\frac{p}{q}-1\right)$.
Thus, taking  $\epsilon_1=\epsilon$, $\epsilon_2=\epsilon(\frac{p}{4}-1)$,  we have for $m<p\leq 2m$ and $q>m$
$$\sum_k2^{-k}\sum_{j\geq1} \left\|V_q^{sh}(\mathcal{A}_j^k)  \right\|_{L^p\rightarrow L^p} \leq C. $$

\subsection{The proof of the sufficient condition for the case when $a_0= 0$}
The proof for this case is essentially the same as Case $a_0\neq0$, with the only distinction being in the process of demonstration.

\textit{ The estimation of  $\|V_{p}^{sh}(\mathcal{A}_j^kf)\|_{L^p}$.}
From (\ref{estimate-mainterm}), (\ref{express-mainterm}) and $t|\xi|\approx |\xi| \approx |\xi_2|\approx 2^j$, because $a_0=0$,
 we get
\begin{equation}\label{estimate-00829-1}
|\xi_2\widetilde{\Phi}(s,\delta)|\lesssim 2^{j},\,\,\,\,\,\,\,
\left|\frac{\partial}{\partial t} \left(
\frac{A_k(t\xi)}{(1+|t\xi|)^{\frac12}}\right)\right| \lesssim 2^{-\frac{j}{2}}, \,\,\,\,\,\,\,
\mbox{and
$\left|\frac{A_k(t\xi)}{(1+|t\xi|)^{\frac12}}\right|\lesssim 2^{-\frac{j}{2}}$}.
\end{equation}
Thus
the express (\ref{V-1-estimate}) is dominated by  \begin{equation}\label{V-1es-0829}
C   2^{-j(\frac12-\frac{1}{p})}\left\|\mathcal{F}_{k,j}f\right\|_{L^p(\mathbb{R}^2\times[1,2])},
\end{equation}
where
$$
\mathcal{F}_{k,j}f(x,t)=  \int_{\mathbb{R}^2}e^{ix\cdot \xi}
e^{-it\xi_2\widetilde{\Phi}(s,\delta)}
a_k(\xi,t)   \beta_j(t\xi)
\widehat{f}(\xi)d\xi,
$$
and $a_k(\xi,t)$   is a symbol of order 0 in $\xi$.

By (\ref{F*}), we obtain
$$\left\|\mathcal{F}_{k,j}f\right\|_{L^p(\mathbb{R}^2\times[1,2])}\leq C_{\delta}  2^{j(\frac12-\frac1p-\frac{1}{2p}+\delta(p))}\|f\|_{L^p(\mathbb{R}^2)}. $$
It follows that
 \begin{equation}\label{V-1es-0829-1}
\|V_{p,1}(\mathcal{A}_j^kf)\|_{L^p}\leq C   2^{-j(\frac{1}{2p}-\delta(p))}\|f\|_{L^p(\mathbb{R}^2)}.
\end{equation}

Using the Littlewood-Paley theorem again and taking $\delta(p)=\frac{1}{4p}-\epsilon_1$, we have
\begin{equation}\label{V-1es-0829-2}
\|V_{p}^{sh}(\mathcal{A}_j^kf)\|_{L^p}\leq C_{\epsilon}   2^{-j(\frac{1}{4p}+\epsilon_1)}\|f\|_{L^p(\mathbb{R}^2)}.
\end{equation}

\textit{ The estimation of  $\|V_{2}^{sh}(\mathcal{A}_j^kf)\|_{L^4}$.}
By using (\ref{estimate-00829-1}) and (\ref{equ-240829-11}), we have

$$
\|V_{2}^{sh}(\mathcal{A}_j^kf)\|_{L^4}\leq  \left\|\left(\int_{\mathbb{R}}|\mathcal{F}_{k,j}f(\cdot,t)|^2\frac{dt}{t}\right)^{\frac12}\right\|_{L^4}.
$$
Thus  Theorem 6.2 in \cite{1993-MSS-loacal-smoothing} and Lemma \ref{multiplier-Th} ensure
$$
\left\|\left(\int_{\mathbb{R}}\left|\mathcal{F}_{k,j}f(\cdot,t)\right|^2\frac{dt}{t}\right)^{\frac12} \right\|_{L^4(\mathbb{R}^2)}
\leq  C_{\epsilon_2}2^{j\epsilon_2} \|f\|_{L^4(\mathbb{R}^2)}.
$$
It can be deduced that
\begin{equation}   \label{inequ-24-90-29}
\|V_2^{sh}(\mathcal{A}_j^kf)\|_{L^4}\leq C_{\epsilon_2}  2^{j\epsilon_2} \|f\|_{L^4}.
\end{equation}

Using (\ref{V-1es-0829-2})  and (\ref{inequ-24-90-29}),
interpolation between $\|V_{\infty}^{sh}(\mathcal{A}_{j}^kf)\|_{L^\infty}$  and $\|V_{2}^{sh}(\mathcal{A}_{j}^kf)\|_{L^4}$ implies
\begin{equation} \label{inequ-p2p-1-0829}
\left\|V_{\frac{p}{2}}^{sh}(\mathcal{A}_j^kf)  \right\|_{L^p}\leq  C_{\epsilon_1,\epsilon_2}  2^{-j\left(\epsilon_2\frac{4}{p}
-\epsilon_1(1-\frac{4}{p})\right)}\|f\|_{L^p},
\end{equation}
for $p\geq 4$.
Thus if we  take $\epsilon_1=\epsilon$, $\epsilon_2=\epsilon(\frac{p}{4}-1)$, then
\begin{equation} \label{inequ-p2p-0829}
\left\|V_q^{sh}(\mathcal{A}_j^kf)  \right\|_{L^p}\leq  C_{\epsilon}  2^{-j\epsilon(\frac12-\frac{2}{p})}\|f\|_{L^p}
\end{equation}
holds for $q>p/2$.
It follows  for $p>4$ and $q>p/2$ $$\sum_k2^{-k}\sum_{j\geq1} \left\|V_q^{sh}(\mathcal{A}_j^k)  \right\|_{L^p\rightarrow L^p} \leq C. $$

Using (\ref{V-1es-0829-2})  and (\ref{inequ-p2p-1-0829}),
interpolation between $\|V_{p}^{sh}(\mathcal{A}_{j}^kf)\|_{L^p}$  and $\|V_{\frac{p}{2}}^{sh}(\mathcal{A}_{j}^kf)\|_{L^p}$ implies,  for $p/2\leq q\leq p$ ,
\begin{equation} \label{inequ-qp-1-0829}
\left\|V_q^{sh}(\mathcal{A}_j^kf)  \right\|_{L^p}\leq  C_{\epsilon}  2^{-j\epsilon}\|f\|_{L^p},
\end{equation}
where $\epsilon=\left(\frac{1}{4p}+\epsilon_2\right)(2-\frac{p}{q})+\left(\frac{4}{p}\epsilon_2-\left(1-\frac{4}{p}\right)\epsilon_1\right)\left(\frac{p}{q}-1\right)$.
Thus, taking  $\epsilon_1=\epsilon$, $\epsilon_2=\epsilon(\frac{p}{4}-1)$,  we have for $1<p\leq 4$ and $q>2$
$$\sum_k2^{-k}\sum_{j\geq1} \left\|V_q^{sh}(\mathcal{A}_j^k)  \right\|_{L^p\rightarrow L^p} \leq C. $$

\subsection{The proof of the necessary condition}
We now turn to prove the necessary condition of the statement in  Theorem \ref{Th-isotropic}  and show that the a priori
whenever $a_0=0$ or $a_0\neq 0$,
inequality
$$
\|V_q(\mathcal{A}f)\|_{L^p}\leq C_p\|f\|_{L^p}
$$
 implies that $q\geq p/2$.
  The proof of the case $a_0\neq0$ is presented here, as the proof for the case $a_0=0$ follows a similar approach.
 For the sake of simplicity, let us assume that $a_0=1$.

\subsubsection{The case for $\phi(y)=1-\frac{1}{m}y^m$}.

\textit{Preparatory work. }
We choose $\phi(y)=1-\frac{1}{m}y^m$ and denote
$
\widehat{d\mu}(\xi)=\int e^{-i\left(\xi_1y+\xi_2(1-\frac{1}{m}y^m)\right)}\eta(y)dy.
$
Theorem \ref{stationary phase} ensures
$$
\widehat{d\mu}(\xi)=\exp\left(i\left(\frac{m-1}{m}\cdot\frac{\xi_1^{\frac{m}{m-1}}}{\xi_2^{\frac{1}{m-1}}}+\xi_2\right)\right)
\chi\left(\frac{\xi_1}{\xi_2}\right)a(\xi)+B(\xi),
$$
where $\chi$ is a smooth function supported in a small neighbourhood near zero
and
\begin{equation}\label{2024-inq-D}
|D_\xi^{\alpha} a(\xi)|\leq C_{\alpha}(1+|\xi|)^{-1/2-|\alpha|}
\end{equation}
and
\begin{equation}\label{2024-inq-DB}
|D_\xi^{\alpha} B(\xi)|\leq C_{\alpha,N}(1+|\xi|)^{-N}.
\end{equation}

Denote
$$
A_{1,t}f(x)=\frac{1}{(2\pi)^2}\int_{\mathbb{R}^2}e^{ix\cdot\xi}
\exp\left(it\left(\frac{m-1}{m}\cdot\frac{\xi_1^{\frac{m}{m-1}}}{\xi_2^{\frac{1}{m-1}}}+\xi_2\right)\right)
\chi\left(\frac{\xi_1}{\xi_2}\right)a(t\xi)\widehat{f}(\xi)d\xi
           $$
and
$$
A_{2,t}f(x)=\frac{1}{(2\pi)^2}\int_{\mathbb{R}^2}e^{ix\cdot\xi}B(t\xi)\widehat{f}(\xi)d\xi.
$$
It follows that
$
A_tf(x)=A_{1,t}f(x)+A_{2,t}f(x).
$

Let $\eta_1=\frac{m-1}{m}\cdot\frac{\xi_1^{\frac{m}{m-1}}}{\xi_2^{\frac{1}{m-1}}}+\xi_2$
and $\eta_2=\frac{\xi_1}{\xi_2}$.
Then the express of the main term $A_{1,t}f$ is

\begin{eqnarray}\label{240829-express-mainterm}
&&\frac{1}{(2\pi)^2}\int_{\mathbb{R}^2}\exp\left( i\left(\frac{\eta_2}{1+\frac{m-1}{m}\eta_2^{\frac{m}{m-1}}}\eta_1x_1+
\frac{1}{1+\frac{m-1}{m}\eta_2^{\frac{m}{m-1}}}\eta_1x_2\right)\right)\times\nonumber\\
&&\,\,\,\,\,\,\,\,\,\,\,\,\,\,\,\,\,\,\,\,\,\,\,\,\,\,\,\,\,\,\,\,e^{it\eta_1}\chi(\eta_2)\widetilde{a}(t\eta_1,\eta_2)\widehat{\tilde{f}}(\eta)
\left|
\frac{\eta_1}{\left(1+\frac{m-1}{m}\eta_2^{\frac{m}{m-1}}\right)^2}
\right|
d\eta
\end{eqnarray}
and the express of the remainder term $A_{2,t}f$ is
\begin{eqnarray}\label{240829-estimate-remainterm}
&&\frac{1}{(2\pi)^2}\int_{\mathbb{R}^2}\exp\left( i\left(\frac{\eta_2}{1+\frac{m-1}{m}\eta_2^{\frac{m}{m-1}}}\eta_1x_1+
\frac{1}{1+\frac{m-1}{m}\eta_2^{\frac{m}{m-1}}}\eta_1x_2\right)\right)\times\\
&&\,\,\,\,\,\,\,\,\,\,\,\,\,\,\,\,\,\,\,\,\,\,\,\,\,\,\,\,\,\,\,\,\tilde{B}(t\eta_1,\eta_2)\widehat{\tilde{f}}(\eta)
\left|
\frac{\eta_1}{\left(1+\frac{m-1}{m}\eta_2^{\frac{m}{m-1}}\right)^2}
\right|d\eta,
\end{eqnarray}
where
$$\widehat{\tilde{f}}(\eta)=\widehat{f}\left(\frac{\eta_2}{1+\frac{m-1}{m}\eta_2^{\frac{m}{m-1}}}\eta_1,\frac{1}{1+\frac{m-1}{m}\eta_2^{\frac{m}{m-1}}}\eta_1\right),$$
$$
\tilde{a}(t\eta_1,\eta_2)=a\left(\frac{\eta_2}{1+\frac{m-1}{m}\eta_2^{\frac{m}{m-1}}}(t\eta_1),
\frac{1}{1+\frac{m-1}{m}\eta_2^{\frac{m}{m-1}}}(t\eta_1)\right)
$$
and
$$
\tilde{B}(t\eta_1,\eta_2)=B\left(\frac{\eta_2}{1+\frac{m-1}{m}\eta_2^{\frac{m}{m-1}}}(t\eta_1),
\frac{1}{1+\frac{m-1}{m}\eta_2^{\frac{m}{m-1}}}(t\eta_1)\right).
$$
Since $\eta_2$ can be assume very small, we have
\begin{equation}\label{240829-estimate-main-term}
\left|\frac{\partial^{l+k}}{(\partial \eta_1)^l(\partial \eta_2)^k}\left(\tilde{a}(t\eta_1,\eta_2)\right)\right|\leq C_{l,k} t^l(1+t|\eta_1|)^{-\frac{1}{2}-l},
\end{equation}
and
\begin{equation}\label{240829-estimate-remain-term}
\left|\frac{\partial^{l+k}}{(\partial \eta_1)^l(\partial \eta_2)^k}\left(B(t\eta_1,\eta_2)\right)\right|\leq C_{l,k,N} t^l(1+t|\eta_1|)^{-N}.
\end{equation}

\textit{Estimation of the upper bound for $\|f\|_{L^p}$.}
Choose $\widehat{\tilde{f}}_{\lambda}(\eta)=\chi_0\left(\frac{\eta_1}{\lambda}\right)\chi_1(\eta_2)
e^{-i\frac{\eta_1^2}{2\lambda}}$,  where $\chi_0\left(\eta_1\right)$ and $\chi_1\left(\eta_1\right)$ is supported in a very small interval $[B/2,2B]$ near zero.
Now we try to estimate $\|f_{\lambda}\|_{L^p}$.

After a change of variable, we have
\begin{equation}\label{equ-0915-03}
f_{\lambda}(x)=\frac{1}{2\pi}\int_{\mathbb{R}}e^{-i\frac{\eta_1^2}{2\lambda}}h(x,\eta_1)\chi_0\left(\frac{\eta_1}{\lambda}\right)\eta_1d\eta_1,
\end{equation}
where
$$
h(x,\eta_1)=\frac{1}{2\pi}\int_{\mathbb{R}}\exp\left(i\eta_1\cdot\frac{\eta_2x_1+x_2}{1+\frac{m-1}{m}\eta_2^{\frac{m}{m-1}}}\right)
\chi_1(\eta_2)\frac{1}{\left(1+\frac{m-1}{m}\eta_2^{\frac{m}{m-1}}\right)^2}d\eta_2.
$$

In order to use stationary phase theorem, let us compute
$$
\frac{\partial}{\partial\eta_2}\left(\frac{\eta_2x_1+x_2}{1+\frac{m-1}{m}\eta_2^{\frac{m}{m-1}}}\right)=
\frac{x_1\left(1-\frac{1}{m}\eta_2^{\frac{m}{m-1}}\right)-x_2\eta_2^{\frac{1}{m-1}}}{\left(\frac{\eta_2x_1+x_2}{1+\frac{m-1}{m}\eta_2^{\frac{m}{m-1}}}\right)^2}.
$$
Denote  the numerator by $L(\eta_2,s):=x_2\left(s\left(1-\frac{1}{m}\eta_2^{\frac{m}{m-1}}\right)-\eta_2^{\frac{1}{m-1}}\right)$
where $s=\frac{x_1}{x_2}$.
Since $L(0,s)=x_2s$ and $L(1,s)=x_2(\frac{m-1}{m}s-1)$,
it follows that there exists a number $\tilde{\eta}_2=\tilde{\eta}_2(s)$ satisfying
 $\tilde{\eta}_2(0)=0$
 and
$$
\frac{\partial}{\partial\eta_2}\left(\frac{\eta_2 x_1+x_2}{1+\frac{m-1}{m}\eta_2^{\frac{m}{m-1}}}\right)=0
$$
for $0\leq s<\frac{m-1}{m}$.

Using  the method of
stationary phase, we have
\begin{equation}\label{equ-091502}
h(x,\eta_1)=c_0\exp\left(i\eta_1\cdot \left(\frac{\tilde{\eta_2}(s) x_1+x_2}{1+\frac{m-1}{m}\tilde{\eta_2}(s)^{\frac{m}{m-1}}}\right)\right)
\cdot\frac{N(\eta_1,x)}{(1+\eta_1)^{\frac12}}\cdot \tilde{\chi}(s),
\end{equation}
where $N(x,\eta_1)$ is a symbol of order zero in $\eta_1$ and $\tilde{\chi}(s)$ is supported in an interval near zero.

Employing stretching transformation for (\ref{equ-0915-03}), we have
$$
f_{\lambda}(x)=\frac{\lambda^2}{2\pi}\int_{\mathbb{R}}e^{-i\frac{\lambda\eta_1^2}{2}}h(x,\lambda\eta_1)\chi_0\left(\eta_1\right)\eta_1d\eta_1.
$$
Using the method of stationary phase,
it follows that $\|f_{\lambda}\|_{L^p}
=\mathcal{O}(\lambda)$.

\textit{Estimation of the lower bound for $\|V_q(\mathcal{A}f)\|_{L^p}$.}
In order to derive the lower bound for $V_q(\mathcal{A}f)(x)$, we restrict $x$  satisfying $0<x_2\leq c_0\lambda^{-1}$ and $c_1\leq \frac{x_1}{x_2}\leq \tilde{c_1}$.

It can be deduced from (\ref{240829-estimate-remain-term}) that
$A_{2,t}f_{\lambda}(x)=\mathcal{O}(\lambda^{-N})$.
Substituting $\tilde{f}_{\lambda}$ into the expression (\ref{240829-express-mainterm})  yields,
\begin{eqnarray}\label{240830-estimate-001}
A_{1,t}f_{\lambda}(x)&=&\frac{1}{2\pi}\int_{\mathbb{R}}e^{it\eta_1}\chi_0\left(\frac{\eta_1}{\lambda}\right)e^{-i\frac{\eta_1^2}{2\lambda}}G(\eta_1,x,t)\eta_1d\eta_1\nonumber\\
&=&\frac{\lambda^2}{2\pi}e^{i\cdot\frac{\lambda t^2}{2}}\int_{\mathbb{R}}e^{-i\cdot\frac{\lambda}{2}(\eta_1-t)^2}
\chi_0(\eta_1)G(\lambda\eta_1,x,t)\eta_1d\eta_1 ,
\end{eqnarray}
where the express of $G(\lambda\eta_1,x,t)$ is
$$
\frac{1}{2\pi}\int_{\mathbb{R}}\exp\left( i(\lambda x_2)\eta_1\left(\frac{\eta_2 \cdot \frac{x_1}{x_2}}{1+\frac{m-1}{m}\eta_2^{\frac{m}{m-1}}}\right)\right)
\chi(\eta_2)\chi_1(\eta_2)\tilde{a}(t\eta_1,\eta_2)\frac{1}{\left(1+\frac{m-1}{m}\eta_2^{\frac{m}{m-1}}\right)^2}d\eta_2.
$$
By $0<\lambda x_2\leq c_0$,
$c_1\leq \frac{x_1}{x_2}\leq \tilde{c_1}$, $t\approx 1$
and (\ref{240829-estimate-main-term}), we could obtain
\begin{equation}\label{equ-0915-01}
G(\lambda\eta_1,x,t)=\frac{M(\lambda\eta_1,x,t)}{(1+\lambda\eta_1)^{\frac12}}.
\end{equation}
Employing   stationary phase theorem, (\ref{equ-0915-01}) and (\ref{240830-estimate-001}), we have
\begin{equation*}
A_{1,t}f_{\lambda}(x)
=c_0\lambda e^{i\cdot\frac{\lambda t^2}{2}}
 t\chi_0(t) M(t\lambda,x,t)
 +\mathcal{O}(1).
\end{equation*}

Denote $$I(x,t)=c_0\lambda e^{i\cdot\frac{\lambda t^2}{2}}
 t\chi_0(t) M(t\lambda,x,t).$$
For $1\leq n\leq \lambda/100$, we choose $$t_n=\left(2+\frac{2n\pi}{\lambda}\right)^{\frac12}.$$
Then $|t_n-t_{n+1}|=\mathcal{O}(\lambda^{-1})$.

By $0<x_2\leq c_0\lambda^{-1}$,
we have $
|I(x,t_n)-I(x,t_{n+1})| \gtrsim \lambda(c-C\lambda^{-1})
$.
Therefor,
$$
\left(\sum_{1\leq n<\frac{\lambda}{100}}|I(x,t_n)-I(x,t_{n+1})|^q\right)^{\frac1q}\gtrsim \lambda(c-C\lambda^{-1}).
$$
 Combining the terms above yields for $0<x_2<c_0\lambda^{-1}$ and $c_1<\frac{x_1}{x_2}<\tilde{c_1}$, we have
 $$
 V_q(\mathcal{A}f_{\lambda})(x)\gtrsim  \lambda(c-C\lambda^{-1})
 $$
and consequently $\|V_q(\mathcal{A}f_{\lambda})\|_{L^p}\gtrsim \lambda^{1+\frac1q-\frac2p}$.
Since $\|f_{\lambda}\|_{L^p}\leq C\lambda$,  this yields the restriction $q\geq p/2$.
\subsubsection{General case}
Suppose  $\phi(y)=1-\frac{1}{m}y^mb(y)$ and $b(0)=1$.
It follows that
$$
\widehat{d\mu}(\xi)=\int e^{-i\left(\xi_1y+\xi_2(1-\frac{1}{m}y^mb(y))\right)}\eta(y)dy.
$$
Denote the phase of $\widehat{d\mu}(\xi)$ by $$\Phi(y,s)=ys+1-\frac{1}{m}y^mb(y),$$ where $s=\frac{\xi_1}{\xi_2}$.
Note that $$\Phi(y,s)=ys+1-\frac{1}{m}y^m+\mathcal{O}(y^{m+1})$$
and
\begin{equation}\label{2024080801}
\partial_1\Phi(y,s)=s-y^{m-1}+\mathcal{O}(y^m).
\end{equation}
It follows that the resolution of above function can be written as
$$
 \tilde{y}=s^{\frac{1}{m-1}}+\mathcal{O}(s^{\frac{m}{m-1}}).
$$

By the method of stationary phase, we have
$$
\widehat{d\mu}(\xi)=\exp\left(i\left(\frac{m-1}{m}\cdot\frac{\xi_1^{\frac{m}{m-1}}}{\xi_2^{\frac{1}{m-1}}}+\xi_2+\xi_2\mathcal{O}(s^{\frac{m}{m-1}})\right)\right)
\chi\left(\frac{\xi_1}{\xi_2}\right)a(\xi)+B(\xi),
$$
where $\chi$ is a smooth function supported in a small neighbourhood near zero
and
$$|D_\xi^{\alpha} a(\xi)|\leq C_{\alpha}(1+|\xi|)^{-1/2-|\alpha|}$$
and
$$|D_\xi^{\alpha} B(\xi)|\leq C_{\alpha,N}(1+|\xi|)^{-N}.$$

It follows that the express of the  main term $A_{1,t}f$ is changed to
$$
\frac{1}{(2\pi)^2}\int_{\mathbb{R}^2}e^{ix\cdot\xi}
\exp\left(it\left(\frac{m-1}{m}\cdot\frac{\xi_1^{\frac{m}{m-1}}}{\xi_2^{\frac{1}{m-1}}}+\xi_2+\xi_2\mathcal{O}(s^{\frac{m}{m-1}})\right)\right)
\chi\left(\frac{\xi_1}{\xi_2}\right)a(t\xi)\widehat{f}(\xi)d\xi.
           $$

Let $\eta_1=\frac{m-1}{m}\cdot\frac{\xi_1^{\frac{m}{m-1}}}{\xi_2^{\frac{1}{m-1}}}+\xi_2$
and $\eta_2=\frac{\xi_1}{\xi_2}$ as well.
%
Then  the express of the  main term $A_{1,t}f$ become
\begin{eqnarray}\label{240830-mainterm}
&&\frac{1}{(2\pi)^2}\int_{\mathbb{R}^2}\exp\left( i\left(\frac{\eta_2}{1+\frac{m-1}{m}\eta_2^{\frac{m}{m-1}}}\eta_1x_1+
\frac{1}{1+\frac{m-1}{m}\eta_2^{\frac{m}{m-1}}}\eta_1x_2\right)\right)\times\nonumber\\
&&\exp\left({it(\eta_1+\frac{1}{1+\frac{m-1}{m}\eta_2^{\frac{m}{m-1}}}\eta_1\mathcal{O}(\eta_2^\frac{m}{m-1}))}\right)\chi(\eta_2)\tilde{a}(t\eta_1,\eta_2)\widehat{\tilde{f}}(\eta)
\left|
\frac{\eta_1}{\left(1+\frac{m-1}{m}\eta_2^{\frac{m}{m-1}}\right)^2}
\right|
d\eta.
\end{eqnarray}
Comparing the express above and (\ref{240829-express-mainterm}),
the disturbance term $\mathcal{O}(\eta_2^\frac{m}{m-1})$ could be ignored,
since $\eta_2$ is a very small number.
Thus we  complete the proof.

\section{The proof for variational operators associated with non-isotropic dilations of curves in the plane}
\subsection{The proof of the sufficient condition}
We proceed as the proof of Theorem \ref{Th-isotropic}.
Denote
$$\widehat{d\mu}(\xi)=\int_{\mathbb{R}}e^{-i(y\xi_1+y^mb(y)\xi_2)}\eta(y)dy.$$
By van der corput's lemma, we have $|\widehat{d\mu}(\xi)|\leq C(1+|\xi|)^{-1/m}$.
It ensures  us only to consider the short-variation   $V_q^{sh}(\mathcal{B}f)$.

We choose $B>0$  enough small so that the interval $(-2B, 2B)$ does not contain any other flat point of the curve $(y,\phi(y))$ beside  $(0,0)$.
Define $\rho_0\in C_0^{\infty}(\mathbb{R})$ as before.
Put  $$B_t^kf(x):=\int f(x_1-ty,x_2-t^my^mb(2^{-k}y))\rho_0(y)\eta(2^{-k}y)dy.$$
As in the argument in the proof of Theorem \ref{Th-isotropic},
 it suffices to prove
$$
\sum_{k}2^{-k}\left\|V_q^{sh}(\mathcal{B}^k)\right\|_{L^p\rightarrow L^p}     \leq C_{p,q}
$$
for $q>\max\{2,p/2\}$ and $p>2$.

Using the Fourier inversion formula, we have
\begin{eqnarray*}
B_t^k(f)(x)&=&\frac{1}{(2\pi)^2}\int_{\mathbb{R}^2}e^{ix\cdot\xi}
\int_{\mathbb{R}} e^{-i
\left(ty\xi_1+t^my^mb(2^{-k}y)\xi_2\right)}\eta(2^{-k}y)\rho_0(y)dy\widehat{f}(\xi)d\xi\\
&=& \frac{1}{(2\pi)^2}\int_{\mathbb{R}^2}e^{ix\cdot\xi}\widehat{d\mu_k}(\delta_t\xi)\widehat{f}(\xi)d\xi,
\end{eqnarray*}
where
$$
\widehat{d\mu_k}(\delta_t\xi):=\int_{\mathbb{R}}
e^{-it^m\xi_2\Phi(y,s,\delta)}\eta( \delta y)\rho_0(y)dy,
$$
with $s=-\frac{\xi_1}{t^{m-1}\xi_2} $, $\delta=2^{-k}$, and $\Phi(y,s,\delta)=-ys+y^{m}b(\delta y)$.

Then we have
\begin{equation}  \label{001--2}
\left(\frac{\partial }{\partial y}\right)^2\Phi(0,s,\delta) = 0, \,\,\, \cdots,\,\,\,\left(\frac{\partial }{\partial y}\right)^{m-1}\Phi(0,s,\delta) = 0,\,\,\,
\left(\frac{\partial }{\partial y}\right)^m\Phi(0,s,\delta)=m!b(0)  \neq 0.
\end{equation}
Note that we can assume $k$ is sufficient large and $|y|\thickapprox B$ which is enough small.
Hence, $\left(\frac{\partial }{\partial y}\right)^2\Phi(y,s,\delta) \neq0$ and it  implies that  there exists a smooth solution $\tilde{y}=\tilde{y}(s,\delta)$
of the equation
$
\frac{\partial }{\partial y}\Phi(y,s,\delta) = 0. $
Write $\tilde{\Phi}(s,\delta)=\Phi(\tilde{y},s,\delta)$.
In \cite[p.29]{LIwenjuan}, we know the phase function can be written as
\begin{equation}\label{ineq-li01}
-t^{m}\xi_2\tilde{\Phi}(s,\delta)=\left(\frac{1}{m^{\frac{m}{m-1}}}-\frac{1}{m^{\frac{1}{m}}}\right)
\left(-\frac{m\xi_1^m}{\xi_2}\right)^{\frac{1}{m-1}}-\frac{\delta^nb^{(n)}(0)}{t^nn!}\left(-\frac{\xi_1}{\xi_2^{\frac{n+1}{n+m}}}\right)^{\frac{m+n}{m-1}}+R(t,\xi,\delta),
\end{equation}
where $R(t,\xi,\delta)$ is homogeneous of degree one in $\xi$ and has at least $n+1$ power of $\delta$.

By using the method of stationary phase, we have
\begin{equation}\label{main-fenjie--2}
\widehat{d\mu_k}(\delta_t\xi)=e^{-it^m\xi_2\tilde{\Phi}(s,\delta)}\chi_{k}
\left(s\right)  \frac{B_k(\delta_t\xi)}{(1+|\delta_t\xi|)^{\frac12}}  +B_k^{err}(\delta_t\xi),
 \end{equation}
where $\chi_k$    is a smooth function supported in the interval $[c_k,\tilde{c}_k]$ ,
for certain non-zero positive constants $c_1\leq c_k<\tilde{c}_k\leq c_2$
deponding only on $k$.
$\{B_k(\xi)\}_k$ is contained in a bounded subset of symbols of order zero.
More precisely,
\begin{equation} \label{estimate-mainterm--2}
|D_{\xi}^{\alpha} B_k(\xi)|\leq C_{\alpha}  (1+|\xi|) ^{-|\alpha|}
\end{equation}
where $C_{\alpha}$ do not depend on $k$.
Furthermore, $B_k^{err}$ is a remainder term and satisfies
$$
 |D_{\xi}^{\alpha} B_k^{err}(\xi)|\leq C_{\alpha,N}  (1+|\xi|) ^{-N},
$$
where $C_{\alpha,N}$   are constants do not depend on $k$.

Choose $\beta_k$, $k=0,1,2,\ldots$  as before and
denote
\begin{equation} \label{express-mainterm--2}
B_{j,t}^kf(x):=\int_{\mathbb{R}^2}e^{i\xi\cdot x}e^{-it^m\xi_2\tilde{\Phi}(s,\delta)}\chi_{k}
                    \left(s\right) \beta_{j}(\delta_t\xi) \frac{B_k(\delta_t\xi)}{(1+|\delta_t\xi|)^{\frac12}}\widehat{f}(\xi)d\xi.
 \end{equation}
Using the same reason  as in Theorem \ref{Th-isotropic}, we reduce the problem to the task of proving
$$
\sum_{k}2^{-k}\sum_{j\geq1}\left\|V_q^{sh}(\mathcal{B}_{j}^k)\right\|_{L^p\rightarrow L^p}     \leq C_{p,q}
$$
  for $q>\max\{2,p/2\}$ and $p>2$,  where  $\mathcal{B}_k^j=\{B_{j,t}^k\}_{t>0}$.

Denote $B_{j,main}^{k}(\delta_t\xi)=e^{-it^m\xi_2\tilde{\Phi}(s,\delta)}\chi_{k}
                    \left(s\right) \beta_{j}(\delta_t\xi) \frac{B_k(\delta_t\xi)}{(1+|\delta_t\xi|)^{\frac12}}$.
It can be deduced from
$|\delta_t\xi|\thickapprox 2^j$ that
\begin{equation}\label{estimate-0901-2}
|B_{j,main}^{k}(\delta_t\xi)|\leq C 2^{-\frac{j}{2}}.
\end{equation}

 Moreover,
\begin{eqnarray} \label{0729003--2}
&&t\frac{\partial}{\partial t}\left(B_{j,main}^{k}(\delta_t\xi)\right)\nonumber\\
&&=e^{-it^m\xi_2\tilde{\Phi}(s,\delta)}\left(-it\frac{\partial}{\partial t}\left(-t^m\xi_2\Phi(\tilde{y}(s),s,\delta)\right)\chi_k\left(s\right)   \frac{B_k(\delta_t\xi)}{(1+|\delta_t\xi|)^{\frac12}}\right)\nonumber \\
&&\,\,\,\,\,\,\,\,+e^{-it^m\xi_2\tilde{\Phi}(s,\delta)}t\frac{\partial}{\partial t}
\left(\chi_k\left(s\right)\right)\frac{B_k(\delta_t\xi)}{(1+|\delta_t\xi|)^{\frac12}}\nonumber\\
&&\,\,\,\,\,\,\,\,+e^{-it^m\xi_2\tilde{\Phi}(s,\delta)}
\chi_k\left(s\right)t\frac{\partial}{\partial t}\left(\frac{B_k(\delta_t\xi)}{(1+|\delta_t\xi|)^{\frac12}}\right).
\end{eqnarray}
Note that $|\delta_t\xi|\thickapprox |t\xi_1|\thickapprox|t^m\xi_2|\thickapprox 2^j$,
(\ref{ineq-li01}), (\ref{express-mainterm--2}), (\ref{estimate-mainterm--2}) ensure

$$\left|-it\frac{\partial}{\partial t}\left(-t^m\xi_2\tilde{\Phi}(s,\delta)\right)\right|\lesssim \delta^n2^{\frac{j}{2}},$$
$$
\left|t\frac{\partial}{\partial t} \left(
\frac{B_k(t\xi)}{(1+|t\xi|)^{\frac12}}\right)\right| \lesssim 2^{-\frac{j}{2}}, $$
and
$$
\left|t\frac{\partial}{\partial t}\left(\chi_k\left(s\right)\right)\right|\thickapprox 1.
$$
Thus,
\begin{equation} \label{estimate-090101}
\left|t\frac{\partial}{\partial t}\left(B_{j,main}^{k}(\delta_t\xi)\right)\right|\leq C\left(
\delta^{n}2^{\frac{j}{2}}+2^{-\frac{j}{2}}\right).
\end{equation}

The following lemma can be deduced from (\ref{estimate-0901-2}) and (\ref{estimate-090101}).
\begin{lemma}\label{lem-24-0909}
We have
\begin{eqnarray}
 &&\left\|V_{\infty}^{sh}(\mathcal{B}_{j}^kf)\right\|_{L^{\infty}(\mathbb{R}^2)}  \leq C
\|f\|_{L^{\infty}(\mathbb{R}^2)},\label{ineq-00--2-0903}\\
 &&\left\|V_2^{sh}(\mathcal{B}_{j}^kf)\right\|_{L^2(\mathbb{R}^2)}  \leq C (2^{-\frac{n}{2}\cdot k}+2^{-\frac{j}{2}})
\|f\|_{L^2(\mathbb{R}^2)},\label{ineq-22--2-0903}\\
&&\left\|V_2^{sh}(\mathcal{B}_{j}^kf)\right\|_{L^4(\mathbb{R}^2)} \leq C_{\epsilon_1,\epsilon_2}2^{j\epsilon_1}2^{k\epsilon_2}\|f\|_{L^4(\mathbb{R}^2)},             \label{ineq-24--2-0903}\\
&&\|V_4^{sh}(\mathcal{B}_j^kf)\|_{L^4(\mathbb{R}^2)}\leq C_{\epsilon_3,\epsilon_4} 2^{j\left(-\frac{1}{8}+\epsilon_3\right)}\cdot 2^{kn\left(\frac{1}{8}+\epsilon_4\right)}\|f\|_{L^4(\mathbb{R}^2)},\,\,\,\,\,\,\,\,\mbox{for $j>9kn$},\label{ineq-44--1-0903}\\
&&\|V_4^{sh}(\mathcal{B}_j^kf)\|_{L^4(\mathbb{R}^2)}\leq C 2^{-\frac{nk}{2}}\|f\|_{L^4(\mathbb{R}^2)},\,\,\,\,\,\,\,\,\mbox{for $j\leq9kn$}.\label{ineq-44--2-0903}
\end{eqnarray}
\end{lemma}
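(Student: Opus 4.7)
The five estimates in Lemma \ref{lem-24-0909} are obtained by combining the pointwise symbol bounds (\ref{estimate-0901-2}) and (\ref{estimate-090101}) with the Sobolev embedding Lemma \ref{Sobolev embbding}, Plancherel's theorem, and the mixed-norm $L^4_x(L^2_t)$ local smoothing estimate that the paper develops in Section 5 for Fourier integral operators whose cinematic curvature condition degenerates as $k\to\infty$. The overall strategy mirrors the template used in Section 3 for the isotropic case: reduce each variational norm to an $L^2_t$ (or $L^4_t$) norm on $B_{j,t}^k f$ and its $t$-derivative, then apply the appropriate Fourier-analytic estimate together with the size information encoded in the symbol.

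For (\ref{ineq-00--2-0903}), note that $V_\infty^{sh}(\mathcal{B}_j^k f)\leq 2\sup_t|B_{j,t}^k f|$, and check that $B_{j,t}^k$ has a convolution kernel of uniformly bounded $L^1$-norm; this follows from stationary phase applied to the oscillatory Fourier integral whose amplitude has size $2^{-j/2}$ and whose phase has a single nondegenerate critical point (a fact already exploited in (\ref{main-fenjie--2})). For (\ref{ineq-22--2-0903}), I would apply Lemma \ref{Sobolev embbding} at $q=2$ to each dyadic interval $[2^\ell,2^{\ell+1}]$, and then use Cauchy--Schwarz in $x$ and $\ell$ to reduce the $L^2$-norm of $V_2^{sh}$ to the product
\begin{equation*}
C\,\|B_{j,\cdot}^k f\|_{L^2_{x,t}(dxdt/t)}^{1/2}\,\|t\partial_t B_{j,\cdot}^k f\|_{L^2_{x,t}(dxdt/t)}^{1/2}.
\end{equation*}
Plancherel in $x$, combined with the fact that the frequency-localization $|\delta_t\xi|\approx 2^j$ confines $t$ to a unit interval in log-scale for each fixed $\xi$, yields $\|B_{j,\cdot}^k f\|_{L^2_{x,t}}\lesssim 2^{-j/2}\|f\|_2$ from (\ref{estimate-0901-2}) and $\|t\partial_t B_{j,\cdot}^k f\|_{L^2_{x,t}}\lesssim(2^{-nk+j/2}+2^{-j/2})\|f\|_2$ from (\ref{estimate-090101}). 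Multiplying and taking the square root gives the asserted factor $2^{-nk/2}+2^{-j/2}$.

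For (\ref{ineq-24--2-0903}) the same Sobolev-$q=2$ device reduces the $L^4_x$-norm of $V_2^{sh}$ to the product of $L^4_x$-norms of the two square functions $(\int|B_{j,t}^k f|^2\,dt/t)^{1/2}$ and $(\int|t\partial_t B_{j,t}^k f|^2\,dt/t)^{1/2}$; bounding these is precisely the content of Section 5's local smoothing estimate, and the $2^{j\epsilon_1}2^{k\epsilon_2}$ loss is inherent because the classical Mockenhaupt--Seeger--Sogge framework does not apply with constants uniform in $k$. Finally, for (\ref{ineq-44--1-0903}) and (\ref{ineq-44--2-0903}) I would exploit the pointwise comparison $V_4\leq V_2^{1/2}V_\infty^{1/2}$ combined with H\"older's inequality, interpolating among the $L^2$, $L^4$ and $L^\infty$ bounds already in hand: for the small-frequency regime $j\leq 9kn$ the estimate (\ref{ineq-22--2-0903}) contributes the clean $2^{-nk/2}$ gain directly via $\|V_4^{sh}\|_{L^4}^4\lesssim\|V_\infty^{sh}\|_\infty^2\|V_2^{sh}\|_{L^2}^2$, whereas for $j>9kn$ one must interpolate (\ref{ineq-22--2-0903}) against (\ref{ineq-24--2-0903}) to trade a negative power of $j$ for a positive power of $nk$; the threshold $9kn$ is the balance point at which the $\delta^n\cdot 2^{j/2}$ contribution in (\ref{estimate-090101}) becomes dominant over $2^{-j/2}$ after passage through this interpolation, optimizing the exponents to yield $2^{-j/8+j\epsilon_3}2^{kn/8+kn\epsilon_4}$. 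The main obstacle in the whole argument is the Section 5 estimate itself: it requires a genuine extension of classical local smoothing to Fourier integral operators that do not uniformly satisfy the cinematic curvature condition, which is precisely the obstruction flagged in the introduction and organization of the paper.
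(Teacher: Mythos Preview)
Your treatment of (\ref{ineq-00--2-0903}), (\ref{ineq-22--2-0903}) and (\ref{ineq-24--2-0903}) follows the paper's approach in spirit. One omission worth flagging for (\ref{ineq-24--2-0903}): Theorem \ref{estimate-important--2} in Section~5 requires $\lambda>\delta^{-2}$, i.e.\ $j>2nk$; the paper handles the complementary range $j\le 2nk$ by a separate, elementary argument (Plancherel at $L^2$, an $L^\infty$ kernel bound, and Riesz interpolation), which you do not mention.

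The genuine gap is in your route to (\ref{ineq-44--1-0903}). You propose to reach the $2^{-j/8}$ decay by combining the pointwise inequality $V_4\le V_2^{1/2}V_\infty^{1/2}$ with an ``interpolation of (\ref{ineq-22--2-0903}) against (\ref{ineq-24--2-0903}).'' But (\ref{ineq-22--2-0903}) and (\ref{ineq-24--2-0903}) are both bounds on the \emph{same} operator $V_2^{sh}$, one at $L^2$ and one at $L^4$; interpolating them cannot improve the $L^4$ endpoint beyond what (\ref{ineq-24--2-0903}) already gives. If instead you feed (\ref{ineq-00--2-0903}) and (\ref{ineq-22--2-0903}) into $\|V_4^{sh}\|_4^4\le\|V_\infty^{sh}\|_\infty^2\|V_2^{sh}\|_2^2$, you obtain only
\[
\|V_4^{sh}(\mathcal{B}_j^kf)\|_{L^4}\ \lesssim\ \bigl(2^{-nk/2}+2^{-j/2}\bigr)^{1/2},
\]
which for $j>9kn$ is $\approx 2^{-nk/4}$, a bound \emph{uniform in $j$}. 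The resulting sum $\sum_{j>9kn}2^{-nk/4}$ diverges, so this approach cannot close. No combination of (\ref{ineq-00--2-0903})--(\ref{ineq-24--2-0903}) alone produces $j$-decay at $L^4$.

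The paper instead applies the Sobolev embedding Lemma \ref{Sobolev embbding} with $q=4$ (not $q=2$), which bounds $\|V_{4,1}(\mathcal{B}_j^kf)\|_{L^4}$ by
\[
\Bigl\|\textstyle\int e^{ix\cdot\xi}B_{j,main}^k(\delta_t\xi)\widehat f\,d\xi\Bigr\|_{L^4_{x,t}}^{3/4}
\Bigl\|\textstyle\int e^{ix\cdot\xi}\,t\partial_t\bigl(B_{j,main}^k(\delta_t\xi)\bigr)\widehat f\,d\xi\Bigr\|_{L^4_{x,t}}^{1/4},
\]
and then invokes an $L^4_{x,t}$ local smoothing estimate (inequality (2.78) in \cite{LIwenjuan}). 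That estimate is a genuinely stronger input than the $L^4_x(L^2_t)$ bound of Section~5: it supplies real decay in $j$ at the $L^4$ level, and is what produces the exponent $-\tfrac18$. This ingredient is missing from your outline and cannot be manufactured from the other four inequalities.
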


Using (\ref{ineq-00--2-0903})  and (\ref{ineq-24--2-0903}),
interpolation between $\|V_{\infty}^{sh}(\mathcal{B}_{j}^kf)\|_{L^\infty}$  and $\|V_{2}^{sh}(\mathcal{B}_{j}^kf)\|_{L^4}$ implies
\begin{equation}\label{equ-24090401}
\left\|V_{\frac{p}{2}}^{sh}(\mathcal{B}_j^kf)  \right\|_{L^p}\leq  C_{\epsilon_1,\epsilon_2}2^{j\theta\epsilon_1}2^{k\theta\epsilon_2} \|f\|_{L^p}.
\end{equation}
Using  (\ref{ineq-44--1-0903}) and (\ref{equ-24090401}),
interpolation between $\|V_{\frac{p_1}{2}}^{sh}(\mathcal{B}_{j}^kf)\|_{L^{p_1}}$  and $\|V_{4}^{sh}(\mathcal{B}_{j}^kf)\|_{L^4}$ implies, for $j>9nk$ and $p>4$,
\begin{equation}
\left\|V_{q}^{sh}(\mathcal{B}_j^kf)  \right\|_{L^p}\leq  C_{\epsilon_3,\epsilon_4}2^{j\theta\left(-\frac{1}{8}+\epsilon_3\right)} 2^{kn\theta\left(\frac18+\epsilon_4\right)}\|f\|_{L^p},
\end{equation}
where
$\frac{1}{p}=\frac{1-\theta}{p_1}+\frac{\theta}{4}$ and $\frac{1}{q}=\frac{2(1-\theta)}{p_1}+\frac{\theta}{4}$.
It follows that
$$
\sum_{k}\sum_{j>9nk}2^{-k}\left\|V_{q}^{sh}(\mathcal{B}_j^kf)  \right\|_{L^p}\leq C_{\epsilon_3,\epsilon_4}\sum_{k}2^{-k+kn\theta(-1+\epsilon_3+\epsilon_4)}\|f\|_{L^p}\leq C\|f\|_{L^p},
$$
for $p>4$ and $q>\frac{p}{2}$.
Using  (\ref{ineq-44--2-0903}) and (\ref{equ-24090401}),
interpolation between $\|V_{\frac{p_1}{2}}^{sh}(\mathcal{B}_{j}^kf)\|_{L^{p_1}}$  and $\|V_{4}^{sh}(\mathcal{B}_{j}^kf)\|_{L^4}$ implies, for $j\leq 9nk$ and $p>4$,
\begin{equation}
\left\|V_{q}^{sh}(\mathcal{B}_j^kf)  \right\|_{L^p}\leq  C 2^{-\frac{n\theta k}{2}} \|f\|_{L^p},
\end{equation}
where
$\frac{1}{p}=\frac{1-\theta}{p_1}+\frac{\theta}{4}$ and $\frac{1}{q}=\frac{2(1-\theta)}{p_1}+\frac{\theta}{4}$.
It follows that
\begin{equation}
\sum_{k}\sum_{j\leq 9nk}2^{-k}\left\|V_{q}^{sh}(\mathcal{B}_j^kf)  \right\|_{L^p}\leq C_{\epsilon_3,\epsilon_4}\sum_{k}9nk2^{-k-\frac{n\theta k}{2}}\|f\|_{L^p}\leq C\|f\|_{L^p},
\end{equation}
for $p>4$ and $q>\frac{p}{2}$.

Using (\ref{ineq-22--2-0903})  and (\ref{ineq-24--2-0903}),
interpolation between $\|V_{2}^{sh}(\mathcal{B}_{j}^kf)\|_{L^2}$  and $\|V_{2}^{sh}(\mathcal{B}_{j}^kf)\|_{L^4}$ implies
\begin{equation}\label{equ-24090401-000}
\left\|V_{2}^{sh}(\mathcal{B}_j^kf)  \right\|_{L^p}\leq  C_{\epsilon_1,\epsilon_2} 2^{\left(2-\frac4p\right)\epsilon_1j}2^{\left(2-\frac4p\right)\epsilon_2k} \|f\|_{L^p}.
\end{equation}
Using (\ref{ineq-44--1-0903})  and (\ref{equ-24090401-000}),
interpolation between $\|V_{2}^{sh}(\mathcal{B}_{j}^kf)\|_{L^{p_1}}$  and $\|V_{4}^{sh}(\mathcal{B}_{j}^kf)\|_{L^4}$ implies,
for $j> 9nk$ and $2<p\leq 4$,
\begin{equation}
\left\|V_{q}^{sh}(\mathcal{B}_j^kf)  \right\|_{L^p}\leq  C_{\epsilon_1,\epsilon_2,\epsilon_3,\epsilon_4} 2^{\left(2-\frac4p\right)\epsilon_1(1-\theta)j}
2^{\left(2-\frac4p\right)\epsilon_2(1-\theta)k}2^{\theta j(-\frac18+\epsilon_3)}2^{\theta kn(\frac18+\epsilon_4)} \|f\|_{L^p},
\end{equation}
where
$\frac{1}{q}=\frac{1-\theta}{2}+\frac{\theta}{4}$ and $\frac{1}{p}=\frac{1-\theta}{p_1}+\frac{\theta}{4}$.
It follows that
$$
\sum_{k}2^{-k}\sum_{j> 9nk} \leq C_{\epsilon_1,\epsilon_2,\epsilon_3,\epsilon_4} 2^{\left(2-\frac4p\right)\epsilon_1(1-\theta)j}
2^{\left(2-\frac4p\right)\epsilon_2(1-\theta)k}2^{-k+\theta nk(-1+\epsilon_3+\epsilon_4)}\leq C \|f\|_{L^p},
$$
for $2<p\leq4$ and $q>2$.
Using (\ref{ineq-44--2-0903})  and (\ref{equ-24090401-000}),
interpolation between $\|V_{2}^{sh}(\mathcal{B}_{j}^kf)\|_{L^{p_1}}$  and $\|V_{4}^{sh}(\mathcal{B}_{j}^kf)\|_{L^4}$ implies,
for $j> 9nk$ and $2<p\leq 4$,
\begin{equation}
\left\|V_{q}^{sh}(\mathcal{B}_j^kf)  \right\|_{L^p}\leq  C_{\epsilon_1,\epsilon_2} 2^{\left(2-\frac4p\right)\epsilon_1(1-\theta)j}
2^{\left(2-\frac4p\right)\epsilon_2(1-\theta)k}
2^{-\theta\frac{kn}{2}} \|f\|_{L^p},
\end{equation}
where
$\frac{1}{q}=\frac{1-\theta}{2}+\frac{\theta}{4}$ and $\frac{1}{p}=\frac{1-\theta}{p_1}+\frac{\theta}{4}$.
It follows that
$$
\sum_{k}2^{-k}\sum_{j\leq 9nk} \leq C_{\epsilon_1,\epsilon_2}9nk 2^{\left(2-\frac4p\right)\epsilon_1(1-\theta)j}
2^{\left(2-\frac4p\right)\epsilon_2(1-\theta)k}2^{-k-\theta\frac{nk}{2}}\leq C \|f\|_{L^p},
$$\vspace{20pt}
for $2<p\leq4$ and $q>2$.

\textit{ Proof of Lemma \ref{lem-24-0909}.}
(\ref{ineq-00--2-0903}) has been obtained in \cite{LIwenjuan}(see the proof of (2.52)).
By using Lemma \ref{Sobolev embbding}, we see that  $\|V_{2,1}(\mathcal{B}_j^kf)\|_{L^2}$ is dominated by
\begin{equation} \label{V-1-estimate--2}
\left\|\int_{\mathbb{R}^2}e^{ix\cdot\xi}
                                        B_{j,main}^{k}(\delta_t\xi)
                                         \widehat{f}(\xi)d\xi\right\|^{\frac{1}{2}}_{L^2}  \times \left\|\int_{\mathbb{R}^2}e^{ix\cdot\xi}t\frac{\partial}{\partial t}\left(B^k_{j,main}(\delta_t\xi)\right)\widehat{f}(\xi)d\xi\right\|^{\frac{1}{2}}_{L^2}, \end{equation}
where we are taking the $L^2$  norm with respect to $\mathbb{R}^2\times[1,2]$.
As in the proof of Lemma 2.6 in \cite{LIwenjuan}, we have $\|V_{2,1}(\mathcal{B}_j^kf)\|_{L^2}\leq C \left(2^{-\frac{nk}{2}}+2^{-\frac{j}{2}}\right)
\|f\|_{L^2}$.
As in the argument for (\ref{ineq-0830-001}), (\ref{ineq-22--2-0903})
is easy to get by using Littlewood-Paley theorem.

Using Lemma \ref{Sobolev embbding} again, we see that
$\|V_{4,1}(\mathcal{B}_j^kf)\|_{L^4}$ is dominated by
\begin{equation} \label{V-1-estimate--2}
\left\|\int_{\mathbb{R}^2}e^{ix\cdot\xi}
                                        B_{j,main}^{k}(\delta_t\xi)
                                         \widehat{f}(\xi)d\xi\right\|^{\frac{3}{4}}_{L^4}  \times \left\|\int_{\mathbb{R}^2}e^{ix\cdot\xi}t\frac{\partial}{\partial t}\left(B^k_{j,main}(\delta_t\xi)\right)\widehat{f}(\xi)d\xi\right\|^{\frac{1}{4}}_{L^4}, \end{equation}
where we are taking the $L^4$  norm with respect to $\mathbb{R}^2\times[1,2]$.
As in the proof  of inequality (2.78) in \cite{LIwenjuan}, we have $\|V_{4,1}(\mathcal{B}_j^kf)\|_{L^2}\leq C_{\epsilon_3,\epsilon_4} 2^{j\left(-\frac{1}{8}+\epsilon_3\right)}\cdot 2^{kn\left(\frac{1}{8}+\epsilon_4\right)}\|f\|_{L^4(\mathbb{R}^2)}$.
Then it can be deduced that (\ref{ineq-44--1-0903}) is valid.

(\ref{ineq-44--2-0903}) will be obtained by employing the M.Riesz interpolation theorem between (\ref{ineq-00--2-0903}) and (\ref{ineq-22--2-0903}).
As in the argument for (\ref{ineq-0830-001}), (\ref{ineq-22--2-0903})
is easy to get by using the  Littlewood-Paley theorem.

It is remained  for us to prove (\ref{ineq-24--2-0903}).
By Lemma \ref{Sobolev embbding},
\begin{equation*}
 V_{2}^{sh}(\mathcal{B}_j^kf)(x)\leq  C \left(\int_{\mathbb{R}}\left|B_{j,t}^kf(x) \right|^2 \frac{dt}{t} \right)^{\frac{1}{4}}
  \left(\int_{\mathbb{R}}\left|t\frac{\partial}{\partial t}B_{j,t}^kf(x) \right|^2 \frac{dt}{t} \right)^{\frac{1}{4}}. \end{equation*}
Thus it follows that $\|V_2^{sh}(\mathcal{B}_j^kf)\|_{L^4}$   is bounded by
\begin{eqnarray}\label{ineq-240909}
&&\left\|\left(\int_{\mathbb{R}}\left|\int_{\mathbb{R}^2}e^{ix\cdot\xi}B_{main}^{k}(\delta_t\xi)\beta_j(|\delta_t\xi|)\widehat{f}(\xi)d\xi \right|^2 \frac{dt}{t} \right)^{\frac{1}{2}} \right\|_{L^4(dx)}^{\frac12} \times\nonumber\\
&&\left\|\left(\int_{\mathbb{R}}\left|\int_{\mathbb{R}^2}e^{ix\cdot\xi}t\frac{\partial}{\partial t}\left(B^k_{main}(\delta_t\xi)\beta_j(|\delta_t\xi|)\right)\widehat{f}(\xi)d\xi \right|^2 \frac{dt}{t} \right)^{\frac{1}{2}} \right\|_{L^4(dx)}^{\frac12}.
\end{eqnarray}
Moreover,
\begin{eqnarray} \label{0729003--2}
&&t\frac{\partial}{\partial t}\left(B^k_{main}(\delta_t\xi)\beta_j(|\delta_t\xi|)\right)\nonumber\\
&&=e^{-it^m\xi_2\Phi(\tilde{y}(s),s,\delta)}\left(-it\frac{\partial}{\partial t}\left(-t^m\xi_2\Phi(\tilde{y}(s),s,\delta)\right)\chi_k\left(\frac{\xi_1}{t^{m-1}\xi_2}\right)   \frac{B_k(\delta_t\xi)}{(1+|\delta_t\xi|)^{\frac12}}\right)\nonumber \\
&&\,\,\,\,\,\,\,\,+e^{-it^m\xi_2\Phi(\tilde{y}(s),s,\delta)}t\frac{\partial}{\partial t}
\left(\chi_k\left(\frac{\xi_1}{t^{m-1}\xi_2}\right)\right)\frac{B_k(\delta_t\xi)}{(1+|\delta_t\xi|)^{\frac12}}\nonumber\\
&&\,\,\,\,\,\,\,\,+e^{-it^m\xi_2\Phi(\tilde{y}(s),s,\delta)}
\chi_k\left(\frac{\xi_1}{t^{m-1}\xi_2}\right)t\frac{\partial}{\partial t}\left(\frac{B_k(\delta_t\xi)}{(1+|\delta_t\xi|)^{\frac12}}\right).
\end{eqnarray}
From  (\ref{ineq-li01}), (\ref{express-mainterm--2}), (\ref{estimate-mainterm--2}) and $|\delta_t\xi|\thickapprox |t\xi_1|\thickapprox|t^m\xi_2|\thickapprox 2^j$,
 we get
$|-it\frac{\partial}{\partial t}\left(-t^m\xi_2\Phi(\tilde{y}(s),s,\delta)\right)|\lesssim \delta^n2^{j}+2^{-\frac{j}{2}}$,
$
\left|t\frac{\partial}{\partial t} \left(
\frac{B_k(t\xi)}{(1+|t\xi|)^{\frac12}}\right)\right| \lesssim 2^{-\frac{j}{2}}, $
and
$
\left|t\frac{\partial}{\partial t}\left(\chi_k\left(\frac{\xi_1}{t^{m-1}\xi_2}\right)\right)\right|\thickapprox 1.
$

We conclude $\|V_2^{sh}(\mathcal{B}_j^kf)\|_{L^4}$ is dominated by
\begin{equation} \label{estimate-squrefuncion--2} \left(2^{-\frac{nk}{2}}+2^{-\frac{j}{2}}\right)\left\|\left(\int_{\mathbb{R}}|\mathcal{Q}_{k,j}f(\cdot,t)|^2\frac{dt}{t}\right)^{\frac12}\right\|_{L^4}, \end{equation}
where
$$
 \mathcal{Q}_{k,j}f(x,t)= \int_{\mathbb{R}^2}e^{ix\cdot\xi}e^{it^m\xi_2\Phi(\tilde{y}(s),s,\delta)}a_k(\delta_t\xi)\beta_j(|\delta_t\xi|)\widehat{f}(\xi)d\xi
$$
and $a_k(\xi)$ is a symbol of order zero in $\xi$.
More precisely,
\begin{equation}
|D_{\xi}^{\alpha}a_k(\xi)| \leq C_{\alpha}  (1+|\xi|)^{-|\alpha|}, \end{equation}
where $C_{\alpha}$ do not depend on $k$.

We will prove the following theorem in section 5.
\begin{theorem} \label{estimate-important--2}
Suppose $q(\xi)$ is homogeneous of degree one and
$\beta\in C_0^{\infty}(\mathbb{R})$ such that
$\supp \beta\subset (1/2,4)$ and $\beta(r)=1$ for
$1\leq r\leq2$.
Let
$$\mathcal{Q}_{\lambda,\delta}f(x,t)=\int_{\mathbb{R}^2}e^{i\left(x\cdot\xi+t\delta q(\xi)+\delta^{2}R(\xi,t,\delta)\right)}a(\xi,t)
\beta(\lambda^{-1}|\delta_t\xi|)\widehat{f}(\xi)d\xi,$$
where $a(\xi,t)$ is a symbol of order zero in $\xi$
and $R(\xi,t,\delta)$ is homogeneous of degree one in $\xi$.
Then,
for all $j>1$ and $\lambda>\delta^{-2}$,
\begin{equation}\label{ineq-082102}
\left\|\mathcal{Q}_{\lambda,\delta}f(x,t) \right\|_{L_x^4(\mathbb{R}^2,L_t^2([1,2]))}
\leq  C_{\epsilon,\epsilon'}\lambda^{\epsilon}\delta^{-\frac12-\epsilon'} \|f\|_{L^4(\mathbb{R}^2)}.
\end{equation}
\end{theorem}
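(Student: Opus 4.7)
The plan is to reduce Theorem~\ref{estimate-important--2} to the classical square-function mixed-norm local smoothing estimate of Mockenhaupt--Seeger--Sogge (Theorem 6.2 of \cite{1993-MSS-loacal-smoothing}, the same tool already invoked in Lemma~\ref{lem-24-0909}) via an anisotropic rescaling of both the frequency and spatial variables that restores the cinematic curvature of the associated canonical relation uniformly in $\delta$. The hypothesis $\lambda>\delta^{-2}$ is precisely what is needed to make the lower-order perturbation $\delta^{2}R$ harmless after this rescaling.

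First I would substitute $\xi=\delta^{-1}\eta$ and $x=\delta y$ in the integral defining $\mathcal{Q}_{\lambda,\delta}f$ and set $g(y):=f(\delta y)$. Since $q$ and $R$ are homogeneous of degree one in the frequency variable, the principal phase $t\delta q(\xi)$ transforms into the classical $tq(\eta)$, the perturbation $\delta^{2}R(\xi,t,\delta)$ becomes $\delta R(\eta,t,\delta)$, and the cutoff $\beta(\lambda^{-1}|\delta_t\xi|)$ becomes $\beta(\lambda^{-1}\delta^{-1}|\delta_t\eta|)$, localized to the non-isotropic annulus $|\delta_t\eta|\approx\delta\lambda$ (which is $\gtrsim\delta^{-1}$ thanks to $\lambda>\delta^{-2}$). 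A direct computation using $\widehat f(\delta^{-1}\eta)=\delta^{2}\widehat g(\eta)$ produces
\begin{equation*}
\mathcal{Q}_{\lambda,\delta}f(\delta y,t)=Tg(y,t):=\int_{\mathbb{R}^{2}}e^{i(y\cdot\eta+tq(\eta)+\delta R(\eta,t,\delta))}a(\delta^{-1}\eta,t)\beta(\lambda^{-1}\delta^{-1}|\delta_t\eta|)\widehat g(\eta)\,d\eta,
\end{equation*}
and a routine $L^{p}$-rescaling gives the relations $\|\mathcal{Q}_{\lambda,\delta}f\|_{L^{4}_{x}L^{2}_{t}}=\delta^{1/2}\|Tg\|_{L^{4}_{y}L^{2}_{t}}$ and $\|g\|_{L^{4}}=\delta^{-1/2}\|f\|_{L^{4}}$.

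Next I would verify that $T$ is an FIO satisfying the cinematic curvature hypothesis of Mockenhaupt--Seeger--Sogge uniformly in $\delta$. On the support $|\delta_t\eta|\approx\delta\lambda$, the $\eta$-Hessian of the main phase $tq(\eta)$ has its usual rank-one size $(\delta\lambda)^{-1}$, while the $\eta$-Hessian of the perturbation $\delta R$ has size $\delta\cdot(\delta\lambda)^{-1}=\lambda^{-1}$, a factor $\delta$ smaller; hence the total Hessian is a uniform-in-$\delta$ perturbation of the model and cinematic curvature is preserved. The amplitude $a(\delta^{-1}\eta,t)\beta(\lambda^{-1}\delta^{-1}|\delta_t\eta|)$ lies in $S^{0}$ at the frequency scale $\delta\lambda$, uniformly in $\delta$. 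Applying Theorem 6.2 of \cite{1993-MSS-loacal-smoothing} together with the vector-valued multiplier result (Lemma~\ref{multiplier-Th}) then yields
\begin{equation*}
\|Tg\|_{L^{4}_{y}L^{2}_{t}([1,2])}\leq C_{\epsilon}(\delta\lambda)^{\epsilon}\|g\|_{L^{4}(\mathbb{R}^{2})},
\end{equation*}
and unwinding the two rescalings produces $\|\mathcal{Q}_{\lambda,\delta}f\|_{L^{4}_{x}L^{2}_{t}}\leq C_{\epsilon}(\delta\lambda)^{\epsilon}\|f\|_{L^{4}}$, which is stronger than (and in particular implies) the claimed $C_{\epsilon,\epsilon'}\lambda^{\epsilon}\delta^{-1/2-\epsilon'}\|f\|_{L^{4}}$.

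The hard part will be the rigorous justification of the uniform-in-$\delta$ applicability of the Mockenhaupt--Seeger--Sogge estimate in the presence of the $t$-dependent non-isotropic cutoff $\beta(\lambda^{-1}\delta^{-1}|\delta_t\eta|)$: its $\eta$-derivatives acquire factors of $t^{m-1}$ coming from differentiation along the $\eta_{2}$-direction of $\delta_{t}$, so one must either enlarge the symbol class to tolerate such $t$-dependent weights, or decompose the non-isotropic annulus $\{|\delta_t\eta|\approx\delta\lambda\}$ into $O(1)$ isotropic Euclidean balls of radius $\delta\lambda$ on which $\delta_t$ is bi-Lipschitz with $m$-dependent constants, and reassemble the pieces after the classical estimate has been applied to each. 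A secondary difficulty is the non-compact support of the $0$-homogeneous symbol $a(\xi,t)$, which necessitates a standard non-stationary phase truncation in $\xi$ to produce rapid decay in $\lambda$ away from the frequency annulus.
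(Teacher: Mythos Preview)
Your rescaling approach is correct and takes a genuinely different route from the paper's. The paper (Section~5) does \emph{not} rescale; instead it reproves the Mockenhaupt--Seeger--Sogge argument from scratch for the degenerate phase $x\cdot\xi+t\delta q(\xi)+\delta^{2}R$, carrying the factor $\delta$ through every step: an angular decomposition $\mathcal{Q}_{\lambda,\delta}=\sum_{\nu}\mathcal{Q}_{\lambda,\delta}^{\nu}$, a second radial decomposition in $q(\xi)$ indexed by $j$, two orthogonality lemmas obtained by integration by parts in $t$ (where the lower bound $|\partial_{t}\phi|\gtrsim\delta\cdot(\text{gap})$ is responsible for the eventual $\delta^{-1/2}$ loss), and finally a reduction to C\'ordoba's square-function estimate. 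Your observation is that the substitution $\xi=\delta^{-1}\eta$, $x=\delta y$ converts the phase to $y\cdot\eta+tq(\eta)+\delta R(\eta,t,\delta)$ at frequency scale $\mu=\delta\lambda$, so that $\partial_{t}\phi=q(\eta)+O(\delta|\eta|)$ has $\eta$-Hessian of the correct size $\mu^{-1}$ uniformly in $\delta$, and MSS Theorem~6.2 applies as a black box. This yields $(\delta\lambda)^{\epsilon}$, which is strictly sharper than the paper's $\lambda^{\epsilon}\delta^{-1/2-\epsilon'}$. What the paper's approach buys is robustness (it would survive a perturbation not removable by a single dilation) and an explicit audit of where the $\delta^{-1/2}$ appears; what yours buys is brevity and a better constant.

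Two small remarks on your write-up. First, the ``hard part'' you flag is milder than you suggest: since $t\in[1,2]$, the map $\eta\mapsto\delta_{t}\eta$ is bi-Lipschitz with constants depending only on $m$, so $\beta(\mu^{-1}|\delta_{t}\eta|)$ is already an $S^{0}$ symbol at scale $\mu$ with $t$-derivatives uniformly bounded---no further decomposition of the annulus is needed. Second, Lemma~\ref{multiplier-Th} is irrelevant here: the theorem concerns a single dyadic frequency scale with $t\in[1,2]$, so there is no summation over $j$ to perform; you only need the single-scale $L^{4}_{x}L^{2}_{t}$ estimate of MSS. Finally, note that both your argument and the paper's tacitly require $\mathrm{rank}\,q''_{\xi\xi}=1$ (used in the paper via Lemma~\ref{lemma-0817}), which the theorem statement omits but which holds in the application.
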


Using Theorem \ref{estimate-important--2}, Lemma \ref{estimate-squrefuncion} and  (\ref{estimate-squrefuncion--2}),
we have, for $j>2nk$,
\begin{equation}   \label{inequ-24--2}
\|V_2^{sh}(\mathcal{B}_j^kf)\|_{L^4}\leq C_{\epsilon_1,\epsilon_2}2^{j\epsilon_1}2^{k\epsilon_2} \|f\|_{L^4}.
\end{equation}

When $j\leq 2k$, by using Plancherel theorem, we  obtain
\begin{equation}\label{equ-09090001}
\left\|\left(\int_{1}^2|\mathcal{Q}_{k,j}f(\cdot,t)|^2dt\right)^{\frac12}\right\|_{L^2}\leq C\|f\|_{L^2}.
\end{equation}
As in the proof of (2.55) in \cite{LIwenjuan}, we have
\begin{equation}\label{equ-09090002}
\left\|\sup_{1\leq t\leq 2}|\mathcal{Q}_{k,j}f(\cdot,t)|\right\|_{L^{\infty}}\leq C 2^{\frac{j}{2}}\|f\|_{L^{\infty}}.
\end{equation}
Employing the M. Riesz interpolation theorem between (\ref{equ-09090001}) and (\ref{equ-09090002}),
we have
$$
\left\|\left(\int_{1}^2|\mathcal{Q}_{k,j}f(\cdot,t)|^4dt\right)^{\frac14}\right\|_{L^4}\leq C 2^{\frac{j}{4}} \|f\|_{L^4}.
$$
Thus by H\"{o}lder inequality and (\ref{ineq-240909}), we get (\ref{ineq-24--2-0903}) for $j\leq 2nk$.

\subsection{The proof of the necessary condition}

We now turn to prove the necessary  part of the statement in  Theorem \ref{Th-nonisotropic}  and show that the a priori
inequality
$$
\|V_q(\mathcal{B}f)\|_{L^p}\leq C_p\|f\|_{L^p}
$$
implies that $q\geq p/2$.

\subsubsection{Preliminary work}
We choose $y=\frac{1}{m}y^m\left(1+y^nb(y)\right)$, where $b(0)=1$.
Then
$$
\widehat{d\mu}(\delta_t\xi)=\int e^{-it^m\xi_2\Phi(y,s)}\eta(y)dy,
$$
where $\Phi(y,s)=-sy+\frac{1}{m}y^m(1+y^nb(y))$ with $s=-\frac{\xi_1}{t^{m-1}\xi_2}$.

Note that
$$
(\partial_1)^2\Phi(0,s)=\cdots=(\partial_1)^{m-1}\Phi(0,s)=0,\,\,(\partial_1)^{m}\Phi(0,s)=(m-1)!\neq 0,
$$
and  $|y|$ can be assume  enough small. Thus  when $y\neq0$, we have $(\partial_1)^2\Phi(y,s)\neq 0$ and we
could
obtain $\tilde{y}=\tilde{y}(s)$ such that
$$
\partial_1\Phi(y,s)=-s+y^{m-1}+\frac{m+n}{m}y^{m+n-1}b(y)+\frac{1}{m}y^{m+n}b'(y)=0.
$$

Let $L=s^{\frac{1}{m-1}}$ and $\Psi(y,L)=\Phi(y,L^{m-1})$. It follows that $y^*(L)=\tilde{y}(L^{m-1})$ is the solution of the equation
$$
(\partial_1\Psi)(y,L)=-L^{m-1}+y^{m-1}+\frac{m+n}{m}y^{m+n-1}b(y)+\frac{1}{m}y^{m+n}b'(y)=0.
$$
It can be deduced that
\begin{equation}\label{ineq-24091201}
y^*(0)=0.
\end{equation}

Denote $\tilde{\Psi}_1(L)=(\partial_1\Psi)(y^*(L),L)$.
Then
$$
(D_L)^{m-1}\tilde{\Psi}_1(L)\left.\right|_{L=0}=-(m-1)!+(m-1)!\left((y^*)'(0)\right)^{m-1}=0.
$$
It follows that
\begin{equation}\label{ineq-24091202}
(y^*)'(0)=1.
\end{equation}

Moreover,
\begin{eqnarray*}
&&(D_L)^{m+n-1}\tilde{\Psi}_1(L)\left.\right|_{L=0}\\
&&=(m-1)!\cdot(m-1)\left((y^*)'(0)\right)^{m-2}(y^*)^{(n+1)}(0)+\frac{(m+n)!}{m}\left((y^*)'(0)\right)^{m+n-1}b(0)=0.
\end{eqnarray*}
It can be deduced that
\begin{equation}\label{ineq-24091203}
(y^*)^{(n+1)}(0)=-\frac{(m+n)!}{(m-1)m!}.
\end{equation}

Denote $\tilde{\Psi}(L)=\Psi(y^*(L),L)$.
Then (\ref{ineq-24091201}), (\ref{ineq-24091202}) and (\ref{ineq-24091203}) ensure that
$$
\tilde{\Psi}(0)=D(\tilde{\Psi})(0)=\cdots=D^{m-1}(\tilde{\Psi})(0)=0,\,\,\,\,\,\,\,\,D^{m}(\tilde{\Psi})(0)=-(m-1)!,
$$
and
$$
D^{m+1}\tilde{\Psi}(0)=\cdots=D^{m+n-1}(\tilde{\Psi})(0)=0,\,\,\,\,D^{m+n}(\tilde{\Psi})(0)=\frac{(m+n)!}{m(m-1)}.
$$
By Taylor expansion,  we have
$$
\tilde{\Psi}(L)=-\frac{1}{m}L^m+\frac{1}{m(m-1)}L^{m+n}+\mathcal{O}(L^{m+n+1}).
$$
It can be deduced that
$$
\tilde{\Phi}(s)=-\frac{1}{m}s^{\frac{m}{m-1}}+\frac{1}{m(m-1)}s^{\frac{m+n}{m-1}}+\mathcal{O}(s^{\frac{m+n+1}{m-1}}).
$$

Using the method of stationary phase, we have
\begin{eqnarray*}
&&\widehat{d\mu}(\delta_t\xi)\\
&&=\exp\left(i\left(-\frac{1}{m}\cdot\frac{\xi_1^{\frac{m}{m-1}}}{\xi_2^{\frac{1}{m-1}}}+
\frac{1}{m(m-1)}\frac{\xi_1^{\frac{m+n}{m-1}}}{t^n\xi_2^{\frac{n+1}{m-1}}}+\mathcal{O}\left(\frac{\xi_1^{\frac{m+n+1}{m-1}}}{t^{n+1}\xi_2^{\frac{n+2}{m-1}}}\right)\right)\right)
\chi\left(s\right)a(\delta_t\xi)\\
&&\,\,\,\,\,\,\,\,\,\,\,\,+B(\delta_t\xi),
\end{eqnarray*}
where $\chi$ is a smooth function supported in a small neighbourhood near zero and
$$|D_\xi^{\alpha} a(\xi)|\leq C_{\alpha}(1+|\xi|)^{-1/2-|\alpha|}$$
and  $$|D_\xi^{\alpha} B(\xi)|\leq C_{\alpha,N}(1+|\xi|)^{-N}.$$

Thus, we can write
$$
B_tf(x)=B_{1,t}f(x)+B_{2,t}f(x),
$$
where
\begin{eqnarray*}
&&B_{1,t}f(x)\\
&&=\frac{1}{(2\pi)^2}\int_{\mathbb{R}^2}\exp\left(i\left(-\frac{1}{m}\cdot\frac{\xi_1^{\frac{m}{m-1}}}{\xi_2^{\frac{1}{m-1}}}+
\frac{1}{m(m-1)}\frac{\xi_1^{\frac{m+n}{m-1}}}{t^n\xi_2^{\frac{n+1}{m-1}}}+\mathcal{O}
\left(\frac{\xi_1^{\frac{m+n+1}{m-1}}}{t^{n+1}\xi_2^{\frac{n+2}{m-1}}}\right)\right)\right)\\
&&\,\,\,\,\,\,\,\,\,\,\,\,\times
\chi\left(s\right)a(\delta_t\xi)\widehat{f}(\xi)e^{ix\cdot\xi}d\xi,
\end{eqnarray*}
and
$$
B_{2,t}f(x)=\frac{1}{(2\pi)^2}\int_{\mathbb{R}^2}e^{ix\cdot\xi}B(\delta_t\xi)\widehat{f}(\xi)d\xi.
$$

Let $\eta_1=\frac{1}{m(m-1)}\frac{\xi_1^{\frac{m+n}{m-1}}}{\xi_2^{\frac{n+1}{m-1}}}$
and $\eta_2=\frac{\xi_1}{\xi_2}$.
Then after the change of variable, we have
\begin{eqnarray*}
B_{1,t}f(x)&=&\frac{1}{(2\pi)^2}\int_{\mathbb{R}^2}\exp\left( \frac{i\eta_1x_2}{m(m-1)}\left(\frac{x_1/x_2}{\eta_2^{\frac{n+1}{m-1}}}+
\frac{1}{\eta_2^{\frac{m+n}{m-1}}}\right)\right)\chi(\frac{\eta_2}{t^{m-1}})\\
&&\,\,\,\,\,\,\times \exp\left(i\left(\frac{\eta_1}{t^n}+\mathcal{O}\left(\frac{\eta_1\eta_2^{\frac{1}{m-1}}}{t^{n+1}}\right)\right)\right)\tilde{a}\left(\frac{1}{t^n}\left(t\eta_1,t^{-m+1}\eta_2\right)\right)
\\
&&\,\,\,\,\,\,\,\,\,\,\,\,\times\exp\left({-i\frac{1}{m^2(m-1)}\frac{\eta_1}{\eta_2^{\frac{n}{m-1}}}}\right)\widehat{\tilde{f}}(\eta)\left|
\frac{1}{m^2(m-1)^2}\frac{\eta_1}{\eta_2^{\frac{2(m+n)}{m-1}}}
\right|
d\eta
\end{eqnarray*}
and
\begin{eqnarray*}
B_{2,t}f(x)&=&\frac{1}{(2\pi)^2}\int_{\mathbb{R}^2}\exp\left( \frac{i\eta_1x_2}{m(m-1)}\left(\frac{x_1/x_2}{\eta_2^{\frac{n+1}{m-1}}}+
\frac{1}{\eta_2^{\frac{m+n}{m-1}}}\right)\right)\tilde{B}\left(\frac{1}{t^n}\left(t\eta_1,t^{-m+1}\eta_2\right)\right)\\
&&\,\,\,\,\times
\exp\left({-i\frac{1}{m^2(m-1)}\frac{\eta_1}{\eta_2^{\frac{n}{m-1}}}}\right)\widehat{\tilde{f}}(\eta)
\left|
\frac{1}{m^2(m-1)^2}\frac{\eta_1}{\eta_2^{\frac{2(m+n)}{m-1}}}
\right|
d\eta,
\end{eqnarray*}
where
$$
\tilde{a}\left(\frac{1}{t^n}(t\eta_1,t^{-m+1}\eta_2)\right)=a\left(\frac{t\eta_1}{t^n(t^{-m+1}\eta_2)^{\frac{n+1}{m-1}}},
\frac{t^m\eta_1}{t^n(t^{-m+1}\eta_2)^{\frac{m+n}{m-1}}}\right)
$$
and
$$
\tilde{B}\left(\frac{1}{t^n}(t\eta_1,t^{-m+1}\eta_2)\right)=B\left(\frac{t\eta_1}{t^n(t^{-m+1}\eta_2)^{\frac{n+1}{m-1}}},
\frac{t^m\eta_1}{t^n(t^{-m+1}\eta_2)^{\frac{m+n}{m-1}}}\right).
$$

Since $\eta_2\thicksim t^{m-1}$ and $t\thicksim1$, we have
$$
\left|\frac{\partial^{l+k}}{(\partial \eta_1)^l(\partial \eta_2)^k}\left(\tilde{a}\left(\frac{1}{t^n}(t\eta_1,t^{-m+1}\eta_2)\right)\right)\right|\leq C_{l,k} (1+|\eta_1|)^{-\frac{1}{2}-l},
$$
and
$$
\left|\frac{\partial^{l+k}}{(\partial \eta_1)^l(\partial \eta_2)^k}\left(\tilde{B}\left(\frac{1}{t^n}(t\eta_1,t^{-m+1}\eta_2)\right)\right)\right|\leq C_{l,k,N} (1+|\eta_1|)^{-N}.
$$
\subsubsection{Estimate of the lower bound for $\frac{\|V_q(\mathcal{B}f)\|_{L^p}}{\|f\|_{L^p}}$}
Choose $\widehat{\tilde{f}}_{\lambda}(\eta)=\chi_0\left(\frac{\eta_1}{\lambda}\right)\chi_1(\eta_2)
e^{-i\frac{\eta_1^2}{2\lambda}}$ where $\chi_0\left(\eta_1\right)$ and $\chi_1\left(\eta_1\right)$ is supported in a very small interval $[B/2,2B]$ near zero.
Here still applying  the method of stationary phase and we can obtain  $\|f_{\lambda}\|_{L^p}=\mathcal{O}(\lambda)$.

In order to derive the lower bound for $V_q(\mathcal{B}f)(x)$, we also restrict $x$  in the region $0<x_2\leq c_0\lambda^{-1}$ and $c_1\leq \frac{x_1}{x_2}\leq \tilde{c_1}$.
After a change of variable we may write
\begin{equation*}
B_{1,t}f(x)=\frac{\lambda^2}{2\pi}\int_{\mathbb{R}}e^{-i\frac{\lambda\eta_1^2}{2}}
\chi_0\left(\eta_1\right)
G(\lambda\eta_1,x,t)e^{i\frac{\lambda\eta_1}{t^n}}\eta_1
d\eta_1,
\end{equation*}
where
\begin{eqnarray*}
G(\lambda\eta_1,x,t)&=&\frac{1}{2\pi}\int_{\mathbb{R}^2}\exp\left( \frac{i(\lambda x_2)\eta_1}{m(m-1)}\left(\frac{x_1/x_2}{\eta_2^{\frac{n+1}{m-1}}}+
\frac{1}{\eta_2^{\frac{m+n}{m-1}}}\right)\right)\chi\left(\frac{\eta_2}{t^{m-1}}\right)\chi_1(\eta_2)\\
&&\,\,\,\,\,\,\,\,\times\exp\left(i\left(\frac{\lambda\eta_1}{t^n}+\mathcal{O}\left(\frac{\lambda\eta_1\eta_2^{\frac{1}{m-1}}}{t^{n+1}}\right)\right)\right)
\tilde{a}\left(\frac{1}{t^n}(\lambda\eta_1,t^{-m+1}\eta_2)\right)\\
&&\,\,\,\,\,\,\,\,\,\,\,\,\,\,\,\,\times
\exp\left({-i\frac{1}{m^2(m-1)}\frac{\lambda\eta_1}{\eta_2^{\frac{n}{m-1}}}}\right)\frac{1}{m^2(m-1)^2}\frac{1}{\eta_2^{\frac{2(m+n)}{m-1}}}
d\eta_2.
\end{eqnarray*}
Since $0<\lambda x_2\leq c_0$, $c_1\leq \frac{x_1}{x_2}\leq \tilde{c_1}$, $\eta_2\approx1$ and $t\approx 1$, we have
\begin{equation}\label{equ-0915-04}
|G(\lambda\eta_1,x,t)|= C\lambda^{-\frac{1}{2}}\widetilde{G}(\lambda,\eta_1,x,t),
\end{equation}
where $\widetilde{G}(\lambda,\eta_1,x,t)$ is a symbol order of zero in $\eta_1$.


Let $\tilde{t}=\frac{1}{t^n}$. Then
\begin{equation}
B_tf_{\lambda}(x)
=\frac{\lambda^2}{2\pi}e^{i\cdot\frac{\lambda \tilde{t}^2}{2}}\int_{\mathbb{R}}e^{-i\cdot\frac{\lambda}{2}(\eta_1-\tilde{t})^2}
\chi_0(\eta_1)G(\lambda\eta_1,x,\tilde{t}^{-\frac{1}{n}})\eta_1d\eta_1 .
\end{equation}
Employing the method of  stationary phase,
we have
\begin{equation*}
B_tf_{\lambda}(x)
=c\lambda e^{i\cdot\frac{\lambda \tilde{t}^2}{2}}
 \tilde{t}\chi_0(\tilde{t})\widetilde{G}(\lambda,\tilde{t},x,\tilde{t}^{-\frac{1}{n}})+\mathcal{O}(1).
\end{equation*}

Denote
\begin{equation*}
I(x,\tilde{t})=c\lambda e^{i\cdot\frac{\lambda \tilde{t}^2}{2}}
 \tilde{t}\chi_0(\tilde{t})\widetilde{G}(\lambda,\tilde{t},x,\tilde{t}^{-\frac{1}{n}}).
\end{equation*}
Choose $$\tilde{t}_n=\left(2+\frac{2n\pi}{\lambda}\right)^{\frac12}.$$
Then $|\tilde{t}_n-\tilde{t}_{n+1}|=\mathcal{O}(\lambda^{-1})$ and  $
|I(x,t_n)-I(x,t_{n+1})| \gtrsim \lambda(c-C\lambda^{-1})
$.
Therefor,
$$
\left(\sum_{1\leq n <\frac{\lambda}{100}}|I(x,t_n)-I(x,t_{n+1})|^q\right)^{\frac1q}\gtrsim \lambda^{1+\frac1q}(c-C\lambda^{-1}).
$$
 It implies
 $$
 V_q(\mathcal{A}f_{\lambda})(x)\gtrsim  \lambda^{1+\frac1q}(c-C\lambda^{-1}).
 $$

Recall that $x_2\leq \lambda^{-1}$ and $c_1x_2\leq x_1\leq \tilde{c}_1x_2$.
Then we obtain $\|V_q(\mathcal{A}f_{\lambda})\|_{L^p}\gtrsim \lambda^{1+\frac1q-\frac2p}$.
Since $\|f_{\lambda}\|_{L^p}\leq C\lambda$,
 this yields the restriction $q\geq p/2$.

\section{The $L^4_x(L^2_t)$-estimate associated to Fourier integral operators not satisfying the cinematic curvature condition uniformly}

In this section, we will prove   Theorem \ref{estimate-important--2}.
The approach presented here is based on the proof of Theorem 6.2 in \cite{1993-MSS-loacal-smoothing}, but it requires several modifications. To establish the $L^4$ estimate, we can also assume that the support of $a(\xi,t)$ lies within a cone with an angle smaller than $\lambda^{-\epsilon}$. This assumption only results in an additional increase in the constants by a factor of $\lambda^{\epsilon}$.

Let $\phi(x,t,\xi,\delta)=x\cdot\xi+t\delta q(\xi)+\delta^{2}R(\xi,t,\delta)$  and $a_{\lambda}(\xi,t)=a(\xi,t)
\beta(\lambda^{-1}|\delta_t\xi|)$.
Now, as in \cite{1993-MSS-loacal-smoothing},  we introduce an angle decomposition $\mathcal{Q}_{\lambda,\delta}=\sum_{v}\mathcal{Q}_{\lambda,\delta}^{\nu}$ with
$$
\mathcal{Q}_{\lambda,\delta}^{\nu}f(x,t)=\int_{\mathbb{R}^2}e^{i\phi(x,t,\xi,\delta)}a_{\lambda}(\xi,t)
\chi_{\nu}(\xi)
\widehat{f}(\xi)d\xi.
$$
Here, the function $\chi_{v}$ has support in a sector with an approximate angle of $\thickapprox \lambda^{-1/2}$ that includes the unit vector $\xi_v$.
It is assumed that the indices $\nu$ are chosen such that
$\arg \xi_{\nu}<\arg{\xi_{\nu+1}}$, implying that $|\xi_{\nu}-\xi_{\nu+1}|\thickapprox\lambda^{-\frac12}$.
 Furthermore, for any two indices $\nu$, $\mu$ in this sum, it holds true that $|\nu-\mu|\leq \lambda^{1/2-\epsilon}$.

 Using the Cauchy-Schwarz inequality, we get immediately
\begin{eqnarray}\label{equ-0920-001}
\|\mathcal{Q}_{\lambda,\delta}f\|_{L^4(L^2)}&\leq& \sum_{\lambda^{\epsilon}\leq2^r\leq \lambda^{1/2-\epsilon}}\left(\int\left|\int\sum_{|\nu-\nu'|\approx 2^r
\atop \nu'\leq \nu}\mathcal{Q}_{\lambda,\delta}^{\nu}f(x,t)
\overline{\mathcal{Q}_{\lambda,\delta}^{\nu'}f(x,t)}dt\right|^2dx\right)^{\frac14}\nonumber\\
&&\,\,\,\,\,\,\,\,\,\,\,\,+C_{\epsilon}\lambda^{\epsilon}\left\|\left(\sum_{\nu}|\mathcal{Q}_{\lambda,\delta}^{\nu}f|^2\right)^{\frac12}\right\|_{L^4(L^2)}.
\end{eqnarray}

We will need  further decompose based on $r$. For this purpose, we select $\rho\in C_0^{\infty}\left((-1,1)\right)$  satisfying the condition $\sum_{j\in\mathbb{Z}}\left(\rho(u-j)\right)^2\equiv 1$. Subsequently, we define, for $0<\epsilon_0 \ll \epsilon$,
\begin{equation}\label{general-gn2}
a_{\lambda,r}^{\nu,j}(\xi,t)=\left\{\begin{array}{ll}
\chi_{\nu}(\xi)a_{\lambda}(\xi,t)\rho\left(2^{-r}\lambda^{-1/2}q(\xi)-j\right),& \mbox{if $1\leq 2^r\leq \lambda^{\epsilon_0}$},\\
\chi_{\nu}(\xi)a_{\lambda}(\xi,t)\rho\left(2^{-r}\lambda^{-1/2+\epsilon_0}q(\xi)-j\right),& \mbox{if $\lambda^{\epsilon_0}< 2^r\leq \lambda^{1/4}$},\\
\chi_{\nu}(\xi)a_{\lambda}(\xi,t)\rho\left(2^{r}\lambda^{-1}q(\xi)-j\right),& \mbox{if $\lambda^{1/4}< 2^r\leq \lambda^{1/2-\epsilon}$},
\end{array}\right.
\end{equation}
and define the operators
$$
\mathcal{Q}_{\lambda,\delta,r}^{\nu,j}f(x,t)=\int_{\mathbb{R}^2}e^{i\phi(x,t,\xi,\delta)}a_{\lambda,r}^{\nu,j}(\xi,t)
\widehat{f}(\xi)d\xi.
$$

\begin{lemma}\label{lemma-081701}
There is an constant $C_1$, independent of $\lambda$ and $\delta$ such that $|j-j'|\geq C_1$,
\begin{eqnarray}\label{general-gn2}
&&\left|\int
\mathcal{Q}_{\lambda,\delta,r}^{\nu,j}f(x,t)\overline{\mathcal{Q}_{\lambda,\delta,r}^{\nu',j'}f(x,t)}dxdt\right|\nonumber\\
&&\,\,\,\,\,\,\,\,\,\,\,\leq
\left\{\begin{array}{ll}
C_N\lambda^{2}2^{2r}[2^{r}|j-j'|\delta\lambda^{1/2}]^{-N}\|f\|_{L^1}^2,& \mbox{if $1\leq 2^r\leq \lambda^{\epsilon_0}$},\\
C_N\lambda^{2-2\epsilon_0}2^{2r}[2^{r}|j-j'|\delta\lambda^{1/2-\epsilon_0}]^{-N}\|f\|_{L^1}^2,& \mbox{if $\lambda^{\epsilon_0}< 2^r\leq \lambda^{1/4}$},\\
C_N\lambda^32^{-2r}[(\lambda\delta)2^{-r}|j-j'|]^{-N}\|f\|_{L^1}^2,& \mbox{if $\lambda^{1/4}< 2^r\leq \lambda^{1/2-\epsilon}$}.
\end{array}\right.
\end{eqnarray}
\end{lemma}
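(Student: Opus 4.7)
The pairing is cleanest to analyze via Plancherel in $x$. For each fixed $t$, the $x$-Fourier transform of $\mathcal{Q}^{\nu,j}_{\lambda,\delta,r}f(\cdot,t)$ equals $e^{it\delta q(\xi)+i\delta^{2}R(\xi,t,\delta)}a^{\nu,j}_{\lambda,r}(\xi,t)\hat f(\xi)$, and the common oscillatory phase cancels in the pairing, giving
\begin{equation*}
\int\mathcal{Q}^{\nu,j}_{\lambda,\delta,r}f(x,t)\overline{\mathcal{Q}^{\nu',j'}_{\lambda,\delta,r}f(x,t)}\,dx=(2\pi)^{2}\int a^{\nu,j}_{\lambda,r}(\xi,t)\overline{a^{\nu',j'}_{\lambda,r}(\xi,t)}|\hat f(\xi)|^{2}\,d\xi.
\end{equation*}
Since $\supp\rho\subset(-1,1)$, the factor $\rho(\cdot-j)\rho(\cdot-j')$ inside $a^{\nu,j}\overline{a^{\nu',j'}}$ vanishes identically when $|j-j'|\geq 2$, in all three cases (with their respective $\rho$-scales). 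Taking $C_{1}=2$, the bilinear pairing is identically zero and the stated inequality holds trivially.

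The lemma's quantitative form is preferable for the later summation in $r,\nu,\nu',j,j'$, and can be derived independently via integration by parts in $t$. Expand $\mathcal{I}=\iint dx\,dt\iint e^{i\Delta\phi}a^{\nu,j}_{\lambda,r}(\xi,t)\overline{a^{\nu',j'}_{\lambda,r}(\eta,t)}\hat f(\xi)\overline{\hat f(\eta)}\,d\xi\,d\eta$ with
$$\Delta\phi=(x,\xi-\eta)+t\delta(q(\xi)-q(\eta))+\delta^{2}\bigl(R(\xi,t,\delta)-R(\eta,t,\delta)\bigr),$$
so that $\partial_{t}\Delta\phi=\delta(q(\xi)-q(\eta))+\delta^{2}\partial_{t}(R(\xi,t,\delta)-R(\eta,t,\delta))$. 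The $\rho$-cutoffs enforce $|q(\xi)-q(\eta)|\gtrsim 2^{r}\lambda^{1/2}|j-j'|$ in Case~(i) and the analogous separations $2^{r}\lambda^{1/2-\epsilon_{0}}|j-j'|$ and $2^{-r}\lambda|j-j'|$ in Cases~(ii) and~(iii). Applying $L=(i\partial_{t}\Delta\phi)^{-1}\partial_{t}$ to $e^{i\Delta\phi}$ and iterating $N$ times produces the case-dependent $[\,\cdot\,]^{-N}$ factor. A volume count on the $\xi$-support of $a^{\nu,j}_{\lambda,r}(\cdot,t)$---an angular sector of opening $\sim\lambda^{-1/2}$ at radius $\sim\lambda$ intersected with the $q$-slab of thickness $2^{r}\lambda^{1/2}$, $2^{r}\lambda^{1/2-\epsilon_{0}}$, or $2^{-r}\lambda$---yields area $\sim 2^{r}\lambda$, $2^{r}\lambda^{1-\epsilon_{0}}$, or $2^{-r}\lambda^{3/2}$ respectively; squaring for the $(\xi,\eta)$-pair matches the three claimed prefactors $\lambda^{2}2^{2r}$, $\lambda^{2-2\epsilon_{0}}2^{2r}$, and $\lambda^{3}2^{-2r}$, while $|\hat f|\leq\|f\|_{L^{1}}$ supplies the $\|f\|_{L^{1}}^{2}$ weight, and the $t$-integral over $[1,2]$ contributes $O(1)$.

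\textbf{Main obstacle.} The only nontrivial step in the IBP route is the uniform lower bound on $|\partial_{t}\Delta\phi|$ in each regime. The $\delta^{2}\partial_{t}R$ perturbation must be strictly subordinate to the principal $\delta(q(\xi)-q(\eta))$ term; using the degree-one homogeneity of $R$ and the mean-value theorem one has $|\partial_{t}R(\xi,t,\delta)-\partial_{t}R(\eta,t,\delta)|\lesssim|\xi-\eta|$, and since $|\xi-\eta|$ and $|q(\xi)-q(\eta)|$ are of the same order on the supports, the ratio of perturbation to principal is $O(\delta)$---uniformly controlled by taking $C_{1}$ large, in all three cases. With that lower bound in hand, the remaining amplitude-derivative bounds required by iterated IBP, together with the boundary handling at $t=1,2$ (absorbed by a smooth thickening of the $\beta(\lambda^{-1}|\delta_{t}\xi|)$-cutoff), are routine bookkeeping.
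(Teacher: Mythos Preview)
Your Plancherel observation is correct and is genuinely simpler than what the paper does. Because the phase here has the special form $\phi(x,t,\xi,\delta)=x\cdot\xi+t\delta q(\xi)+\delta^{2}R(\xi,t,\delta)$, the $x$-integration is exactly Plancherel, the unimodular factor $e^{it\delta q(\xi)+i\delta^{2}R}$ cancels against its conjugate, and the product $a_{\lambda,r}^{\nu,j}(\xi,t)\overline{a_{\lambda,r}^{\nu',j'}(\xi,t)}$ vanishes identically once $|j-j'|\geq 2$ since both amplitudes carry the \emph{same} $\rho$-cutoff in $q(\xi)$ at the same scale (the scale depends only on $r$, not on $\nu$ or $j$). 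So the lemma is in fact trivial for this particular phase.

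The paper instead follows the template of Mockenhaupt--Seeger--Sogge, writing the pairing as a double $\xi,\xi'$ integral and integrating by parts in $t$ against the lower bound $|\partial_t(\phi(\cdot,\xi)-\phi(\cdot,\xi'))|\gtrsim \delta\,|j-j'|\cdot(\text{scale})$. That route is designed for general FIO phases where the $x$-dependence is not linear in $\xi$ and Plancherel is unavailable; here it is overkill but yields the explicit quantitative decay displayed in the lemma. Your second paragraph reproduces exactly this argument.

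One small correction to your IBP sketch: the assertion that ``$|\xi-\eta|$ and $|q(\xi)-q(\eta)|$ are of the same order on the supports'' is not generally true (the sectors $\nu,\nu'$ may differ, so $\xi-\eta$ can have a large component tangent to the level sets of $q$). The cleaner way to absorb the $\delta^{2}\partial_tR$ term --- and what the paper implicitly uses --- is the crude bound $|\partial_tR(\xi,t,\delta)-\partial_tR(\eta,t,\delta)|\lesssim\lambda$, which gives perturbation $\lesssim\delta^{2}\lambda$; since the principal part is $\gtrsim \delta|j-j'|\cdot(\text{scale})$ with $\text{scale}\geq\lambda^{1/2-\epsilon_0}$ in every regime and $\delta\lambda^{1/2}\leq1$ from the hypothesis $\lambda>\delta^{-2}$, the principal term dominates once $C_1$ is large. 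This is a harmless fix and does not affect your conclusion.
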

\begin{proof}
To simplify the proof, we will only consider the case where $\lambda^{1/4}< 2^r\leq \lambda^{1/2-\epsilon}$.
It is worth noting that if $|j-j'|>4$, then the inequality $$|\phi_t'(x,t,\xi,\delta)-\phi_t'(x,t,\xi',\delta)|\geq \delta|q(\xi)-q(\xi')|-\delta^2|R_t'(\xi)-R_t'(\xi')|\geq c_0|j-j'|(\lambda\delta)2^{-r}$$
holds without any restrictions on $\nu$ and $\nu'$.

We observe that the absolute value of (\ref{general-gn2}) on the left-hand side is equivalent
 to
\begin{equation}
\int\mathcal{H}_{\nu,\nu'}^{j,j'}(x,\xi,\xi')\widehat{f}(\xi)\overline{\widehat{f}(\xi')}d\xi d\xi'dx
\end{equation}
where
\begin{equation}\label{0817000002}
\mathcal{H}_{\nu,\nu'}^{j,j'}(x,\xi,\xi')=\int e^{iH(x,t,\xi,\xi',\delta)}b_{j,j'}^{\nu,\nu'}(\xi,\xi',t)dt
\end{equation}
with
\begin{equation}
H(x,t,\xi,\xi',\delta)=\phi(x,t,\xi,\delta)-\phi(x,t,\xi',\delta)
\end{equation}
and
\begin{equation}
b_{j,j'}^{\nu,\nu'}(\xi,\xi',t)=a_{\lambda,r}^{\nu,j}(\xi,t)\overline{a_{\lambda,r}^{\nu',j'}(\xi',t)}.
\end{equation}

We evaluate (\ref{0817000002}) by integration by part argument.
If $\mathcal{D}$ denote the adjoint of operator $\frac{\partial_t}{iH_{t}'}$,
then we have
\begin{equation}
\mathcal{H}_{\nu,\nu'}^{j,j'}(x,\xi,\xi')=\int e^{iH(x,t,\xi,\xi',\delta)}\mathcal{D}^Nb_{j,j'}^{\nu,\nu'}(\xi,\xi',t)dt.
\end{equation}
It is easy to check that
\begin{equation}
|\mathcal{D}^Nb_{j,j'}^{\nu,\nu'}(\xi,\xi',t)|\leq C_{N} (|j-j'|(\lambda\delta)2^{-r})^{-N}.
\end{equation}
Thus we have
\begin{equation}
|\mathcal{H}_{\nu,\nu'}^{j,j'}(x,\xi,\xi')|\leq C_N  (|j-j'|(\lambda\delta)2^{-r})^{-N}.
\end{equation}

Therefore, the integral of (\ref{general-gn2}) is $\leq C_{N} \lambda^32^{-2r}(|j-j'|\delta\lambda2^{-r})^{-N}\|f\|_{L^1}^2$.
\end{proof}
By the decompositions (\ref{general-gn2}),
we have
\begin{eqnarray}\label{equ-0920-002}
\|\mathcal{Q}_{\lambda,\delta}f\|_{L^4(L^2)}&\leq&
\sum_{\lambda^{\epsilon}\leq2^r\leq \lambda^{1/2-\epsilon}}\left(\int\left|\int\sum_{|j-j'|\leq C_1}\sum_{|\nu-\nu'|\approx 2^r
\atop \nu'\leq \nu}\mathcal{Q}_{\lambda,\delta}^{\nu}f(x,t)
\overline{\mathcal{Q}_{\lambda,\delta}^{\nu'}f(x,t)}dt\right|^2dx\right)^{\frac14}\nonumber\\
&&\leq \sum_{\lambda^{\epsilon}\leq2^r\leq \lambda^{1/2-\epsilon}}\left(\int\left|\int\sum_{|j-j'|> C_1}\sum_{|\nu-\nu'|\approx 2^r
\atop \nu'\leq \nu}\mathcal{Q}_{\lambda,\delta}^{\nu}f(x,t)
\overline{\mathcal{Q}_{\lambda,\delta}^{\nu'}f(x,t)}dt\right|^2dx\right)^{\frac14}\nonumber\\
&&+C_{\epsilon}\lambda^{\epsilon}\left\|\left(\sum_{\nu}|\mathcal{Q}_{\lambda,\delta}^{\nu}f|^2\right)^{\frac12}\right\|_{L^4(L^2)}.
\end{eqnarray}

Using Lemma \ref{lemma-081701}, we see that the second term is bounded by $C_N \lambda^{-N}\|f\|_{L^1}^2$  for $|j-j'|\geq C_1$ and $\lambda\geq \delta^{-2}$.
If now we apply the Schwarz inequality with respect to the $t$ variable, we obtain
\begin{eqnarray}\label{2024082302}
\|\mathcal{Q}_{\lambda,\delta}f\|_{L^4{(L^2)}}\leq C_{\epsilon}\lambda^{\epsilon}\left\|\left(\sum_{\nu}|\mathcal{Q}_{\lambda,\delta}^{\nu}f|^2\right)^{1/2}\right\|_{L^4}
+C_{\epsilon,N}\lambda^{-N}\|f\|_{L^4}+C_{\epsilon}(I_r)^{1/4}
\end{eqnarray}
where
\begin{eqnarray}
I_r&=&\int\int\left|\sum_{|j-j'|\leq C_1}\sum_{|\nu-\nu'|\approx 2^r\atop \nu'\leq \nu}
\mathcal{Q}_{\lambda,\delta,r}^{\nu,j}f(x,t)\overline{\mathcal{Q}_{\lambda,\delta,r}^{\nu',j'}f(x,t)}\right|^2dtdx\nonumber\\
&=&\int\int\sum_{|j-j'|\leq C_1}\sum_{|k-k'|\leq C_1}
\sum_{|\nu-\nu'|\approx 2^r\atop \nu'\leq \nu}\sum_{|\mu-\mu'|\approx 2^r\atop \mu\leq \mu'}
\mathcal{Q}_{\lambda,\delta,r}^{\nu,j}f(x,t)\mathcal{Q}_{\lambda,\delta,r}^{\mu,k}f(x,t)\nonumber\\
&&\times
\overline{\mathcal{Q}_{\lambda,\delta,r}^{\nu',j'}f(x,t)}\overline{\mathcal{Q}_{\lambda,\delta,r}^{\mu',k'}f(x,t)}dtdx.
\end{eqnarray}

In order to estimate the term $I_r$ we have to use another orthogonality estimate.

\begin{lemma}\label{lemma-0817}(see Lemma 6.8 in \cite{1993-MSS-loacal-smoothing})
Let $\Psi\in C^4(\mathbb{R}^2)$ be homogenous of degree one.
Let $\gamma<\frac{\pi}{4}$, $a_0\leq 1/4$, $A_0\geq1$.
Let $\mathcal{S}_{\lambda}$ be the intersection of a sector which  subtends an angle of size $\gamma$ with the
annulus $\{\xi:(1-a_0)\lambda \leq |\xi|\leq (1+a_0)\lambda\}$.
Let $h\in C^1(\mathbb{R}^2\backslash 0)$ be homogenous of degree one such that
$b_0|\xi|\leq h(\xi)\leq b_1|\xi|$, $|\nabla_{\xi} h(\xi)|\leq b_2$ for some positive constants $b_0$, $b_1$, $b_2$.

Suppose that $a_0^{-1}\leq 2^{n}$, $n\leq l$, $2^l\leq \gamma \lambda^{1/2}$
and $\xi$, $\eta$, $\xi'$, $\eta'\in\mathcal{S}_{\lambda}$ are chosen such that for given integers $\nu$, $\mu$, $\nu'$, $\mu'$

(1) $|\arg(\xi)-\nu\lambda^{-1/2}|\leq \lambda^{-1/2}$; $|\arg(\xi')-\nu'\lambda^{-1/2}|\leq \lambda^{-1/2}$;

(2) $|\arg(\eta)-\mu\lambda^{-1/2}|\leq \lambda^{-1/2}$; $|\arg(\eta')-\mu'\lambda^{-1/2}|\leq \lambda^{-1/2}$;

(3) $2^{l-1}\lambda^{-1/2}\leq \max\{|\arg(\xi)-\arg(\eta)|,|\arg(\xi')-\arg(\eta')|\}\leq 2^{l+1}\lambda^{-1/2}$;

(4) $|h(\xi)-h(\xi'))|\leq 2^{-n}\lambda$;

(5) $|h(\eta)-h(\eta'))|\leq 2^{-n}\lambda$.\\

Then one can choose $\gamma$, $a_0$ sufficiently small, and $A_0$ sufficiently large
(only depending on $\Psi$, $b_0$, $b_1$, $b_2$)
such that for all $\nu$, $\mu$, $\nu'$, $\mu'$
with $A_02^{l-n}\leq |\nu-\nu'|+|\mu-\mu'|\leq \gamma \lambda^{1/2}$,
\begin{eqnarray}\label{ineq-000001}
&&|\Psi(\xi)+\Psi(\eta)-\Psi(\xi')-\Psi(\eta')|\nonumber\\
&&\,\,\,\,\,\,\,\,\leq C[(2^l|\mu-\mu'|+|\mu-\mu'|^2)+(2^l|\nu-\nu'|+|\nu-\nu'|^2)+|\xi+\eta-\xi'-\eta'|].
\end{eqnarray}
Suppose now that $\Psi$ satisfies the additional assumption rank $\Psi_{\xi\xi}''=1$.
Then if either $\mu\leq\nu$ and  $\mu'\leq\nu'$ or $\nu\leq\mu$ and  $\nu'\leq\mu'$
and if $A_02^{l-n}\leq |\nu-\nu'|+|\mu-\mu'|\leq \gamma \lambda^{1/2} $,
we have also with suitable positive constants $c_0$, $C_0$
\begin{eqnarray}\label{ineq-000002}
&&|\Psi(\xi)+\Psi(\eta)-\Psi(\xi')-\Psi(\eta')|\nonumber\\
&&\,\,\,\,\,\,\,\,\geq c_0[(2^l|\nu-\nu'|+|\nu-\nu'|^2)+(2^l|\mu-\mu'|+|\mu-\mu'|^2)]-C_0|\xi+\eta-\xi'-\eta'|.
\end{eqnarray}

We also get for all $\nu$, $\mu$, $\nu'$, $\mu'$
with $A_02^{l-n}\leq |\nu-\mu'|+|\mu-\nu'|\leq \gamma \lambda^{1/2}$,
\begin{eqnarray}\label{ineq-000001-2}
&&|\Psi(\xi)+\Psi(\eta)-\Psi(\xi')-\Psi(\eta')|\nonumber\\
&&\,\,\,\,\,\,\,\,\leq C[(2^l|\nu-\mu'|+|\mu-\nu'|^2)+(2^l|\nu-\mu'|+|\mu-\nu'|^2)+|\xi+\eta-\xi'-\eta'|].
\end{eqnarray}
Suppose now that $\Psi$ satisfies the additional assumption rank $\Psi_{\xi\xi}''=1$.
Then if either $\mu\leq\nu$ and  $\nu'\leq\mu'$ or $\nu\leq\mu$ and  $\mu'\leq\nu'$
and if $A_02^{l-n}\leq |\nu-\mu'|+|\mu-\nu'|\leq \gamma \lambda^{1/2} $,
we have also with suitable positive constants $c_0$, $C_0$
\begin{eqnarray}\label{ineq-000002-2}
&&|\Psi(\xi)+\Psi(\eta)-\Psi(\xi')-\Psi(\eta')|\nonumber\\
&&\,\,\,\,\,\,\,\,\geq c_0[(2^l|\nu-\mu'|+|\mu-\nu'|^2)+(2^l|\nu-\mu'|+|\mu-\nu'|^2)]-C_0|\xi+\eta-\xi'-\eta'|.
\end{eqnarray}
\end{lemma}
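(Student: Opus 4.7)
The plan is to reduce the four‐point expression $\Psi(\xi)+\Psi(\eta)-\Psi(\xi')-\Psi(\eta')$ to a computation on the unit circle, using the homogeneity of degree one, and then to read off both the upper and lower bounds via Taylor expansion in the angular variable. First I would write each covector in polar form $\xi=r_\xi\omega_\xi$, so that $\Psi(\xi)=r_\xi\tilde\Psi(\omega_\xi)$ with $\tilde\Psi\in C^4(S^1)$. The rank-one assumption on $\Psi_{\xi\xi}''$ is equivalent, after modding out the radial direction that homogeneity kills automatically, to $\tilde\Psi''\ne 0$ on the relevant arc of $S^1$. The hypotheses on $h$ are there to translate the radial constraints (4) and (5) into effective bounds; since $h$ is homogeneous of degree one with $b_0|\xi|\le h(\xi)\le b_1|\xi|$ and $|\nabla h|\le b_2$, the inequality $|h(\xi)-h(\xi')|\le 2^{-n}\lambda$ combined with the angular assumptions (1)–(2) and the narrowness of the sector $\mathcal{S}_\lambda$ yields $|r_\xi-r_{\xi'}|\le C2^{-n}\lambda$, and analogously for the $\eta$-pair.

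Next I would organise the sum as
\[
\Psi(\xi)+\Psi(\eta)-\Psi(\xi')-\Psi(\eta')=\bigl[\Psi(\xi)-\Psi(\xi')\bigr]+\bigl[\Psi(\eta)-\Psi(\eta')\bigr],
\]
and also as $\bigl[\Psi(\xi)-\Psi(\eta')\bigr]+\bigl[\Psi(\eta)-\Psi(\xi')\bigr]$ depending on which pairing of angular differences is active. Using Taylor's formula with integral remainder, each difference $\Psi(\zeta)-\Psi(\zeta')$ is $\nabla\Psi(\zeta_0)\cdot(\zeta-\zeta')+\tfrac12(\zeta-\zeta')^T\Psi''(\zeta_0)(\zeta-\zeta')+O(|\zeta-\zeta'|^3)$, where $\zeta_0$ is a midpoint. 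The linear terms collapse into $\nabla\Psi(\zeta_0)\cdot(\xi+\eta-\xi'-\eta')$ once a common base point is chosen, producing the term $|\xi+\eta-\xi'-\eta'|$ on the right-hand side of \eqref{ineq-000001} (and as the subtracted term in \eqref{ineq-000002}). The quadratic remainder, evaluated through the parametrization by $\theta=\nu\lambda^{-1/2}$ and the magnitude $2^l\lambda^{-1/2}$ of the angular separation, delivers the expressions $2^l|\nu-\nu'|+|\nu-\nu'|^2$ and $2^l|\mu-\mu'|+|\mu-\mu'|^2$; this gives the upper bound \eqref{ineq-000001}, and by symmetry \eqref{ineq-000001-2}.

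For the lower bounds \eqref{ineq-000002} and \eqref{ineq-000002-2}, the point is that the leading angular second derivative $\tilde\Psi''(\theta_0)$ does not vanish, so the Hessian contribution is signed. Here I would exploit the ordering hypotheses ($\mu\le\nu$ with $\mu'\le\nu'$, etc.): they ensure that the two quadratic blocks contributed by the $\xi\xi'$ and $\eta\eta'$ pairs carry the same sign and therefore cannot cancel. Choosing $A_0$ large enough (depending only on $\tilde\Psi''$, $b_0$, $b_1$, $b_2$) so that $A_02^{l-n}\le|\nu-\nu'|+|\mu-\mu'|$ guarantees that the quadratic terms dominate both the radial remainder controlled by $2^{-n}\lambda$ and the cubic error, which is why the hypothesis $a_0^{-1}\le 2^n$ is crucial.

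The main obstacle is precisely the lower bound: one must rule out cancellation between the $\xi\xi'$ and $\eta\eta'$ quadratic blocks, and simultaneously absorb the radial error coming from $|r_\xi-r_{\xi'}|$ and the cubic Taylor remainder into the dominant term $c_0(2^l|\nu-\nu'|+|\nu-\nu'|^2)+c_0(2^l|\mu-\mu'|+|\mu-\mu'|^2)$. The threshold condition $A_02^{l-n}\le |\nu-\nu'|+|\mu-\mu'|$, together with the ordering hypothesis and the rank-one assumption, is exactly what makes this absorption possible, so one has to track constants carefully rather than do anything conceptually new beyond the Taylor/homogeneity scheme. Since the statement is quoted verbatim from \cite{1993-MSS-loacal-smoothing}, I would in practice simply cite it rather than redo the bookkeeping.
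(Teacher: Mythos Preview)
The paper does not prove this lemma at all: it is stated with the parenthetical ``see Lemma 6.8 in \cite{1993-MSS-loacal-smoothing}'' and then simply used. Your final remark that you would ``in practice simply cite it'' is therefore exactly what the paper does, and your sketch of the polar--coordinate/Taylor--expansion argument is a faithful outline of the original Mockenhaupt--Seeger--Sogge proof, so nothing further is needed.
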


\begin{lemma}\label{2014082301}
Suppose that $|\nu-\nu'|\approx 2^r$, $|\mu-\mu'|\approx 2^r$, $\nu'\leq\nu$,
$\mu\leq\mu'$, $|j-j'|\leq C_1$, $|k-k'|\leq C_1$,
or $\lambda>A_1$ for sufficient large $A_1$.
Furthermore assume that $|\nu-\mu|+|\nu'-\mu'|\geq A_2$
and $|\mu-\nu'|+|\nu-\mu'|\geq A_2$ for a sufficiently large  constant $A_2$,
independent of $\lambda$.

Suppose  $\lambda^{\epsilon}\leq 2^r\leq \lambda^{1/4}$.
If $\mu<\nu$ and $\nu'<\mu'$ then
\begin{eqnarray}\label{ineq-2fenjie-03-01}
&&\left|\int
\mathcal{Q}_{\lambda,\delta,r}^{\nu,j}f(x,t)\mathcal{Q}_{\lambda,\delta,r}^{\mu,k}f(x,t)
\overline{\mathcal{Q}_{\lambda,\delta,r}^{\nu',j'}f(x,t)}\overline{\mathcal{Q}_{\lambda,\delta,r}^{\mu',k'}f(x,t)}dtdx\right|\nonumber\\
&&\,\,\,\,\,\,\leq C_N\lambda^4 2^{4r}(2^r\delta|\nu-\mu'|+2^r\delta|\mu-\nu'|)^{-N}\|f\|_{L^1}^4;
\end{eqnarray}
if $\nu<\mu$ or $\mu'<\nu'$ then
\begin{eqnarray}\label{ineq-2fenjie-03-02}
&&\left|\int
\mathcal{Q}_{\lambda,\delta,r}^{\nu,j}f(x,t)\mathcal{Q}_{\lambda,\delta,r}^{\mu,k}f(x,t)
\overline{\mathcal{Q}_{\lambda,\delta,r}^{\nu',j'}f(x,t)}\overline{\mathcal{Q}_{\lambda,\delta,r}^{\mu',k'}f(x,t)}dtdx\right|\nonumber\\
&&\,\,\,\,\,\,\leq C_N\lambda^4 2^{4r}(2^r\delta|\nu-\mu|+2^r\delta|\mu'-\nu'|)^{-N}\|f\|_{L^1}^4.
\end{eqnarray}

Suppose $\lambda^{1/4}\leq 2^r\leq \lambda^{\frac12-\epsilon}$.
If $\mu<\nu$ and $\nu'<\mu'$ then
\begin{eqnarray}\label{ineq-2fenjie-01-03}
&&\left|\int
\mathcal{Q}_{\lambda,\delta,r}^{\nu,j}f(x,t)\mathcal{Q}_{\lambda,\delta,r}^{\mu,k}f(x,t)
\overline{\mathcal{Q}_{\lambda,\delta,r}^{\nu',j'}f(x,t)}\overline{\mathcal{Q}_{\lambda,\delta,r}^{\mu',k'}f(x,t)}dtdx\right|\nonumber\\
&&\,\,\,\,\,\,\leq \lambda^6 2^{-4r}(2^r\delta|\nu-\mu'|+2^r\delta|\mu-\nu'|)^{-N}\|f\|_{L^1}^4;
\end{eqnarray}
if $\nu<\mu$ or $\mu'<\nu'$ then
\begin{eqnarray}\label{ineq-2fenjie-02-04}
&&\left|\int
\mathcal{Q}_{\lambda,\delta,r}^{\nu,j}f(x,t)\mathcal{Q}_{\lambda,\delta,r}^{\mu,k}f(x,t)
\overline{\mathcal{Q}_{\lambda,\delta,r}^{\nu',j'}f(x,t)}\overline{\mathcal{Q}_{\lambda,\delta,r}^{\mu',k'}f(x,t)}dtdx\right|\nonumber\\
&&\,\,\,\,\,\,\leq \lambda^6 2^{-4r}(2^r\delta|\nu-\mu|+2^r\delta|\mu'-\nu'|)^{-N}\|f\|_{L^1}^4.
\end{eqnarray}

\end{lemma}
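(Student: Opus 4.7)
The plan is to reduce the four-fold integral to an oscillatory integral in $(x,t,\xi,\eta,\xi',\eta')$, extract the constraint $\xi+\eta-\xi'-\eta'=0$ from the $x$-integration, and then gain polynomial decay by repeated integration by parts in $t$, using the lower bound on the $t$-phase derivative furnished by Lemma \ref{lemma-0817}.

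Concretely, writing out the product of the four operators and carrying out the $x$-integration produces a factor $(2\pi)^{2}\delta(\xi+\eta-\xi'-\eta')$. Using this delta to eliminate $\eta'=\xi+\eta-\xi'$, the surviving $t$-phase is
$$H(t,\xi,\eta,\xi',\delta)=t\delta\bigl[q(\xi)+q(\eta)-q(\xi')-q(\xi+\eta-\xi')\bigr]+\delta^{2}\bigl[R(\xi,t,\delta)+R(\eta,t,\delta)-R(\xi',t,\delta)-R(\xi+\eta-\xi',t,\delta)\bigr].$$
On the support of the symbols the perturbation $\delta^{2}\partial_{t}R$ is subordinate to the principal term $\delta[q(\xi)+q(\eta)-q(\xi')-q(\eta')]$, because $R$ is homogeneous of degree one in $\xi$, $\delta\ll 1$, and $\lambda\ge\delta^{-2}$ forces the bands where $q$ is localized to dominate.

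Now I apply Lemma \ref{lemma-0817} with $\Psi=q$ (which has $\mathrm{rank}\,q''_{\xi\xi}=1$) together with the hypotheses $|\nu-\mu|+|\nu'-\mu'|\ge A_{2}$ and $|\nu-\mu'|+|\mu-\nu'|\ge A_{2}$ and the radial-band localization dictated by $\rho$ in (\ref{general-gn2}) (width $\approx 2^{r}\lambda^{1/2}$ in $q$ when $2^{r}\le\lambda^{1/4}$, width $\approx \lambda 2^{-r}$ when $2^{r}>\lambda^{1/4}$). In the subcase $\mu<\nu$, $\nu'<\mu'$, estimate (\ref{ineq-000002-2}) yields
$$|\partial_{t}H|\gtrsim \delta\,2^{r}\bigl(|\nu-\mu'|+|\mu-\nu'|\bigr),$$
while in the subcase $\nu<\mu$ or $\mu'<\nu'$, estimate (\ref{ineq-000002}) yields the analogue with $(|\nu-\mu|,|\mu'-\nu'|)$; the residual $|\xi+\eta-\xi'-\eta'|$ appearing in those inequalities is killed by the delta constraint. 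Integrating by parts $N$ times in $t$ with $\mathcal{D}=\partial_{t}/(i\partial_{t}H)$ then produces the decay factor $(\delta 2^{r}(|\nu-\mu'|+|\mu-\nu'|))^{-N}$ or its companion; the compact $t$-support of $a_{\lambda}$ kills boundary terms, and $t$-derivatives landing on the symbol are harmless since they cost at most $O(1)$ on its support.

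Finally I collect the remaining momentum integrals. Bounding each $\hat f$ by $\|f\|_{L^{1}}$, the joint $(\xi,\eta,\xi')$ support (after the delta has eliminated $\eta'$) is confined, per momentum, to a sector of angle $\lambda^{-1/2}$ by $\chi_{\nu}$ and to a $q$-band of the width recorded above by $\rho$; computing these volumes in the two regimes yields the prefactors $\lambda^{4}2^{4r}$ when $\lambda^{\epsilon}\le 2^{r}\le\lambda^{1/4}$ and $\lambda^{6}2^{-4r}$ when $\lambda^{1/4}< 2^{r}\le\lambda^{1/2-\epsilon}$, as claimed. The step I expect to be the main obstacle is precisely the control of the $\delta^{2}R$ perturbation—this is the point at which the Fourier integral operator fails the uniform cinematic curvature condition, so one must verify that Lemma \ref{lemma-0817}'s lower bound survives after absorption of $\delta^{2}\partial_{t}R$ uniformly across all regimes $\lambda^{\epsilon}\le 2^{r}\le\lambda^{1/2-\epsilon}$; the hypothesis $\lambda\ge\delta^{-2}$ is designed precisely for this purpose, and the verification amounts to careful bookkeeping of the band widths against $\delta^{2}\|\partial_{t}R\|_{\infty}$.
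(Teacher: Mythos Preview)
Your overall strategy—integrating out $x$ to obtain the constraint $\xi+\eta-\xi'-\eta'=0$, then gaining decay by repeated integration by parts in $t$ using the lower bound from Lemma~\ref{lemma-0817} with $\Psi=q$—is the same skeleton as the paper's proof, and the delta-constraint step is in fact a clean way to kill the $|\xi+\eta-\xi'-\eta'|$ correction in (\ref{ineq-000002}), (\ref{ineq-000002-2}).

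However, there is a genuine gap in how you treat the $\delta^{2}R$ contribution. You write that ``$t$-derivatives landing on the symbol are harmless since they cost at most $O(1)$,'' but the adjoint operator $\mathcal{D}^{N}$ does not only hit the symbol: it also differentiates $1/\partial_{t}H$, producing factors of the form
\[
\frac{\partial_{t}^{n}H}{(\partial_{t}H)^{n+1}}\quad (n\ge 1),\qquad \partial_{t}^{n+1}H=\delta^{2}\bigl[\partial_{t}^{n+1}R(\xi,t,\delta)+\partial_{t}^{n+1}R(\eta,t,\delta)-\partial_{t}^{n+1}R(\xi',t,\delta)-\partial_{t}^{n+1}R(\eta',t,\delta)\bigr],
\]
since $q$ is $t$-independent. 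You propose to control these by ``bookkeeping of the band widths against $\delta^{2}\|\partial_{t}R\|_{\infty}$.'' That crude bound gives $\delta^{2}|\partial_{t}^{n}R\text{-difference}|\lesssim \delta^{2}\lambda$, and for this to be absorbed into $|\partial_{t}H|\gtrsim \delta\,2^{r}M$ one would need $\delta\lambda\lesssim 2^{r}M$. With $\lambda\ge\delta^{-2}$ this only says $\delta\lambda\le\lambda^{1/2}$, while $2^{r}M$ can be as small as $A_{2}\lambda^{\epsilon}$. So the crude supremum bound fails across most of the range.

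What the paper does instead is apply Lemma~\ref{lemma-0817} a \emph{second} time, now using the \emph{upper} bounds (\ref{ineq-000001}), (\ref{ineq-000001-2}) with $\Psi$ taken to be $\partial_{t}^{n}R(\cdot,t,\delta)$ (which is homogeneous of degree one in $\xi$ for each fixed $t$) and $h=q$. On the delta constraint this yields
\[
\delta^{2}\bigl|\partial_{t}^{n}R\text{-difference}\bigr|\;\le\;C\,\delta^{2}\bigl(2^{l}M+M^{2}\bigr)\;\le\;C\,\delta\cdot\delta\bigl(2^{l}M+M^{2}\bigr)\;\ll\;\delta\bigl(2^{l}M+M^{2}\bigr)\;\lesssim\;|\partial_{t}H|,
\]
so each such factor is harmlessly absorbed and every application of $\mathcal{D}$ genuinely gains the full factor $(\delta\,2^{r}M)^{-1}$. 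This double use of Lemma~\ref{lemma-0817}—lower bound for $q$, upper bound for $\partial_{t}^{n}R$, with the same band parameter $n$ dictated by the $\rho$-localization in $q$—is exactly the mechanism that compensates for the failure of uniform cinematic curvature, and it is the piece your proposal is missing.
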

\begin{proof}
One notice that the left side of (\ref{ineq-2fenjie-03-01}) is equivalent to the absolute value of
\begin{equation}
\int\mathcal{H}_{j,k,j',k'}^{\nu,\mu,\nu',\mu',\delta}(x,\xi,\eta,\xi',\eta')\widehat{f}(\xi)\widehat{f}(\eta)\overline{\widehat{f}(\xi')}
\overline{\widehat{f}(\eta')}
d\xi d\xi'd\eta d\eta' dx
\end{equation}
where
\begin{equation}
\mathcal{H}_{j,k,j',k'}^{\nu,\mu,\nu',\mu',\delta}(x,\xi,\eta,\xi',\eta')=
\int e^{i(\Phi(x,t,\xi,\eta,\xi',\eta',\delta))}b_{j,k,j',k'}^{\nu,\mu,\nu',\mu'}(t,\xi,\eta,\xi',\eta')dt
\end{equation}
with
$$
\Phi(x,t,\xi,\eta,\xi',\eta',\delta)=\phi(x,t,\xi,\delta)+\phi(x,t,\eta,\delta)-\phi(x,t,\xi',\delta)-\phi(x,t,\eta',\delta)
$$
and
$$
b_{j,k,j',k'}^{\nu,\mu,\nu',\mu'}(t,\xi,\eta,\xi',\eta')=a_{\lambda,r}^{\nu,j}(\xi,t)a_{\lambda,r}^{\mu,k}(\eta,t)
\overline{a_{\lambda,r}^{\nu',j'}(\xi',t)}\overline{a_{\lambda,r}^{\mu',k'}(\eta',t)}.
$$
We first observe that
\begin{equation}\label{equ-2024082301}
|\partial_t^{\alpha}b_{j,k,j',k'}^{\nu,\mu,\nu',\mu'}(t,\xi,\eta,\xi',\eta')|\leq C_{\alpha}.
\end{equation}

If $\mathcal{L}$ denote the adjoint of operator $\frac{\partial_t}{i\Phi_{t}'}$,
then we have
\begin{equation}\label{ineq-qingxing4-1}
\mathcal{H}_{j,k,j',k'}^{\nu,\mu,\nu',\mu',\delta}(x,\xi,\eta,\xi',\eta')=
\int e^{i(\Phi(x,t,\xi,\eta,\xi',\eta',\delta))}\mathcal{L}^Nb_{j,k,j',k'}^{\nu,\mu,\nu',\mu'}(t,\xi,\eta,\xi',\eta')dt.
\end{equation}
A straightforward computation shows that $\mathcal{L}^Nb_{j,k,j',k'}^{\nu,\mu,\nu',\mu'}(t,\xi,\eta,\xi',\eta')$ is the sum of the form
\begin{eqnarray}\label{ineq-chang-fenmu0000}
&&\nonumber\frac{\prod_{i=1}^{N_2}\left|\delta^2R_t^{(n_i)}(\xi,t,\delta)+\delta^2R_t^{(n_i)}(\eta,t,\delta)-\delta^2R_t^{(n_i)}(\xi',t,\delta)-\delta^2R_t^{(n_i)}(\eta',t,\delta)\right|}{
|\delta \tilde{q}_t'(\xi,t,\delta)+\delta \tilde{q}_t'(\eta,t,\delta)-\delta \tilde{q}_t'(\xi',t,\delta)-\delta \tilde{q}_t'(\eta',t,\delta)|^{N+N_1}}\\
&&\,\,\,\,\,\,\,\,\times |\partial_t^{\alpha}b_{j,k,j',k'}^{\nu,\mu,\nu',\mu'}(t,\xi,\eta,\xi',\eta')|,
\end{eqnarray}
where
$0\leq N_1\leq N$, $0\leq N_2\leq N_1$, $n_i\geq 2$ and $n_1+\cdots+n_{N_2}+\alpha=N+N_1$.

The worst case to bound the numerator of (\ref{ineq-chang-fenmu0000}) is that we take $N_1=N_2=0$.
In order to use Lemma \ref{lemma-0817} to estimate the numerator of (\ref{ineq-chang-fenmu0000}),
we will replace $\gamma$ by $\lambda^{-\epsilon}$ and $h(\xi)$ by $\partial_tR(\xi,t,\delta)$.
In order to estimate the denominator of (\ref{ineq-chang-fenmu0000}) from below,
we will replace $\gamma$ by $\lambda^{-\epsilon}$ and $h(\xi)$ by $q(\xi)$.

If $\nu<\mu$ or $\mu'<\nu'$, then $\nu'<\nu<\mu<\mu'$ or $\mu<\mu'<\nu'<\nu$ respectively.
If $\lambda^{\epsilon}<2^r\leq \lambda^{1/4}$ and $\xi\in \supp a_{\lambda,r}^{\nu,j}$, $\xi'\in \supp a_{\lambda,r}^{\nu',j'}$,
then
\begin{equation}
|h(\xi)-h(\xi')|\leq C 2^{r}\lambda^{1/2-\epsilon_0}.
\end{equation}
So if we let $2^{-n}\lambda$ equal the right-hand side,
we have $2^{l-n}\leq C 2^{r}2^l\lambda^{-1/2-\epsilon_0} \leq A_0^{-1} (|\nu-\nu'|+|\mu-\mu'|)$.
The same argument applies $|h(\eta)-h(\eta')|$.

If $\lambda^{1/4}<2^r\leq \lambda^{1/2-\epsilon}$ and $\xi\in \supp a_{\lambda,r}^{\nu,j}$, $\xi'\in \supp a_{\lambda,r}^{\nu',j'}$,
then
\begin{equation}
|h(\xi)-h(\xi')|\leq C 2^{-r}\lambda.
\end{equation}
So if we let $2^{-n}\lambda$ equal the right-hand side,
we have $2^{l-n}\leq C$.
The same argument applies $|h(\eta)-h(\eta')|$.
Thus, we  can use (\ref{ineq-000001-2}), (\ref{ineq-000002-2}) and (\ref{equ-2024082301}) for (\ref{ineq-chang-fenmu0000})
to get
\begin{equation*}
|\mathcal{L}^Nb_{j,k,j',k'}^{\nu,\mu,\nu',\mu'}(t,\xi,\eta,\xi',\eta')|\leq C
\frac{1}{\left(\delta|\nu-\nu'|(2^l+|\nu-\nu'|)+\delta|\mu-\mu'|(2^l+|\mu-\mu'|)\right)^{N}}.
\end{equation*}
Since $\mu<\nu'<\nu<\mu'$, $\mu<\nu'<\mu'<\nu$, $\nu'<\mu<\mu'<\nu$, or $\nu'<\mu<\nu<\mu'$ and
$|\nu-\mu|+|\nu'-\mu'|\approx 2^l$, we can obtain
\begin{equation*}
|\mathcal{L}^Nb_{j,k,j',k'}^{\nu,\mu,\nu',\mu'}(t,\xi,\eta,\xi',\eta')|\leq C
\frac{1}{\left(2^r\delta|\mu-\nu'|+2^r\delta|\mu'-\nu|\right)^N}.
\end{equation*}
This inequality implies (\ref{ineq-2fenjie-03-01}) and (\ref{ineq-2fenjie-01-03}).

Let us assume $|\nu-\mu|+|\nu'-\mu'|\approx 2^l$.
If $\mu<\nu$ and $\nu'<\mu'$, then
we have one of the following cases: $\mu<\nu'<\nu<\mu'$, $\mu<\nu'<\mu'<\nu$, $\nu'<\mu<\mu'<\nu$, or $\nu'<\mu<\nu<\mu'$.
In either one of the these cases, we notice that $2^l\approx 2^r$.

If $\lambda^{\epsilon}<2^r\leq \lambda^{1/4}$ and $\xi\in \supp a_{\lambda,r}^{\nu,j}$, $\xi'\in \supp a_{\lambda,r}^{\nu',j'}$,
then
\begin{equation}
|h(\xi)-h(\xi')|\leq C 2^{r}\lambda^{1/2-\epsilon_0}.
\end{equation}
So if we let $2^{-n}\lambda$ equal the right-hand side,
we have $2^{l-n}\leq C 2^{2r}\lambda^{-1/2-\epsilon_0} \leq C_0$.
The same argument applies $|h(\eta)-h(\eta')|$.

If $\lambda^{1/4}<2^r\leq \lambda^{1/2-\epsilon}$ and $\xi\in \supp a_{\lambda,r}^{\nu,j}$, $\xi'\in \supp a_{\lambda,r}^{\nu',j'}$,
then
\begin{equation}
|h(\xi)-h(\xi')|\leq C 2^{-r}\lambda.
\end{equation}
So if we let $2^{-n}\lambda$ equal the right-hand side,
we have $2^{l-n}\leq C$.
The same argument applies $|h(\eta)-h(\eta')|$.
Thus, we  can use (\ref{ineq-000001-2}), (\ref{ineq-000002-2}) and (\ref{equ-2024082301}) for (\ref{ineq-chang-fenmu0000})
to get
\begin{equation*}
|\mathcal{L}^Nb_{j,k,j',k'}^{\nu,\mu,\nu',\mu'}(t,\xi,\eta,\xi',\eta')|\leq C
\frac{1}{\left(\delta|\nu-\nu'|(2^l+|\nu-\nu'|)+\delta|\mu-\mu'|(2^l+|\mu-\mu'|)\right)^{N}}.
\end{equation*}
Since
$|\nu-\mu|+|\nu'-\mu'|\approx 2^l$, we can obtain
\begin{equation*}
|\mathcal{L}^Nb_{j,k,j',k'}^{\nu,\mu,\nu',\mu'}(t,\xi,\eta,\xi',\eta')|\leq C
\frac{1}{\left(2^r\delta|\mu-\nu|+2^r\delta|\mu'-\nu'|\right)^N}.
\end{equation*}
This inequality implies (\ref{ineq-2fenjie-03-02}) and (\ref{ineq-2fenjie-02-04}).
\end{proof}

By utilizing the Schwarz inequality, it can be demonstrated that Lemma \ref{2014082301} implies
$$
I_r\leq C_{\epsilon}\lambda^{4\epsilon}\delta^{-2}\left\|\left(\sum_{\nu,j}|\mathcal{Q}_{\lambda,\delta}^{\nu,j}f|^2\right)^{1/2}\right\|_{L^4}^4
+C_{\epsilon,N}\lambda^{-N}\|f\|_{L^4}^{4}
$$
 and since $\epsilon$ can be chosen arbitrary small, this together with (\ref{2024082302}) proves (\ref{ineq-082102}).


Thus it is enough for us to  show that
\begin{equation}\label{000001}
\left\|\left(\sum_{\nu,j}\left|\mathcal{Q}_{\lambda,\delta,r}^{\nu,j}f\right|^2\right)^{1/2}\right\|_{L^4}\leq C_{\epsilon}\lambda^{\epsilon}\|f\|_{L^4}.
\end{equation}

We only establish the validity of (\ref{000001}) for the scenario where $\lambda^{1/4}\leq 2^r\leq \lambda^{1/2-\epsilon}$, as the proof for the case $1\leq 2^r\leq \lambda^{1/4}$ is similar. We perform a decomposition of each sector $S^{\nu}=\supp_{\xi}\chi_{\nu}(\xi)$ that remains independent of $(x,t)$. Let $\rho$ be defined as mentioned above and

$$
\beta_{\lambda,r}^{\kappa}(\xi)=\rho\left(\lambda^{-1}2^{r}\langle \xi,\xi_{\nu}\rangle-\kappa\right)
$$
and define $Q_{\lambda,r}^{\nu,\kappa}$ as
$$
\widehat{Q_{\lambda,r}^{\nu,\kappa}f}=\chi_{S^{\nu}}(\xi)\beta_{\lambda,r}^{\kappa}(\xi)\widehat{f}(\xi).
$$

We will utilize a square function estimation proposed by  C$\acute{o}rdoba$ \cite{1982-cordoda}, which states,
\begin{equation}\label{inequ-coba001}
\left\|\left(\sum_{\nu,\kappa}|Q_{\lambda,r}^{\nu,\kappa}f|^2\right)^{\frac12}\right\|_{L^4(\mathbb{R}^2)}
\leq C\left(\log\lambda\right)^{c}\|f\|_{L^4(\mathbb{R}^2)}.
\end{equation}
Here, $C$ and $c$ are positive constants.
Our objective would be achieved if we can demonstrate
\begin{equation}\label{ineq-081901}
\left\|\left(\sum_{\nu,j}|\mathcal{Q}_{\lambda,\delta,r}^{\nu,j}f|^2\right)^{\frac12}\right\|_{L^4(\mathbb{R}^3)}
\leq C\lambda^{\epsilon}\left\|\left(\sum_{\nu,\kappa}|Q_{\lambda,r}^{\nu,\kappa}f|^2\right)^{\frac12}\right\|_{L^4(\mathbb{R}^3)}.
\end{equation}

To this end, set
$$
\mathcal{Q}_{\lambda,\delta,r}^{\nu,j}f=\sum_{\kappa}\mathcal{Q}_{\lambda,\delta,r}^{\nu,j,\kappa}Q_{\lambda,r}^{\kappa}f,
$$
where
$$
\mathcal{Q}_{\lambda,\delta,r}^{\nu,j,\kappa}f(x,t)=\int e^{i\phi(x,t,\xi,\delta)}a_{\lambda,\delta,r}^{\nu,j,\kappa}
(\xi,t,\delta)\beta_{\lambda,r}^{\kappa}(\xi)\widehat{f}(\xi)d\xi.
$$

Also note that if we denote the kernel of $\mathcal{Q}_{\lambda,\delta,r}^{\nu,j,\kappa}$ as $K_{\lambda,\delta,r}^{\nu,j,\kappa}$, there exist integers $\kappa(\nu,j)$ such that for any $x,t,y,$ and $\delta$, the inequality
\begin{equation}\label{ineq-081902001}
K_{\lambda,\delta,r}^{\nu,j,\kappa}(x,t,y,\delta)=0
\end{equation} holds true when $|\kappa-\kappa(\nu,j)|\geq C_0$, where $C_0$ is a uniform constant. Furthermore, by employing an integration by parts argument, it can be shown that the kernels satisfy
\begin{eqnarray}\label{ineq-082101}
|K_{\lambda,\delta,r}^{\nu,j,\kappa}(x,t,y,\delta)|&\leq& C_N\frac{2^{-r}\lambda}{(1+2^{-r}\lambda|\langle \tilde{q}'_{\xi}(\xi_{\nu},t,\delta)+x-y,\xi_{\nu}\rangle|)^N}\nonumber\\
&\times&\frac{\lambda^{1/2}}{\left(1+\lambda^{1/2}|\langle \tilde{q}_{\xi}'(\xi_{\nu},t,\delta)+x-y,\xi_{\nu}^{\bot} \rangle|\right)^{N}}
\end{eqnarray}
where $\tilde{q}(\xi,t,\delta)=E(\xi)+t\delta q(\xi)+\delta^2(\xi,t,\delta)$ and the unite vector $\xi_{\nu}^{\bot}$ is orthogonal to $\xi_{\nu}$.
In particular, we see that
\begin{equation} \label{inequ-081903}
\int \left|K_{\lambda,\delta,r}^{\nu,j,\kappa}(x,t,y)\right|dy\leq C.
\end{equation}

Using (\ref{ineq-081902001})  and (\ref{inequ-081903}), the  square of the left-hand side of
(\ref{ineq-081901})  is bounded by
\begin{eqnarray*}
&&\sup_{\|g\|_{L^2}=1}\left|\sum_{\nu,j,\kappa}\int_{\mathbb{R}^3}|\mathcal{Q}_{\lambda,\delta,r}^{\nu,j,\kappa}Q_{\lambda,r}^{\nu,\kappa}f(x,t)|^2
g(x,t)dxdt\right|\\
&&\,\,\,\,\leq C_{\epsilon}\lambda^{2\epsilon}\sup_{\|g\|_{L^2}=1}\int_{\mathbb{R}^2}\sum_{\nu,\kappa}|Q_{\lambda,r}^{\nu,\kappa}f|^2\\
&&\,\,\,\,\,\,\,\,\,\,\,\,\,\,\,\,\,\,\,\,\,\,\,\,\,\,\times\sup_{v}\left\{\int_{\mathbb{R}^3}\sup_{j,\kappa}|K_{\lambda,\delta,r}^{\nu,j,\kappa}(x,t,y)||g(x,t)|dxdt\right\}dy.
\end{eqnarray*}
Now the proof of (\ref{ineq-081901}) is complete once we have established the maximal inequality
$$
\left(\int_{\mathbb{R}^2}\sup_{v}\left|\int_{\mathbb{R}^3}\sup_{j,\kappa}|K_{\lambda,\delta,r}^{\nu,j,\kappa}(x,t,y)||g(x,t)|dxdt\right|^2dy\right)^{1/2}\leq C_{\epsilon}\lambda^{\epsilon}\|g\|_{L^2(\mathbb{R}^3)},
$$
but this is a consequence of (\ref{ineq-082101}) and Theorem 5.3 in \cite{1993-MSS-loacal-smoothing}.

\textbf{Declarations}\\
\textbf{Funding}:
The research was supported by the
Hebei Province Provincial Universities Basic Research Project Funding (Grant Nos. ZQK202305)\\
\textbf{Availability of data and material}\\
Not applicable\\
\textbf{Code availability}\\
Not applicable\\

\end{document}